\definecolor{labelkey}{rgb}{0,0,1}
\definecolor{Red}{rgb}{0.7,0,0.1}
\definecolor{Green}{rgb}{0,0.7,0}
\numberwithin{equation}{section}
\newtheorem{Thm}{Theorem}[subsection]
\newtheorem{Lem}[Thm]{Lemma}
\newtheorem{Prop}[Thm]{Proposition}
\newtheorem{Cor}[Thm]{Corollary}
\newtheorem{Def}[Thm]{Definition}
\newtheorem{Rmk}[Thm]{Remark}
\newtheorem*{Thm*}{Theorem}
\newcommand{\ZZ}{\mathbb{Z}}
\newcommand{\RR}{\mathbb{R}}
\newcommand{\PP}{\mathbb{P}}
\newcommand{\cU}{\mathcal{U}}
\DeclareMathOperator{\diag}{diag}
\DeclareMathOperator{\Id}{Id}
\newcommand{\no}[2]{\lVert#2\rVert_{#1}}
\newcommand{\Abs}[2]{\lvert#2\rvert_{#1}}
\newcommand{\goesto}{\rightarrow}
\newcommand{\smod}{\setminus}
\newcommand{\al}{\alpha}
\newcommand{\be}{\beta}
\newcommand{\de}{\delta}
\newcommand{\De}{\Delta}
\newcommand{\eps}{\epsilon}
\newcommand{\si}{\sigma}
\newcommand{\tht}{\theta}
\newcommand{\Om}{\Omega}
\newcommand{\bdy}{\partial}
\newcommand{\lp}{\left(}
\newcommand{\rp}{\right)}
\newcommand{\lpp}{(\!(}
\newcommand{\rpp}{)\!)}
\newcommand{\rt}{\tilde{r}}
\newcommand{\tv}{\tilde{v}}
\newcommand{\tf}{\tilde{f}}
\newcommand{\tq}{\tilde{q}}
\newcommand{\Gr}{\mathfrak{G}}
\newcommand{\cD}{\mathcal{D}}
\newcommand{\cUt}{\widetilde{\mathcal{U}}}
\newcommand{\cM}{\mathcal{M}}
 \title[Determining Modes, State Reconstruction, and Intertwinement: Part 1]{Determining Modes, State Reconstruction, and Intertwinement: A Synchronization Framework}
 \author{Elizabeth Carlson, Aseel Farhat, Vincent R. Martinez$^\dagger$, Collin Victor}
\date{December 9, 2025\\
\indent $^\dagger$Corresponding author}
\begin{document}

\begin{abstract}
This article studies the interrelation between the determining modes property in the two-dimensional (2D) Navier-Stokes equations (NSE) of incompressible fluids and the reconstruction property of two filtering algorithms for continuous data assimilation applied to the 2D NSE. These two properties are realized as manifestations of a more general phenomenon of \textit{self-synchronous intertwinement}. It is shown that this concept is a logically stronger form of asymptotic enslavement, as characterized by the existence of finitely many determining modes in the 2D NSE. In particular, this stronger form is shown to imply convergence of the direct-replacement filter and the nudging filter from continuous data assimilation (CDA), and then subsequently invoked to show that convergence in these filters implies that the 2D NSE possesses finitely many determining modes. The main achievement of this article is to therefore to develop a new conceptual framework, that of self-synchronous intertwinement, through which the precise inter-relationship between the determining modes property and synchronization phenomenon in these CDA filters is rigorously established and made decisively clear. The theoretical results are then complemented by numerical experiments that confirm the conclusions of the theorems. 
\end{abstract}

\maketitle

\vspace{1em}

{\noindent \small {\it {\bf Keywords: intertwinement, synchronization, determining modes, continuous data assimilation, nudging filter, direct-replacement filter, Navier-Stokes equations}
  } \\
  {\it {\bf MSC 2010 Classifications:} 35Q30, 35B30, 37L15, 76B75, 76D05, 93B52} 
  }

%\tableofcontents

\section{Introduction}\label{sect:intro}
In a seminal paper by C. Foias and G. Prodi \cite{FoiasProdi1967}, it was shown that the two-dimensional (2D) externally forced, Navier-Stokes equations (NSE) for incompressible fluids, given by the system
    \begin{align}\label{eq:nse}
        \bdy_tu+u\cdotp\nabla u=-\nabla p+\nu\De u+f,\quad \nabla\cdotp u=0,
    \end{align}
asymptotically possesses a large, but finite number of degrees of freedom, a property that is expected to hold true for turbulent flows on the basis of physical principles. In particular, they introduce the notion of \textit{determining modes} and subsequently show that the 2D NSE possesess finitely many such modes. This notion characterizes the property that knowledge of the asymptotic behavior of a distinguished set of constitutive modes of the solution suffice to describe the asymptotic behavior of all 
of its modes. In this way, one therefore captures the finite-dimensionality of the dynamics of the system. It is remarkable that this fact was established at the advent of the study of chaotic dynamical systems, just a few years after E. Lorenz introduced his three-mode truncation of the Boussinesq approximation of the full Navier-Stokes equations for weather prediction \cite{Lo1963}. It had also provided strong evidence for the finite dimensionality of the global attractor of the 2D incompressible NSE, which was eventually resolved in the two decades following \cite{FoiasProdi1967} in \cite{FoiasTemam1979, Ladyzhenskaya1985, ConstantinFoiasTemam1988}.

This result has since been realized as a cornerstone in the study of the dynamics of the 2D NSE and many other dissipative partial differential equations (PDEs), deterministic and stochastic. It has either motivated or found intimate connection to the study of finite dimensionality of global attractors \cite{FoiasTemam1979, Ladyzhenskaya1985, ConstantinFoiasTemam1988}, estimates on dissipative length scales of turbulent flows \cite{FoiasManleyTemamTreve1983, FoiasTemam1985, ConstantinFoiasManleyTemam1985}, the study of dimension-reduction, approximate inertial manifolds, and downscaling in dissipative systems \cite{FoiasTiti1991, OlsonTiti2003, AzouaniOlsonTiti2014, KalantarovKostiankoZelik2023}, and the existence of determining forms \cite{FoiasJollyKravchenkoTiti2012, FoiasJollyKravchenkoTiti2014, JollySadigovTiti2015, JollySadigovTiti2017, FoiasJollyLithioTiti2017, JollyMartinezSadigovTiti2018}, a notion weaker than that of an inertial form, in which the dynamics of the underlying PDE system is reduced to the study a bona fide ODE system, albeit an infinite-dimensional one. Determining modes have also found a wide range of applications in the domains of data assimilation \cite{BlomkerLawStuartZygalakis2013}, numerical approximation of PDEs \cite{MondainiTiti2018, IbdahMondainiTiti2019}, parameter estimation \cite{CarlsonHudsonLarios2020, CarlsonHudsonLariosMartinezNgWhitehead2021, PachevWhiteheadMcQuarrie2022, Martinez2022, BiswasHudson2023, Martinez2024, FarhatLariosMartinezWhitehead2024, AlbanezBenvenutti2024}, and perhaps most notably, the problem of unique ergodicity for stochastically forced systems \cite{EMattinglySinai2001, KuksinShirikyan2012, Debussche2013}, where it is often referred to as the \textit{Foias-Prodi property}. In the context of the 2D NSE, the concept of determining modes has also been extended to accommodate the concepts of determining nodes, volume elements, and, more generally, determining functionals \cite{FoiasTemam1984, CockburnJonesTiti1997, JonesTiti1992a, JonesTiti1992b, ChueshovDuanSchmalfuss2003}, which have all enjoyed a richness in application 
\cite{FlandoliLanga1999, Langa2003, HaydenOlsonTiti2011, FarhatJollyTiti2015, AlbanezLopesTiti2016,  FoiasMondainiTiti2016, JollyMartinezTiti2017, AltafTitiGebraelKnioZhaoMcCabe2017, FarhatJohnstonJollyTiti2018, LeoniMazzinoBiferale18, DiLeoniClarkMazzinoBiferale2018, OljacaBrockerKuna2018, BiswasFoiasMondainiTiti2018, ZerfasRebholzSchneierIliescu2019, IbdahMondainiTiti2019, LariosRebholzZerfas2019, CelikOlsonTiti2019, Garcia-ArchillaNovo2020, BiswasBradshawJolly2021, CelikOlson2023, CaoGiorginiJollyPakzad2022, HammoudTitiHoteitKnio2022, JollyPakzad2023, Garcia-ArchillaLiNovoRebholz2024, KoronakiEvangelouMartin-LinaresTitiKevrekidis2024}.

In some of these applications, the existence of finitely many determining modes is not invoked explicitly, but rather appears implicitly through the analysis. Although it is by now well-known how the Foias-Prodi mechanism appears in these applications, as of yet, there has been no result that {demonstrates a form of equivalence between the} Foias-Prodi property of the Navier-Stokes equations to the particular application in which its mechanism appears. Indeed, in the context of either data assimilation or unique ergodicity, such a relation {exists in a moral sense through observing} how closely the analysis {in the one context} matches {the analysis} carried out in \cite{FoiasProdi1967}, or in anticipating how solutions should behave as a consequence of the property, thereby informing the approach that is developed for the application itself. For example, in proving that the continuous data assimilation algorithms of E. Olson and E. Titi \cite{OlsonTiti2003} or A. Azouani, E. Olson, and E. Titi \cite{AzouaniOlsonTiti2014} are capable of asymptotically recovering the reference state variable from partial observations of the system, i.e., achieving \textit{synchronization} of the data assimilated approximating state variable with the reference state variable, one observes strong similarities to the argument of C. Foias and G. Prodi in \cite{FoiasProdi1967}. {One notable contribution in establishing a more rigorous link between the existence of the Foias-Prodi mechanism in a given system and the reconstruction property of certain continuous data assimilation (CDA) algorithms is found in \cite{BiswasHudsonLariosPei17}; there it is shown that the nudging algorithm can be leveraged to demonstrate that the underlying system possesses the determining modes property. To our knowledge, \cite{BiswasHudsonLariosPei17} is the first to establish a rigorous connection between the determining modes property and the ability of a CDA algorithm to synchronize.}  However, whether or not {the \textit{converse} is true or if such relations hold for other CDA algorithms other than the nudging algorithm} has remained unaddressed. {In this article, we develop a general framework in which we are able not only to formalize, but also to prove that the corresponding converse statement holds for \textit{both} CDA algorithms introduced in \cite{OlsonTiti2003} and \cite{AzouaniOlsonTiti2014}. As a byproduct of our analysis, our framework and results effectively subsume  \cite[Theorem 3.10, 3.12]{BiswasHudsonLariosPei17} corresponding to determining modes in the context of the 2D NSE.}

{On the other hand, in the context of unique ergodicity of degenerately forced stochastic} systems, asymptotic couplings or exact finite-dimensional couplings have been constructed to deduce uniqueness of invariant probability measures for its Markovian dynamics, and even deduce mixing rates. Due to the degeneracy of the noise, the design of these couplings are restricted to enforcing a form of contractivity only on a finite, but potentially large, dimensional subspace. However, for systems possessing the Foias-Prodi property, contractivity on a finite-dimensional subspace subsequently activates a nonlinear \textit{deterministic} mechanism that enforces contractivity on the complementary space. Couplings are then designed by incorporating system controls in such a way that exploit this intrinsic property in a convenient way \cite{DebusscheOdasso2005, Odasso2006, HairerMattingly2006, KuksinShirikyan2000, Shirikyan2008, HairerMattinglyScheutzow2011,  GlattHoltzMattinglyRichards2017, ButkovskyKulikScheutzow2019, GlattHoltzMartinezRichards2021, GlattHoltzMartinezNguyen2022, FerrarioZanella2023, CarigiBrockerKuna2023, BrzezniakFerrarioZanella2023, Nguyen2023}. {The framework introduced in the present article ultimately takes its inspiration in the design of such couplings.}

{We establish here} a precise and rigorous relation between the existence of finitely many determining modes and the ability of the Olson-Titi (OT) and Azouani-Olson-Titi (AOT) continuous data assimilation (CDA) algorithms to converge in the paradigmatic context of the 2D Navier-Stokes {system.} {These} data assimilation algorithms are in fact closely related to couplings that have been designed in the study of the problem of unique ergodicity for many stochastically-driven equations in hydrodynamics.  {Indeed, the (AOT) algorithm} implements a control that has been fruitfully exploited in studying the problem of unique ergodicity for stochastically forced systems \cite{KuksinShirikyan2012, GlattHoltzMattinglyRichards2017, ButkovskyKulikScheutzow2019, GlattHoltzMartinezRichards2021, FerrarioZanella2023, BrzezniakFerrarioZanella2023, Nguyen2023}, {while the (OT) algorithm has effectively been exploited for these purposes in, for instance, \cite{DebusscheOdasso2005, Odasso2006}.}

Motivated by {the ideas behind such coupling designs, we introduce} a new, but closely related concept of \textit{intertwinement}, and show that in fact a stronger property holds, which is the existence of what we call \textit{self-synchronous intertwinements}. From this property, one can deduce both the finite determining modes property and reconstruction property of two particular data assimilation algorithms as direct consequences. By doing so, we develop a unifying theory in which determining modes and synchronization of continuous data assimilation can be studied simultaneously. From this point of view, we may consider this work to be a rigorous expansion to the context of nonlinear PDEs on the seminal works of L.M. Pecora and T.K. Carroll \cite{PecoraCarroll1990}, in which the phenomenon of \textit{complete synchronization} was discovered in the context of the Lorenz 63 via a version of what we refer to as the ``direct-replacement algorithm" (see \eqref{def:sync:q}), and on G.S. Duane, J.J. Tribbia, and J.B. Weiss \cite{DuaneTribbiaWeiss2006}, in which what we refer to as the ``nudging algorithm" (see \eqref{def:nudge:v}) was viewed from the perspective of synchronization for the purposes of continuous data assimilation \cite{DuaneTribbiaWeiss2006}.

Ultimately, due to the close relation of these CDA algorithms to the couplings that have been constructed in the {stochastic processes} literature, it is our hope that these ideas will also eventually find application in the study of stochastically forced systems. {Nevertheless, one immediate application of our results is the possibility to probe the number of determining modes via continuous data assimilation. The first systematic computational study to estimate the dimension of the minimal subspace of determining modes were carried out by E. Olson and E.S. Titi in \cite{OlsonTiti2008}. One of the main consequences of \cref{thm:main} is that essentially any ``CDA algorithm" (as defined by \cref{def:cda}) that derives from a self-synchronous intertwinement can be used to provide an upper bound estimate on this minimal dimension. Whether the upper bound provided is sharp remains a compelling issue of investigation. Lastly, many works have previously studied the problem of parameter estimation in dynamical systems from the perspective of synchronization in \textit{finite-dimensional settings} \cite{CrevelingJeanneAbarbanel2008, AbarbanelCrevelingFarsianKostuk2009, QuinnBryantCrevelingKleinAbarbanel2009, AbarbanelShirmanBreenKadakiaReyArmstrongMargoliash2017, CarlsonHudsonLariosMartinezNgWhitehead2021}. It is our hope that the rigorous development of this perspective in the context of an infinite-dimensional system will prove to be consequential for its further development in the context of parameter estimation in PDEs.}

In the remainder of the introduction, we hold an informal discussion of the central ideas {and defer making} rigorous statements for later on.

\subsection{Intertwinements: The Heart of the Matter}

The primary motivation of this paper is to establish a direct connection between the existence of determining modes for the 2D NSE and the convergence of two particular filtering algorithms for continuous data assimilation. As previously mentioned, it is a folklore that the two are intimately related due to the similarity in the proofs of these respective properties. Nevertheless, answers to basic questions regarding whether one property implies the other have remained largely unaddressed {with the sole exception of \cite{BiswasHudsonLariosPei17}}. This paper is an attempt to give {definitive} clarity to this issue.

The two algorithms of interest are the \textit{direct-replacement filter} and the \textit{nudging filter}. Consider a solution $u$ to \eqref{eq:nse}. Let $Qu=Q_Nu$, where $Q_N=I-P_N$. Given $N>0$, the direct-replacement filter is defined as the function $v$, which satisfies:
    \begin{align}\label{def:sync:q}
        \begin{split}
        v&=P_Nu+Q_Nv,\\
        \bdy_tQ_Nv+Q_N[(P_Nu+Q_Nv)\cdotp\nabla(P_Nu+Q_Nv)]&=-Q_N\nabla r+\nu \De Q_Nv+Q_Nf,\quad \nabla\cdotp Q_Nv=0
        \\
        Q_Nv(0)&=Q_Nv_0,
        \end{split}
    \end{align}
where $r$ denotes the scalar pressure field. Note that $q_0$ does not necessarily equal $Qu_0$. On the other hand, the nudging filter is defined as the function $\tv$, which satisfies
    \begin{align}\label{def:nudge:v}
        \bdy_t\tv+(\tv\cdotp\nabla)\tv=-\nabla\rt+\nu\De\tv+f-\mu P_N\tv+\mu P_Nu,\quad\nabla\cdotp \tv=0,\quad \tv(0)=\tv_0,
    \end{align}
where $\rt$ denotes the scalar pressure field, and not equal to $r$ from \eqref{def:sync:q} in general. Similarly, $\tv_0$ does not necessarily satisfy $P_N\tv_0=P_Nu_0$, and may be taken to be arbitrary. In \eqref{def:sync:q} and \eqref{def:nudge:v} are proposed as solutions to the problem of recovering the unobserved component, $Qu$, of the underlying 2D NSE system. ``Convergence" of these algorithms refer to the property that $v$ and $\tv$ \textit{synchronize} with $u$. Informally, this means that there exist appropriate choices of $N$ and $\mu$ such that
    \begin{align}\label{eq:synchronized}
        \lim_{t\goesto\infty}|v(t)-u(t)|=\lim_{t\goesto\infty}|\tv(t)-u(t)|=0.
    \end{align}
Note that in \eqref{eq:synchronized} (and below), the norm $|\cdotp|$ may be interpreted as the $L^2$-norm; this will be defined more precisely in \cref{sect:preliminaries}. Morally speaking, synchronization of \eqref{def:sync:q} and \eqref{def:nudge:v} with \eqref{eq:nse} is a concrete manifestation of the determining modes property since it not only asserts that knowledge of a sufficiently many modes, $p$, is enough to asymptotically determine the unobserved modes, $Qu$, but moreover furnishes an explicit approximation of the unobserved modes, $q\approx Qu\approx Q_N\tv$. It is therefore natural to expect that this reconstruction property is directly relatable to the existence of finitely many determining modes for \eqref{eq:nse}. {In what follows, we present an argument that uncovers a path for establishing this relation.}

Suppose that either one of the algorithms, (OT) or (AOT), above possesses the reconstruction property \eqref{eq:synchronized}. To show that this property implies that \eqref{eq:nse} has the determining modes property, i.e., has finitely many determining modes, one must establish, for any pair of solutions $u_1,u_2$ corresponding to external forces $f_1,f_2$, the existence of a cut-off, $N$, with the following property:
    \begin{align}\label{eq:det:modes}
        \lim_{t\goesto\infty}|P_Nu_1(t)-P_Nu_2(t)|=\lim_{t\goesto\infty}|f_1(t)-f_2(t)|=0\quad\text{implies}\quad \lim_{t\goesto\infty}|u_1(t)-u_2(t)|=0.
    \end{align}
Let $(v_1,v_2)$ be a pair satisfying either \eqref{def:sync:q} or \eqref{def:nudge:v} respectively corresponding to forces $(f_1,f_2)$. Then, by assumption, for $N$ sufficiently large, $v_j$ synchronizes with $u_j$. By the triangle inequality, one has
    \begin{align}\label{eq:control:argument}
        |u_1-u_2|\leq|u_1-v_1|+|v_1-v_2|+|v_2-u_2|,
    \end{align}
so that by the reconstruction property, one obtains
    \begin{align}\notag
        \lim_{t\goesto\infty}|u_1(t)-u_2(t)|&\leq\limsup_{t\goesto\infty}|v_1(t)-v_2(t)|.
    \end{align}
On the other hand
    \begin{align}\label{eq:v1v2:triangle}
        |P_Nv_1-P_Nv_2|\leq |P_Nv_1-P_Nu_1|+|P_Nu_1-P_Nu_2|+|P_Nu_2-P_Nv_2|,
    \end{align}
Then by a second application of the reconstruction property, which guarantees $|P_Nv_j-P_Nu_j|\goesto0$, in addition to the determining modes hypothesis \eqref{eq:det:modes}, namely that $|P_Nu_1-P_Nu_2|\goesto0$, one deduces from \eqref{eq:v1v2:triangle} that
    \begin{align}\notag
        \lim_{t\goesto\infty}|P_Nv_1(t)-P_Nv_2(t)|=0.
    \end{align}
Therefore, if one can guarantee that \eqref{def:sync:q} or \eqref{def:nudge:v} themselves possesses a \textit{determining modes-type property} that allows one to deduce that $|v_1(t)-v_2(t)|\goesto0$ as $t\goesto\infty$ from the fact that $|P_Nv_1(t)-P_Nv_2(t)|\goesto0$, as $t\goesto\infty$, then one may conclude from the argument above that $|u_1(t)-u_2(t)|\goesto0$.

A natural temptation is to attempt to deduce the determining modes property of \eqref{def:sync:q} or \eqref{def:nudge:v} \textit{as a consequence} of the determining modes property for \eqref{eq:nse} by a careful selection of $f_1, f_2$. While this indeed is the main mechanism at play, it is not rigorously possible to do so since the choice one must make requires $f_1, f_2$ to be \textit{state-dependent}. For instance, one may view a pair of solutions $(\tv_1,\tv_2)$ to \eqref{def:nudge:v} corresponding to forces $(\tf_1,\tf_2)$, as a pair of solutions to \eqref{eq:nse} corresponding to forces $(f_1,f_2)=(\tf_1-\mu P_N\tv_1+\mu P_Nu_1,\tf_2-\mu P_N\tv_2+\mu P_Nu_2)$. Thus, $f_j=f_j(\tv_j)$ and the {corresponding equation is no longer the same as \eqref{eq:nse}, but rather a \textit{perturbation} of \eqref{eq:nse}.} The obstruction is now clear and the problem reduces to addressing the following question: What class of perturbations to \eqref{eq:nse} allow one to formulate and establish a determining modes-type property? {The main} purpose of the concept of \textit{intertwinement} {is to formalize the class of such perturbations in such a way that simultaneously subsumes the convergence properties of existing CDA algorithms}.

The key idea is to view the determining modes property of \eqref{eq:nse} as a property of the \textit{augmented system} in which the pair $(u_1,u_2)$ simultaneously satisfies \eqref{eq:nse} corresponding to forcing $(f_1,f_2)$, respectively:
    \begin{align}\label{eq:independent:coupling}
        \begin{split}
        \bdy_tu_1+(u_1\cdotp\nabla) u_1&=-\nabla p_1+\nu\De u_1+f_1,\quad \nabla\cdotp u_1=0
         \\
        \bdy_tu_2+(u_2\cdotp\nabla) u_2&=-\nabla p_2+\nu\De u_2+f_2,\quad \nabla\cdotp u_2=0.
        \end{split}
    \end{align}
In the context of stochastic forcing, $f_j\sim dW_j$, such an augmented system is at once recognized as the \textit{independent coupling}. On the other hand, \eqref{def:sync:q} can be viewed as a coupled system in $(u,v)$ via:
    \begin{align}\label{eq:exact:coupling}
        \begin{split}
        \bdy_tu+(u\cdotp\nabla)u&=-\nabla p+\nu \De u+f,\quad \hspace{108.5pt}\nabla\cdotp u=0
        \\
        \bdy_tv+(v\cdotp\nabla)v&=-\nabla q+\nu \De v+f+{P_N((v\cdotp\nabla)v)}-{P_N((u\cdotp\nabla)u)},\quad \nabla\cdotp v=0.
        \end{split}
    \end{align}
In the context of stochastic forcing, \eqref{eq:exact:coupling} is analogous to an \textit{finite-dimensionally-exact coupling}, since the control term, ${P_N((v\cdotp\nabla)v)-P_N((u\cdotp\nabla)u)}$ serves to enforce $P_Nv=P_Nu$ exactly. Lastly, \eqref{def:nudge:v} can also be viewed as a coupled system in $(u,\tv)$ as:
    \begin{align}\label{eq:nudge:coupling}
        \begin{split}
        \bdy_tu+(u\cdotp\nabla)u&=-\nabla p+\nu \De u+f,\quad \hspace{74.5pt}\nabla\cdotp u=0
        \\
        \bdy_t\tv+(\tv\cdotp\nabla)\tv&=-\nabla q+\nu \De \tv+f-\mu P_N\tv+\mu P_Nu,\quad \nabla\cdotp v=0.
        \end{split}
    \end{align}
In the context of stochastic forcing, the term $f-P_Nv+P_Nu$ may often be viewed as a Girsanov shift of underlying stochastic forcing $f$, and \eqref{eq:nudge:coupling} has been exploited as an \textit{asymptotic coupling}. 

On the other hand, {taking stock of the heuristic argument made above, if we instead take} two copies,  $(v_1,v_2)$, of the second component of \eqref{eq:exact:coupling}, and two copies, $(\tv_1,\tv_2)$, of the second component of \eqref{eq:nudge:coupling}, respectively corresponding to the pair of forces $(f_1,f_2)$, one has:
    \begin{align}\label{eq:sync:pair}
        \begin{split}
        \bdy_tv_1+(v_1\cdotp\nabla)v_1&=-\nabla q_1+\nu \De v_1+\tf_1+P_N(v_1\cdotp\nabla)v_1,\quad \nabla\cdotp v_1=0.
        \\
        \bdy_tv_2+(v_2\cdotp\nabla)v_2&=-\nabla q_2+\nu \De v_2+\tf_2+P_N(v_2\cdotp\nabla)v_2,\quad \nabla\cdotp v_2=0,
        \end{split}
    \end{align}
where $\tf_j=f_j-P_N((u_j\cdotp\nabla)u_j)$, and 
 \begin{align}\label{eq:nudge:pair}
        \begin{split}
        \bdy_t\tv_1+(\tv_1\cdotp\nabla)\tv_1&=-\nabla \tq_1+\nu \De \tv_1+\tf_1-\mu P_N\tv_1,\quad \nabla\cdotp \tv_1=0,
        \\
        \bdy_t\tv_2+(\tv_2\cdotp\nabla)\tv_2&=-\nabla q_2+\nu \De \tv_2+\tf_2-\mu P_N\tv_2,\quad \nabla\cdotp \tv_2=0,
        \end{split}
    \end{align}
where ${\tf_j}=f_j+\mu P_Nu_j$.

In each of the above {instances, \eqref{eq:exact:coupling}--\eqref{eq:nudge:pair},} a common structure is readily identified:
    \begin{align}\label{eq:gen:pair}
       \begin{split}
        \bdy_tv_1+Av_1+B(v_1,v_1)&=f_1+m_{11}F(v_1)+m_{12}F(v_2)
        \\
        \bdy_tv_2+Av_2+B(v_2,v_2)&=f_2+m_{21}F(v_1)+m_{22}F(v_2),
        \end{split} %\bdy_tV+AV+B(V)&=f+MF(V),\quad 
    \end{align}
where $M=(m_{ij})_{ij}\in \RR^{2\times 2}$ and $F$ is some operator. Indeed, in \eqref{eq:exact:coupling}, $M$ is non-zero only in the second row, and $F(v)=P_N\PP(v\cdotp\nabla v)$, where $\PP$ denotes projection onto divergence-free vector fields (see \cref{sect:preliminaries}); in \eqref{eq:nudge:coupling}, $M$ is non-zero only in the second row as well, and $F(v)=P_Nv$; in \eqref{eq:sync:pair}, $M=\Id_{2\times2}$ is the identity matrix, and $F(v)=P_N\PP(v\cdotp\nabla)v$; in \eqref{eq:nudge:pair}, $M=\Id_{2\times2}$, and $F(v)=P_Nv$. Note that if we let $V=(v_1,v_2)$, $f=(f_1,f_2)$, and $F(V)=(F(v_1),F(v_2))$, then we can  rewrite \eqref{eq:gen:pair} more concisely in the following vector form: 
    \begin{align}\label{eq:gen:pair:vector}
       \begin{split}
        \bdy_tV+AV+B(V)&=f+MF(V).
        \end{split}
    \end{align}
We refer to \eqref{eq:gen:pair} as an \textit{intertwinement} of \eqref{eq:nse} {with $F$ denoting the \textit{intertwining function} and $M$ the \textit{intertwining matrix}.}

In this paper, we formulate a definition similar to the determining modes property, but in the generality of the {coupled system} \eqref{eq:gen:pair} (see \cref{def:intertwined}, \cref{def:fdss}). {Indeed, we observe that the property that $|u-v|\goesto0$ or $|u-\tv|\goesto0$ or $|v_1-v_2|\goesto0$ or $|\tv_1-\tv_2|\goesto0$ as $t\goesto\infty$ can all be viewed as the ability of the joint process defined by $U=(u,v)$, $(u,\tv)$, $(v_1,v_2)$, or $(\tv_1,\tv_2)$ to \textit{asymptotically self-synchronize} in the sense that $|\pi_1U-\pi_2U|\goesto0$ as $t\goesto\infty$, where $\pi_j$ denotes the projection on to coordinate $j=1,2$. In particular, the phenomenon that \eqref{eq:exact:coupling} or \eqref{eq:nudge:coupling} can be used to solve the state reconstruction problem in continuous data assimilation is realized as \textit{a special case} of self-synchronization within the intertwinement framework. %That is, particular choices of $M$ and $F$, of the ability of the joint process to self-synchronize; 
Conversely, the ``reverse implication," that state reconstruction implies the existence of finitely many determining modes for the 2D NSE, can also be realized as a consequence of the self-synchronization property of the joint process. From this point of view, the ability of CDA algorithms to solve the state reconstruction is in some sense \textit{equivalent} to the finite determining modes property, as long as one views the finite determining modes property as a type of synchronization phenomenon. Within our framework, we are therefore able to achieve a conceptual clarity between the determining modes property of the 2D NSE and the ability of CDA algorithms to solve the state reconstruction problem that has hitherto been missing from the literature surrounding these ideas. 

To demonstrate the fruitfulness of our general definition, we propose} two classes of examples of $M$ and $F$ for which \eqref{eq:gen:pair} satisfies a self-synchronization-type property (see \cref{sect:examples}). These examples realize the direct-replacement filter \eqref{def:sync:q} and the nudging filter \eqref{def:nudge:v} as special cases. The rigorous analysis surrounding these examples are developed in a companion paper \cite{CarlsonFarhatMartinezVictor2025b}. We emphasize that the present article focuses on introducing the framework and establishing general results within this framework.

Ultimately, we view \cref{def:fdss} as a sublimation of the concept of the determining modes property {(see \cref{def:dm} and \cref{thm:Foias:Prodi})}, originally formulated for a given system, {e.g., \eqref{eq:nse},} into a property formulated for a {``lifting"} of the system (into a product space), which is induced by coupling the system to itself in a particular way, {e.g., \eqref{eq:gen:pair}.} In the {``lifted phase space,"} the determining modes property therefore becomes a statement about the {lifted system's} ability to asymptotically self-{synchronize. The concept of intertwinement and self-synchronization is} formally introduced in \cref{sect:intertwine}. {We develop a connection between intertwinements and continuous data assimilation in \cref{sect:cda}, where we view state reconstruction as a particular type of self-synchronization phenomenon. We develop several general results that elucidate how intertwinement, determining modes, and the state reconstruction property of certain CDA filters are related in \cref{sect:cda}. The main result in this regard is \cref{thm:main}, where we establish the implication that the state reconstruction property of CDA algorithms implies the existence of finitely many determining modes for the Navier-Stokes process \textit{as a consequence} of the self-synchronization property of intertwinements. We conclude \cref{sect:intertwine} by introducing several examples of intertwinements in \cref{sect:examples}. These examples demonstrate that the intertwinement framework is a legitimate generalization of the classical determining modes theory and the recent theories developed for studying state reconstruction in various CDA algorithms.}

Before proceeding to these sections, we provide the relevant mathematical preliminaries in \cref{sect:preliminaries}. {The paper concludes in}  \cref{sect:numerical} with a series of computational results that corroborate the theoretical results {and showcase the generality of the framework}.

\section{Mathematical Preliminaries}\label{sect:preliminaries}

{We let $H$ denote the space of $L^2$ real-valued vector fields, which are $2\pi$-periodic in each direction, divergence-free, and mean-free over $\Om=[0,2\pi]$, in the sense of distribution}. We let $\PP$ denote the Leray projection. Observe that $\PP H=H$. {We let $V$ denote the subspace of $H$ endowed with the $V$ topology}. We make use of the following notation for the inner products and norms on $H$ and $V$, respectively:
    \begin{align}\label{def:H}
        \lp u, v\rp=\int_\Om u(x)\cdotp v(x)dx,\quad |u|^2=\lp u,u\rp,
    \end{align}
and
    \begin{align}\label{def:V}
        \lpp u,v\rpp=\sum_{j=1,2}\int_\Om \bdy_ju(x)\cdotp\bdy_jv(x)\ dx,\quad \lVert u\rVert^2=\lpp u, u\rpp.
    \end{align}
The dual spaces of $H, V$ will be denoted by $H^*, V^*$ respectively. Then we have the following continuous imbeddings
    \begin{align}\notag
        V\subset H\subset H^*\subset V^*.
    \end{align}
In particular, we have the Poincar\'e inequality
    \begin{align}\label{est:Poincare}
        |u|\leq \lVert u\rVert,
    \end{align}
for all $u\in V$. For each $1\leq p\leq\infty$, we will also make use of the Lebesgue spaces, $L^p$, which denote the space of $p$-absolutely integrable functions endowed with the following norm:
    \begin{align}\label{def:Lp}
        \Abs{p}{u}=\lp\int_\Om|u(x)|^p dx\rp^{1/p},
    \end{align}
with the usual modification when $p=\infty$. For convenience, we will abuse notation and use the same notation to denote the corresponding subspace of $p$-absolutely integrable functions over $\Om$, which are mean-free and $2\pi$-periodic in each direction. It will be convenient to abuse notation further and not distinguish between the elements of $L^p$ being scalar functions or vector fields. 

Lastly, we denote the Stokes operator by $A=-\PP\De$ and define, for each $n\geq0$, integer powers, $A^{n/2}$, of $A$ by
    \begin{align}\label{def:A}
        A^{n/2}u=\sum_{k\in\ZZ^2\smod\{(0,0)\}}|k|^n\hat{u}_kw_k,\quad w_k(x)=\exp(ik\cdotp x).
    \end{align}
Then the domain, $D(A^{n/2})$, of $A^{n/2}$ is a subspace of $H$ endowed with the topology induced by
    \begin{align}\label{def:Hn}
        \no{n}{u}=|A^{n/2}u|=\lp\sum_{k\in\ZZ^2}|k|^{2n}|\hat{u}_k|^2\rp^{1/2}.
    \end{align}
Observe that
    \begin{align}\notag
        |u|=\Abs{0}{u},\qquad \rVert u\rVert=\no{0}{u}=\Abs{0}{A^{1/2}u}.
    \end{align}
    
Our analysis will make use of the Ladyzhenska and Agmon, respectively, interpolation inequalities: there exist absolute constants $C_L, C_A>0$ such that
	\begin{align}\label{est:interpolation}
		\Abs{4}{u}^2\leq C_L\lVert u\rVert|u|,\qquad
		\Abs{\infty}{u}^2\leq C_A|Au||u|.
	\end{align}
Another useful interpolation inequality, is the following:
	\begin{align}\label{est:interpolation:CS}
		\lVert u\rVert^2\leq |Au||u|.
	\end{align}
We will also make use of the Bernstein inequality: let $P_N$ denote projection onto Fourier wavenumbers, $|k|\leq N$, where $N>0$ is a real number. Denote the complementary projection by
    \begin{align}\label{def:QN}
        Q_N:=I-P_N.
    \end{align}
Then for any integers $m\leq n$
	\begin{align}\label{est:Bernstein}
		\no{n}{P_Nu}\leq N^{n-m}\no{m}{P_Nu},\quad \no{m}{Q_Nu}\leq N^{m-n}\no{n}{Q_Nu}.
	\end{align}
Observe that we also have the following borderline Sobolev inequality
    \begin{align}\label{est:Sobolev}
        |P_Nu|_\infty\leq C_S(\ln N)^{1/2}\|P_Nu\|.
    \end{align}

Given $f\in L^\infty(0,\infty;H)$, the \textit{generalized Grashof number} is defined as
    \begin{align}\label{def:Grashof}
        \Gr:=\frac{\sup_{t\geq0}|f(t)|}{\nu^2}.
    \end{align}
If $f\in L^\infty(0,\infty; D(A^{n/2}))$, then for each integer $n\geq1$, we define the shape factors of $f$ by
    \begin{align}\label{def:shape}
        \si_n:=\frac{\sup_{t\geq0}|A^{n/2}f(t)|}{|f|}.
    \end{align}

We will rewrite \eqref{eq:nse} in its functional form:
    \begin{align}\label{eq:nse:ff}
        \frac{du}{dt}+\nu Au+B(u,u)=f,\quad u(0)=u_0,
    \end{align}
where 
    \begin{align}\label{def:B}
        B(u,v):=\PP((u\cdotp\nabla)v).
    \end{align}
We also have the well-known, skew-symmetric property of $\lp B(u,v),w\rp$:
	\begin{align}\label{eq:B:skew}
		\lp B(u,v),w\rp=-\lp B(u,w),v\rp,
	\end{align}
for $u,v,w\in V$, which immediately implies
    \[
        \lp B(u,v),v\rp=0.    
    \]
We will also make use of the identity 
    \begin{align}\label{eq:B:enstrophy}
        \lp B(u,u),Au\rp=0.
    \end{align}
Observe that $B:D(A)\times V\goesto H$ via 
    \begin{align}\label{est:B:ext:H}
        |B(u,v)|\leq C_A^{1/2}|Au|^{1/2}|u|^{1/2}\|v\|.
    \end{align}
whenever $u \in D(A)$ and $v\in V$. Moreover, $B$ is also continuous as a bilinear mapping $B:V\times V\goesto V'$ via
    \begin{align}\label{est:B:ext}
        |\lp B(u,v),w\rp|\leq C_L\lVert u\rVert^{1/2}|u|^{1/2}\lVert v\rVert\|w\|^{1/2}|w|^{1/2},
    \end{align}
where $u,v\in V$ and $w\in V$, and $C_L$ is the constant appearing in \eqref{est:interpolation}.  The Frech\'et derivative of $B$ will be denoted by $DB$. Recall that $DB$ is defined by 
	\begin{align}\label{def:DB}
		DB(u)v=B(u,v)+B(v,u).
	\end{align}
By \eqref{est:B:ext:H}, it follows that $DB: D(A)\goesto L(D(A),H)$, $u\mapsto DB(u)$, while \eqref{est:B:ext} implies $DB: V\goesto L(V,V')$, where $L(X,Y)$ denotes the space of bounded linear operators mapping $X$ to $Y$.

We recall the following classical global existence and uniqueness result.

\begin{Thm}\label{prop:nse:ball}
Given $f\in L^\infty(0,\infty;H)$, $u_0\in V$, and $T>0$, there exists a unique solution $u\in C([0,T];V)\cap L^2(0,T;D(A))$ such that $u(0)=u_0$. Moreover, there exists $t_0=t_0(\|u_0\|,|f|)$ such that
    \begin{align}\label{est:absorb:L2}
        \sup_{t\geq t_0}|u(t)|\leq 2\nu\si_{-1}\Gr=:2\rho_0,\qquad \sup_{t\geq t_0}\|u(t)\|\leq 2\nu \Gr=:2\rho_1.
    \end{align}
{Moreover, if we let $u(\cdotp;u_0,f)$ denote the unique global-in-time solution of \eqref{eq:nse:ff}, then the balls $B_H(\rho_0)$, $B_V(\rho_1)$ are forward-invariant sets and $B_H(2\rho_0)$, $B_V(2\rho_1)$ are forward-absorbing for the process $\{S(t,s;f):t\geq s\}$ induced by $u(\cdotp;u_0,f)$, i.e., $S_{(F,M)}(t,s;f)u_0=u(t;u_0,\tau_sf)$, $u(s;u_0,\tau_sf)=u_0$, where $\tau_sf(t)=f(t+s)$, satisfy
    \begin{align}\notag
        S(t,0;f)u_0\in B_H(\rho_0)\ (\text{resp.}\ B_V(\rho_1)),\quad \text{for all}\ u_0\in B_H(\rho_0)\ (\text{resp.}\ B_V(\rho_1)),
    \end{align}
and, for any $u_0$ belonging to a bounded set in $H$ (resp. $V$), there exists $t_0>0$ such that
    \begin{align}\notag
         S(t,0;f)u_0\in B_H(2\rho_0)\ (\text{resp.}\ B_V(2\rho_1)),\quad \text{for all}\ t\geq t_0.
    \end{align}
}
\end{Thm}

We will refer to the solutions guaranteed by \cref{prop:nse:ball} as \textit{strong solutions} {and the corresponding process induced by the global well-posedness of its initial value problem as the \textit{Navier-Stokes process}}. We note that the forward-invariance (and forward-absorbing property) of $B_H(\rho_0)$ and $B_V(\rho_1)$ follow from the elementary inequalities which hold for strong solutions of \eqref{eq:nse:ff}:
    \begin{align}\label{est:abs:ball}
        \begin{split}
        |u(t)|^2&\leq e^{-\nu t}|u_0|^2+\rho_0^2(1-e^{-\nu t}),
        \\
        \lVert u(t)\rVert^2&\leq e^{-\nu t}\lVert u_0\rVert^2+\rho_1^2(1-e^{-\nu t}),
        \end{split}
    \end{align}
for all $t\geq0$ and $u_0\in V$, and let $p=P_Ku$ denote the projection of $u$ onto the wave-numbers $|k|\leq K$.

We will also make use of the global well-posedness (in the sense implied by  \cref{def:gwp}) of the corresponding initial value problems for direct-replacement filter and the nudging system, which were developed in \cite{OlsonTiti2003} and \cite{AzouaniOlsonTiti2014}, respectively. We state them here for the sake of completeness. For both statements, given $f\in L_{loc}^\infty(0,\infty;H)$ and $u_0\in V$, we let $u$ denote the unique global-in-time solution to \eqref{eq:nse:ff} such that $u\in C([0,T];V)\cap L^2(0,T;D(A))$ and $\frac{du}{dt}\in L^2(0,T;H)$, for all $T>0$.

\begin{Thm}[Theorem 3.1, \cite{OlsonTiti2003}]\label{thm:OT}
 For any $K>0$ and $q_0\in V$ such that $Q_Nq_0=q_0$, there exists a unique $q$ such that $q\in C([0,T];V)\cap L^2(0,T;D(A))$, $\frac{dq}{dt}\in L^2(0,T;H)$, for all $T>0$, and satisfies
    \begin{align}\label{eq:sync}
        \frac{dq}{dt}+\nu Aq+Q_KB(p+q,p+q)=Q_Kf,\quad q(0)=q_0.
    \end{align}
In particular, for $v=P_Ku+q$, the pair $(u,v)$ equivalently satisfies the following system of equations:
    \begin{align}\label{eq:sync:OT}
        \begin{split}
        \frac{du}{dt}+\nu Au+B(u,u)&=f,\quad u(0)=u_0\\
        \frac{dv}{dt}+\nu Av+B(v,v)&=f+P_K\left(B(v,v)-B(u,u)\right),\quad v(0)=P_Ku_0+q_0.
        \end{split}
    \end{align}
\end{Thm}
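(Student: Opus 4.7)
The plan is to prove existence by Galerkin approximation, uniqueness by a standard energy/Gronwall argument, and then deduce the equivalence with the coupled system \eqref{eq:sync:OT} by an algebraic manipulation using $P_N$ and $Q_N$. Throughout, $p=P_Nu$ is a \emph{given} function inherited from the underlying strong solution $u$ guaranteed by \cref{prop:nse:ball}; in particular, $p\in C([0,T];D(A^{k/2}))$ for every $k\geq 0$ with bounds depending only on $N$, $\nu$, $\Gr$ and $\|u_0\|$, thanks to the Bernstein inequality \eqref{est:Bernstein}. Hence one should treat the equation for $q$ as a forced $Q_N$-projected Navier-Stokes-type system with smooth drift coming from $p$.

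For existence, I would take the Galerkin system
\begin{align*}
\frac{dq_m}{dt}+\nu Aq_m+P_mQ_NB(p+q_m,p+q_m)=P_mQ_Nf,\qquad q_m(0)=P_mq_0,
\end{align*}
for $m>N$, which is a locally Lipschitz ODE in a finite-dimensional subspace of $Q_NH$, hence admits a local solution. To promote this to a global solution and get uniform estimates in $m$, I would test first against $q_m$ to produce an $L^\infty_tH\cap L^2_tV$ bound using \eqref{eq:B:skew} and \eqref{est:B:ext} to control $\lp B(p+q_m,p+q_m),q_m\rp=\lp B(p+q_m,p),q_m\rp$ by $\|p\|_{D(A)}$-quantities; then test against $Aq_m$. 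Because $q_m=Q_Nq_m$, we have $Aq_m=Q_NAq_m$, so the nonlinear term becomes $\lp B(p+q_m,p+q_m),Aq_m\rp=\lp B(p+q_m,p+q_m),A(p+q_m)\rp-\lp B(p+q_m,p+q_m),Ap\rp$, and the first piece vanishes by \eqref{eq:B:enstrophy}. The remaining inner product is estimated using \eqref{est:B:ext:H} and absorbed into $\nu|Aq_m|^2$ after a Young inequality, producing
\begin{align*}
\frac{d}{dt}\|q_m\|^2+\nu|Aq_m|^2\leq C_\nu\bigl(1+\|p+q_m\|^4\bigr)\|p+q_m\|^2|Ap|^2+C_\nu|Q_Nf|^2,
\end{align*}
after which Gronwall yields uniform $L^\infty_tV\cap L^2_tD(A)$ bounds together with $L^2_tH$ control of $dq_m/dt$ from the equation itself. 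Standard Aubin--Lions compactness then lets one pass to the limit; the uniqueness argument below handles the nonlinearity in the limit. I expect this last pair of estimates (getting $|Aq|$ control from the nonlinear term $Q_NB(p+q,p+q)$) to be the main technical step, though it is routine once the cancellation $Aq=Q_NAq$ is observed.

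For uniqueness, given two solutions $q^{(1)},q^{(2)}$ with the stated regularity, let $w=q^{(1)}-q^{(2)}$; then $w(0)=0$ and
\begin{align*}
\frac{dw}{dt}+\nu Aw+Q_N\bigl[B(p+q^{(1)},w)+B(w,p+q^{(2)})\bigr]=0.
\end{align*}
Testing with $w$, using \eqref{eq:B:skew} and \eqref{est:B:ext}, I would bound the cubic terms by $C\|p+q^{(j)}\||p+q^{(j)}|^{1/2}\|p+q^{(j)}\|^{1/2}\|w\||w|$ (Ladyzhenskaya), absorb $\nu\|w\|^2$ on the left, and close by Gronwall using that $p+q^{(j)}\in L^\infty_tV\cap L^2_tD(A)$. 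This forces $w\equiv 0$.

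Finally, for the equivalence with \eqref{eq:sync:OT}, set $v:=P_Nu+q=p+q$. Applying $P_N$ and $Q_N$ to the proposed equation for $v$ and using $P_NQ_N=Q_NP_N=0$ shows that $P_Nv$ satisfies the same equation as $p$ (namely $\frac{dp}{dt}+\nu Ap+P_NB(u,u)=P_Nf$) with the same initial data, so $P_Nv=p$, while $Q_Nv$ solves \eqref{eq:sync}; conversely, adding the $p$-equation (obtained from projecting \eqref{eq:nse:ff} by $P_N$) to \eqref{eq:sync} yields $\frac{dv}{dt}+\nu Av+P_NB(u,u)+Q_NB(v,v)=f$, and rewriting $P_NB(u,u)=B(v,v)-Q_NB(v,v)-P_N(B(v,v)-B(u,u))$ gives exactly the second line of \eqref{eq:sync:OT}. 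This equivalence, together with existence and uniqueness for \eqref{eq:sync}, immediately transfers well-posedness to the coupled form.
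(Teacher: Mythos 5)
The paper does not prove \cref{thm:OT}; it is stated as a citation to \cite{OlsonTiti2003} ``for the sake of completeness,'' so there is no in-paper argument to compare against. Your outline is the standard one and matches the approach of \cite{OlsonTiti2003}: Galerkin approximation in a finite-dimensional subspace of $Q_NH$, the $H$-estimate closed via the skew-symmetry \eqref{eq:B:skew} (so only $\lp B(p+q_m,p),q_m\rp$ survives), the $V$-estimate closed via the observation that $Aq_m=Q_NAq_m$ lets you add and subtract $Ap$ and kill $\lp B(p+q_m,p+q_m),A(p+q_m)\rp$ by \eqref{eq:B:enstrophy}, Aubin--Lions to pass to the limit, Gr\"onwall for uniqueness, and a projection argument for the equivalence with \eqref{eq:sync:OT}. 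Two small remarks: the precise powers in your claimed enstrophy bound are not quite what comes out of \eqref{est:B:ext:H} plus Young (you pick up a $|Aq_m|^{1/2}$ factor from $|A(p+q_m)|^{1/2}$, so the complementary exponent is $4/3$, not $2$); a cleaner closing is to use \eqref{eq:B:skew} once more to write $\lp B(p+q_m,p+q_m),Ap\rp=-\lp B(p+q_m,Ap),p+q_m\rp$, estimate by \eqref{est:B:ext}, and bound $\|Ap\|$ via \eqref{est:Bernstein} and \cref{prop:nse:ball}, which avoids absorbing into $\nu|Aq_m|^2$ entirely. Neither issue affects the validity of the argument.
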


\begin{Thm}[Theorem 6, \cite{AzouaniOlsonTiti2014}]\label{thm:AOT}
For any $K>0$ and $\tv_0\in V$, there exists a unique $\tv$ such that $\tv\in C([0,T];V)\cap L^2(0,T;D(A))$, $\frac{d\tv}{dt}\in L^2(0,T;H)$, for all $T>0$, and satisfies
    \begin{align}\label{eq:nudge}
        \frac{d\tv}{dt}+\nu A\tv+B(\tv,\tv)=f-\mu P_K\tv+\mu P_Ku,\quad \tv(0)=\tv_0.
    \end{align}
In particular, the pair $(u,\tv)$ satisfies the following system of equations:
    \begin{align}\label{eq:nudge:AOT}
        \begin{split}
        \frac{du}{dt}+\nu Au+B(u,u)&=f,\quad u(0)=u_0\\
        \frac{d\tv}{dt}+\nu A\tv+B(\tv,\tv)&=f-\mu P_K\tv+\mu P_Ku,\quad \tv(0)=\tv_0.
        \end{split}
    \end{align}
\end{Thm}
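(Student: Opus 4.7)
The plan is to treat \eqref{eq:nudge} as a perturbation of the 2D NSE by a bounded, dissipative, zeroth-order feedback, and therefore adapt the classical Faedo--Galerkin + compactness proof used to establish \cref{prop:nse:ball}. The key structural observation is that, upon moving $\mu P_N \tv$ to the left-hand side, the equation becomes
\begin{align}\notag
\frac{d\tv}{dt}+\nu A\tv+\mu P_N\tv +B(\tv,\tv)=f+\mu P_Nu,
\end{align}
where the operator $\nu A+\mu P_N$ is self-adjoint, positive, and coercive, and where the right-hand side is a known element of $L^\infty(0,\infty;H)$, because $f\in L^\infty(0,\infty;H)$ and, by \cref{prop:nse:ball}, $u\in L^\infty(0,\infty;V)$ with bound $\rho_1$ after time $t_0(\|u_0\|,|f|)$, while standard energy estimates control $u$ on $[0,t_0]$ for data in $V$. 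Thus $\mu P_N u\in L^\infty(0,\infty;H)$.

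First, I would set up the Galerkin scheme by projecting onto $P_m$ for $m\geq N$, so that $P_N$ and $P_m$ commute, reducing \eqref{eq:nudge} to a locally Lipschitz ODE on $P_mH$; local existence is immediate. Next, I would derive the two canonical a priori estimates. Testing against $\tv_m$ in $H$ and using \eqref{eq:B:skew} to kill the nonlinear term yields
\begin{align}\notag
\tfrac{1}{2}\frac{d}{dt}|\tv_m|^2+\nu \|\tv_m\|^2+\mu|P_N\tv_m|^2=(f,\tv_m)+\mu(P_Nu,\tv_m),
\end{align}
from which Cauchy--Schwarz, Young, and \eqref{est:Poincare} give a uniform bound in $L^\infty(0,T;H)\cap L^2(0,T;V)$ independently of $m$. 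Testing against $A\tv_m$ and invoking the enstrophy identity \eqref{eq:B:enstrophy} together with the estimate $|AP_N\tv_m|\leq N\|P_N\tv_m\|$ (see \eqref{est:Bernstein}) and the bound on $P_Nu$ yields a uniform bound in $L^\infty(0,T;V)\cap L^2(0,T;D(A))$; from the equation, $d\tv_m/dt$ is then uniformly bounded in $L^2(0,T;H)$, using \eqref{est:B:ext:H} to control the nonlinearity.

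Third, extract a subsequence converging weakly-$\ast$ in $L^\infty(0,T;V)$, weakly in $L^2(0,T;D(A))$, and, via the Aubin--Lions lemma, strongly in $L^2(0,T;V)$, which suffices to pass to the limit in $B(\tv_m,\tv_m)$ in the sense of distributions; continuity in time with values in $V$ follows from the standard Lions--Magenes lemma once one has $\tv\in L^2(0,T;D(A))$ and $d\tv/dt\in L^2(0,T;H)$. Finally, uniqueness: given two solutions $\tv^{(1)},\tv^{(2)}$, set $w=\tv^{(1)}-\tv^{(2)}$ and test the difference equation against $w$. Using \eqref{eq:B:skew} one has $(B(\tv^{(1)},\tv^{(1)})-B(\tv^{(2)},\tv^{(2)}),w)=(B(w,\tv^{(2)}),w)$, which by \eqref{est:B:ext} is bounded by $C_L\|w\|\|\tv^{(2)}\||w|$, and the nudging contribution $\mu|P_N w|^2\geq 0$ is favorable, so Young's inequality and Gr\"onwall's lemma close the estimate thanks to $\tv^{(2)}\in L^2(0,T;V)$.

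The main obstacle is essentially bookkeeping rather than a conceptual difficulty: one must verify that the extra terms $-\mu P_N\tv+\mu P_Nu$ do not destroy the coercivity/compactness structure that makes the 2D NSE proof work. The term $-\mu P_N \tv$ is dissipative and bounded, hence harmless; the term $\mu P_N u$ is an $L^\infty(0,\infty;H)$ source by \cref{prop:nse:ball}, so it can be absorbed into the effective forcing. Once this is noted, the remainder is a direct transcription of the classical 2D NSE well-posedness argument, which is precisely the route taken in \cite{AzouaniOlsonTiti2014}.
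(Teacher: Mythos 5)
The paper does not prove \cref{thm:AOT}; it simply records it as Theorem~6 of \cite{AzouaniOlsonTiti2014} for later use, so there is no in-paper argument to compare against. Your outline correctly reconstructs the Galerkin--compactness proof of that reference: moving $\mu P_N\tv$ to the left yields a coercive linear part $\nu A+\mu P_N$, the term $\mu P_Nu$ is a bounded $H$-valued source via \cref{prop:nse:ball}, and the energy estimate via \eqref{eq:B:skew}, the enstrophy estimate via \eqref{eq:B:enstrophy} (with $-\mu\|P_N\tv_m\|^2\leq 0$ favorable), the $L^2(0,T;H)$ bound on $d\tv_m/dt$ through \eqref{est:B:ext:H}, Aubin--Lions, and the standard uniqueness argument together close the proof exactly as in the classical 2D NSE theory.
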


We conclude this section {with} an elementary {Gr\"onwall-type lemma which we will make use of later}.

\begin{Lem}\label{lem:gronwall:decay}
Let $z_1,z_2,z_3:(0,\infty)\goesto[0,\infty)$ be given such that $\lim_{t\goesto\infty}z_j(t)=0$, for $j=1,2$. Suppose $x:[0,\infty)\goesto[0,\infty)$ is a differentiable function such that
	\begin{align}\notag
		x'+\al x+\be y \leq z_1+z_2x+z_3y,
	\end{align}
holds for for all $t>0$, for some $\al, \be>0$, and some dominating function $y:[0,\infty)\goesto[0,\infty)$, i.e., $x\leq y$. Then $\lim_{t\goesto\infty}x(t)=0$.
\end{Lem}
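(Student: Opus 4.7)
The plan is to reduce the given differential inequality to a standard first-order linear Gr\"onwall form with a vanishing inhomogeneity, then close by integrating against an exponential factor and invoking an $\veps$-splitting argument.

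The first move is to absorb the $z_3 y$ term on the right into the $\be y$ term on the left. Since $y\geq 0$, one has $(z_3-\be)y\leq 0$ whenever $z_3(t)<\be$; reading the decay hypothesis as applying to $z_3$ as well (the indexing in the statement appears to intend $j=1,2,3$), pick $T_0$ with $z_3(t)<\be$ for $t\geq T_0$. Then the inequality collapses on $[T_0,\infty)$ to $x'+\al x\leq z_1+z_2 x$. Next, exploiting $z_2\goesto 0$, choose $T\geq T_0$ so that $z_2(t)\leq \al/2$ for $t\geq T$, giving
    \begin{align}\notag
        x'+\tfrac{\al}{2}x\leq z_1,\quad t\geq T.
    \end{align}

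Applying the integrating factor $e^{(\al/2)t}$ and integrating from $T$ to $t$ yields
    \begin{align}\notag
        x(t)\leq e^{-(\al/2)(t-T)}x(T)+\int_T^t e^{-(\al/2)(t-s)}z_1(s)\,ds.
    \end{align}
The boundary term decays exponentially. For the convolution, the standard argument applies: given $\veps>0$, pick $T_1\geq T$ with $z_1(s)<\veps$ for $s\geq T_1$; the integral over $[T,T_1]$ is dominated by a constant multiple of $e^{-(\al/2)(t-T_1)}$ and hence vanishes as $t\goesto\infty$, while the integral over $[T_1,t]$ is bounded by $2\veps/\al$. Sending $\veps\goesto 0$ completes the proof that $x(t)\goesto 0$.

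The main subtlety, and the step I would scrutinize first, is handling the $z_3 y$ term: the dominating hypothesis $x\leq y$ furnishes only a lower bound on $y$, which cannot by itself control an upper bound on $z_3 y$ from the other terms of the inequality. The cleanest and apparently intended resolution is to require $z_3\goesto 0$ (or at least $z_3\leq \be$ for large $t$), after which everything else is routine. Absent some such condition, the $(z_3-\be)y$ residual cannot be discarded, and the reduction to a pure Gr\"onwall inequality on $x$ alone would break down.
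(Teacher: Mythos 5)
Your proof is correct and takes essentially the same $\veps$-Gr\"onwall approach as the paper; you also correctly diagnose the typo in the statement (the paper's own proof indeed assumes $z_j(t)\goesto 0$ for $j=1,2,3$). The one small divergence: after thresholding $z_3<\be$ you discard the nonnegative residual $(\be - z_3)y\geq 0$, whereas the paper retains $\be(1-\veps)y$ and uses the domination $x\leq y$ to fold it into a stronger damping $(\al+\be)(1-\veps)x$; your version incidentally shows the lemma holds assuming only $y\geq 0$, making no actual use of $x\leq y$.
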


\begin{proof}
Since $z_j(t)\goesto0$ as $t\goesto\infty$, for $j=1,2,3$, given $0<\eps<1$, there exists $t_0>0$ such that $z_1(t)\leq (\al+\be)\eps(1-\eps)$ and $z_2(t)\leq \al\eps$, $z_3(t)\leq \be\eps$, for all $t\geq t_0$. Then
    \begin{align}
        x'+\al x+\be y&\leq (\al+\be)\eps(1-\eps)+\al \eps x+\be\eps y\notag,
    \end{align}
for all $t\geq t_0$. In particular
    \begin{align}\notag
        x'+\al(1-\eps) x+\be(1-\eps)y&\leq (\al+\be) \eps(1-\eps).
    \end{align}
Since $x\leq y$, it follows that
    \begin{align}\notag
        x'+(\al+\be)(1-\eps)x\leq (\al+\be)\eps(1-\eps).
    \end{align}
Hence
    \begin{align}
        x(t)&\leq e^{-(\al+\be)(1-\eps)(t-t_0)}x(t_0)+\eps(1-e^{-(\al+\be)(1-\eps)(t-t_0)})\notag
        \\
        &=e^{-(\al+\be)(1-\eps)(t-t_0)}(x(t_0)-\eps)+\eps.\notag
     \end{align}
Denote by $a_+(t)=\max\{a(t),0\}$.  We then choose $t_1\geq t_0$ such that 
    \begin{align}\notag
        t_1\geq  t_0+\frac{1}{(\al+\be)(1-\eps)}\ln\left(\frac{(x(t_0)-\eps)_+}{\eps}\right).
    \end{align}
Thus for all $t\geq t_1$
	\begin{align}\notag
		x(t)\leq 2\eps,
	\end{align}
as desired.
\end{proof}

\section{The Paradigm of Intertwinement and Self-Synchronization}\label{sect:intertwine}

{In this section, we introduce the concept of intertwinement and self-synchronization, as well as associated terminology for the intertwinement framework. Its main goal is to establish a general descriptive theory, in the particular setting of the 2D Navier-Stokes equations, in which the determining modes property can be located as a special case; of course, the framework developed here is intended to apply more generally to other dissipative systems. Once this has been done, we then conceptualize the notion of a continuous data assimilation algorithm within the intertwinement framework and proceed to develop rigorous statements between the ability of such algorithms to asymptotically reconstruct the unobserved state variables and self-synchronizing intertwinements. One of the main achievements of the intertwinement framework is to demonstrate a form of equivalence between the finite determining modes property of the 2D Navier-Stokes system and the asymptotic state reconstruction property of continuous data assimilation algorithms. Finally, we conclude the section by developing several explicit examples of intertwinements, thus demonstrating the richness of the theory and legitimacy of this perspective.

To begin, let us} recall the definition of determining modes for the 2D NSE, originally introduced by Foias and Prodi in \cite{FoiasProdi1967}.

\begin{Def}\label{def:dm}
Given $f_1,f_2\in L^\infty(0,\infty;H)$, let $u_1=u(\cdotp; u_0^1), u_2=u(\cdotp;u_0^2)$ denote the global-in-time unique strong solutions of the initial value problems
    \begin{align}\label{eq:nse:coupled}
        \begin{split}
        \frac{du_1}{dt}+\nu Au_1+B(u_1,u_1)&=f_1,\quad u_1(0)=u_0^1,
        \\
         \frac{du_2}{dt}+\nu Au_2+B(u_2,u_2)&=f_2,\quad u_2(0)=u_0^2.
         \end{split}
    \end{align}
We say that the {Navier-Stokes process} has the \textbf{finite determining modes property} if for all $f_1,f_2$, there exists a finite $N>0$ such that 
     \begin{align}\label{cond:dm}
         |P_Nu(t;u_0^1) - P_Nu(t;u_0^2)| \goesto 0\quad\text{and}\quad|f_1(t) - f_2(t)| \goesto 0,\quad \text{as}\ t\goesto\infty,
     \end{align}
implies
     \begin{align}\label{eq:dm}
         |u(t;u_0^1) - u(t;u_0^2)| \goesto 0,\quad\text{as}\ t\goesto\infty,
     \end{align}
for all $u_0^1,u_0^2\in V$; the smallest $N$, for a given $f_1,f_2$ above, is the \textbf{number of determining modes}, {which we will typically denote as $N_{dm}$}.
\end{Def}

The core idea is to expand \cref{def:dm} in a way that effectively allows $f_1, f_2$ to depend on $u_1,u_2$. We do so by introducing the notion of \textit{intertwinement}.
 
\begin{Def}\label{def:intertwined}
Let $g_1,g_2\in L^\infty(0,\infty;H)$ and ${F}:V\goesto H$ such that ${F(0)=0}$. Then the \textbf{intertwined Navier-Stokes system} is given by
    \begin{align}\label{eq:nse:intertwined}
        \begin{split}
        \frac{dv_1}{dt}+\nu Av_1+B(v_1,v_1)&=g_1+m_{11}{F}(v_1)+m_{12}{F}(v_2)
        \\
         \frac{dv_2}{dt}+\nu A{v}_2+B(v_2,v_2)&=g_2+m_{21}{F}(v_1)+m_{22}{F}(v_2),
         \end{split}
    \end{align}
for some $M=(m_{ij})_{i,j}\in\RR^{2\times2}$. 
{{We} refer to $F$ as an \textbf{intertwining function}, $M$ as an \textbf{intertwining matrix}, and the pair $(F,M)$ an \textbf{intertwining pair}. Finally, whenever $v=(v_1,v_2)$ satisfies \eqref{eq:nse:intertwined} for some intertwining pair, we call the triplet $(v,F,M)$ an \textbf{intertwinement}.}
\end{Def}

\begin{Def}\label{def:gwp}
{ 
Given $g_1,g_2\in L^\infty(0,\infty;H)$ and $v_0^1,v_0^2\in V$, we say that the associated initial value problem of \eqref{eq:nse:intertwined} is \textbf{globally well-posed} if there exists a unique pair $(v_1,v_2)$ such that for all $T>0$, it holds that $v_1,v_2\in C([0,T];V)\cap L^2(0,T;D(A))$ and satisfies \eqref{eq:nse:intertwined} for $t\in (0,T)$, with $v_j|_{t=0}=v_0^j$, for $j=1,2$. 
}
\end{Def}

{Given an intertwinement $(v,F,M)$ with globally well-posed initial value problem, we may subsequently denote by 
    \begin{align}\label{def:v:notation}
    v(\cdotp;v_0,g)=(v_1(\cdotp;v_0^1,g_1),v_2(\cdotp;v_0^2,g_2))
    \end{align}
the unique solution of \eqref{eq:nse:intertwined} corresponding to initial data $v|_{t=0}=v_0:=(v_0^1,v_0^2)$ and external force $g=(g_1,g_2)$. For the remainder of the section, it will be assumed that $(v,F,M)$ has globally well-posed initial value problem. We point out, however, that when we address specific examples of intertwinements, the issue of global well-posedness must be verified separately.
}

\begin{Def}\label{def:fdss}
An intertwinement is \textbf{finite-dimensionally-driven self-synchronous} if for all $g_1,g_2$, there exists a finite $N\geq0$ such that
	\begin{align}\label{cond:fdss}
	 |P_Nv_1(t,v_0^1,{g_1})-P_Nv_2(t;v_0^2,{g_2})|\goesto0\quad\text{and}\quad |g_1(t)-g_2(t)|\goesto0,\quad \text{as $t\goesto\infty$},
	 \end{align}
implies
	\begin{align}\label{eq:fdss}
		 |v_1(t;v_0^1,{g_1})-v_2(t;v_0^2,{g_2})|\goesto0,\quad\text{as $t\goesto\infty$},
	\end{align}
for all $v_0^1,v_0^2\in V$. {We denote the smallest such $N$, for a given $g_1,g_2$ as above, by $N_{ss}(g_1,g_2)$ and refer to the function $N_{ss}:=N_{ss}(g_1,g_2)$ as the \textbf{dimension of self-synchronization}.} 
\end{Def}

The special case of a zero-dimensionally-driven self-synchronous intertwinement, i.e., $N_{ss}\equiv0$, is of particular interest. 

\begin{Def}\label{def:fdss:zero}
We say that $(v,F,M)$ is \textbf{self-synchronous} if
	\begin{align}\notag
	   |g_1(t)-g_2(t)|\goesto0,\quad \text{as $t\goesto\infty$},
	 \end{align}
implies
	\begin{align}\notag
		 |v_1(t;v_0^1,{g_1})-v_2(t;v_0^2,{g_2})|\goesto0,\quad\text{as $t\goesto\infty$},
	\end{align}
for all $v_0^1,v_0^2\in V$.
\end{Def}

Throughout the paper, we will often refer to the properties of being finite-dimensionally-driven self-synchronous or self-synchronous as \textit{self-synchronization properties} or, simply \textit{synchronization properties}.

{With the above framework in hand, it is remarkable that an} example of {an} intertwinement is {provided when either $M\equiv0$ or} $F\equiv0$, i.e., without the assistance of coupling. {Note that in this case \eqref{eq:nse:intertwined} simply becomes \eqref{eq:nse:coupled}.} Indeed, this case {corresponds precisely to} the seminal result of Foias and Prodi  \cite{FoiasProdi1967}. {We refer to either of these special cases, $(v,0,M)$, $(v,F,0)$, as the} \textbf{\textit{trivial intertwinement}}. We may then equivalently state the classical result of Foias and Prodi in the following succinct manner.

\begin{Thm}[Existence of Determining Modes \cite{FoiasProdi1967}]\label{thm:Foias:Prodi}
The trivial intertwinement is finite-dimensionally-driven self-synchronous.
\end{Thm}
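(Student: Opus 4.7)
Let $w=u_1-u_2$ and split $w=p+q$ with $p=P_Nw$, $q=Q_Nw$, so $|w|^2=|p|^2+|q|^2$. The hypothesis gives $|p(t)|\to 0$, and since $p=P_Np$, Bernstein's inequality \eqref{est:Bernstein} upgrades this to $\|p(t)\|\le N|p(t)|\to 0$. The plan is therefore to show $|q(t)|\to 0$ via a high-frequency energy estimate, exploiting the spectral gap $\|q\|^2\ge N^2|q|^2$ on $Q_NH$ to absorb the only genuinely problematic trilinear term.

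Subtracting the two equations in \eqref{eq:nse:coupled}, using $B(u_1,u_1)-B(u_2,u_2)=B(w,u_1)+B(u_2,w)$, applying $Q_N$, and pairing with $q$ in $H$ gives, after using \eqref{eq:B:skew} to annihilate $\langle B(u_2,q),q\rangle$,
\[
\frac{1}{2}\frac{d}{dt}|q|^2+\nu\|q\|^2 = -\langle B(q,u_1),q\rangle -\langle B(p,u_1),q\rangle - \langle B(u_2,p),q\rangle + \langle f_1-f_2,q\rangle.
\]
The diagonal term is the heart of the Foias--Prodi mechanism: by \eqref{est:B:ext} and Young's inequality, $|\langle B(q,u_1),q\rangle|\le C_L\|u_1\|\|q\||q|\le \tfrac{\nu}{4}\|q\|^2 + C_L^2\nu^{-1}\|u_1\|^2|q|^2$. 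By \cref{prop:nse:ball}, there exists $t_0$ such that $\|u_j(t)\|^2\le \rho_1^2=\nu^2\Gr^2$ for all $t\ge t_0$; combined with the Bernstein lower bound on $q$, this term together with $\nu\|q\|^2$ produces the coercive coefficient
\[
\gamma := \nu N^2 - C_L^2\nu^{-1}\rho_1^2,
\]
which is made positive by choosing $N^2\gtrsim C_L^2\Gr^2$. The remaining cross terms $|\langle B(p,u_1),q\rangle|$ and $|\langle B(u_2,p),q\rangle|$ are bounded via \eqref{est:B:ext}, Bernstein on $p$ (to trade $\|p\|$ for $N|p|$), and the uniform $V$-bound on the $u_j$ by quantities of the form $\tfrac{\nu}{8}\|q\|^2+C(\nu,\rho_1,N)|p|^2$; and $|\langle f_1-f_2,q\rangle|\le \gamma^{-1}|f_1-f_2|^2 + \tfrac{\gamma}{4}|q|^2$ is handled by absorbing the last piece into the coercive term.

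Collecting everything yields, for $t\ge t_0$,
\[
\frac{d}{dt}|q|^2 + \tfrac{\gamma}{2}|q|^2 \le z(t), \qquad z(t) := C(\nu,\rho_1,N)|p(t)|^2 + 2\gamma^{-1}|f_1(t)-f_2(t)|^2,
\]
with $z(t)\to 0$ by hypothesis. Applying \cref{lem:gronwall:decay} with $x=y=|q|^2$, $\alpha=\gamma/2$, $\beta=z_2=z_3=0$, $z_1=z$, gives $|q(t)|\to 0$, and combining with $|p(t)|\to 0$ yields $|u_1(t)-u_2(t)|\to 0$, which is precisely \eqref{eq:dm}. Since the trivial intertwinement has $F\equiv 0$, every solution pair $(v_1,v_2)$ of \eqref{eq:nse:intertwined} is a pair of NSE solutions driven by $(g_1,g_2)=(f_1,f_2)$, so this establishes the finite-dimensionally assisted self-synchronous property. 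The only genuine obstacle is the quantitative selection of $N$ needed to subdue the diagonal trilinear term $\langle B(q,u_1),q\rangle$, an obstruction quantified exactly by the generalized Grashof number $\Gr$.
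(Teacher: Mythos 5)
Your proof is correct, and it is the standard Foias--Prodi high/low mode splitting argument; the paper itself states this theorem without proof, deferring to the cited reference, so there is no alternative argument in the paper to contrast against. Two very minor remarks: you should make explicit that the single $\rho_1$ bounding both $\|u_1\|$ and $\|u_2\|$ corresponds to $\max\{\Gr_1,\Gr_2\}$ since the two solutions have different forces, and in invoking \cref{lem:gronwall:decay} you set $\beta=0$ whereas the lemma as stated requires $\beta>0$ --- this is harmless (take $\alpha=\beta=\gamma/4$ with $y=x$), but worth flagging.
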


The fact that the trivial intertwinement is finite-dimensionally-driven self-synchronous is indicative of the fact that the low-modes and high-modes of the error between two solutions are sufficiently coupled \textit{via its nonlinearity} in order for low-mode synchronization to induce high-mode synchronization.

%We conclude this section by identifying a sufficient condition for ensuring finite-dimensionally-driven self-synchronous intertwinements.

\begin{Thm}\label{thm:fdss}
Let $(v,F,M)$ be an intertwinement. Suppose that there exists $N_*>0$ and $t_*$ sufficiently large such that 
    \begin{align}\label{cond:coercivity}
        &\left(B(v_1,v_1)-B(v_2,v_2)-\left((m_{11}-m_{21})F(v_1)+(m_{12}-m_{22})F(v_2)\right),v_1-v_2\right)\notag
               \\
        &\geq -\left[\eps\nu+C_2(\|P_Nv_1-P_Nv_2\|_{m_2})\right]\|v_1-v_2\|^2-C_1(\|P_Nv_1-P_Nv_2\|_{m_1})|v_1-v_2|^2\notag
        \\
        &\quad-C_0(\|P_Nv_1-P_Nv_2\|_{m_0}),
    \end{align}
holds for all $t\geq t_*$, for all $N\geq N_*$, for some $\eps\in(0,1)$, some integers $m_0,m_1,m_2$, and some non-negative, non-decreasing functions $C_0(x),C_1(x), C_2(x)$, defined for $x\geq0$, such that $C_j(0)=0$ and $C_j$ are continuous at $0$, for $j=0,1,2$. Then  {$(v,F,M)$} is finite-dimensionally-driven self-synchronous.
\end{Thm}

\begin{proof}
Let $w=v_1-v_2$, $h=g_1-g_2$, and $p=P_Nw$, for some $N$ to be specified. Observe that $w$ satisfies the following equation:
    \begin{align}\label{eq:w:general}
        &\frac{dw}{dt}+\nu Aw
        \\
        &+\left(B(v_1,v_2)-B(v_2,v_2)\right)-\left((m_{11}-m_{21})F_1(v_1)+(m_{12}-m_{22})F_2(v_2)\right)=h.\notag
    \end{align}
Upon taking the inner product in $L^2$ or \eqref{eq:w:general} with $w$, we obtain
    \begin{align}
        &\frac{1}2\frac{d}{dt}|w|^2+\nu\|w\|^2\notag
        \\
        &+\left(\left(B(v_1,v_2)-B(v_2,v_2)\right)-\left((m_{11}-m_{21})F_1(v_1)+(m_{12}-m_{22})F_2(v_2)\right),w\right)=(h,w).\notag
    \end{align}
By the Cauchy-Schwarz inequality and Young's inequality, we have
    \begin{align}
        |(h,w)|&\leq \|h\|_*\|w\|\leq \frac{1}{2\nu}\|h\|_*^2+\frac{\nu}2\|w\|^2.\notag
    \end{align}
Combining this with \eqref{cond:coercivity}, we then deduce that there exists $N_*>0$ such that
    \begin{align}\label{eq:w:gen:structure}
        \frac{d}{dt}|w|^2&+2(1-\eps)\nu\|w\|^2\notag
        \\
        &\leq \frac{1}\nu\|h\|_*^2+2C_2(\|P_Nv_1-P_Nv_2\|_{m_2})\|w\|^2+2C_1(\|p\|_{m_1})|w|^2+2C_0(\|p\|_{m_0}),
    \end{align}
holds for all $N\geq N_*$ and all $t\geq t_0$, where $t_0$ is sufficiently large. By the Poincar\'e inequality, it follows that
    \begin{align}\notag
       \|h\|_*\leq |h|,\quad C_j(\|p\|_{m_j})\leq C_j(N^{m_j}|p|),\quad j=0,1,2
       \end{align}
Now, in order to show that \cref{def:fdss} holds, let us assume that
    \begin{align}\notag
        |p(t)|\goesto0,\quad |h(t)|\goesto0,\quad \text{as}\ t\goesto\infty,
    \end{align}
where $N\geq N_*$. From the assumed continuity at $0$, it follows that $C_j(\|p(t)\|_{m_j})\goesto0$, as $t\goesto\infty$. In particular, $C_2(\|p(t)\|_{m_2})\leq\frac{1-\eps}2$, for all sufficiently large $t$. We therefore conclude, upon applying \cref{lem:gronwall:decay} to \eqref{eq:w:gen:structure}, that $|w(t)|\goesto0$ as $t\goesto\infty$, as desired.
\end{proof}

We also identify a similar condition for ensuring self-synchronous intertwinements.

\begin{Thm}\label{thm:ss}
Let $(v,F,M)$ be an intertwinement. Suppose that there exists $N_*>0$ and $t_*$ sufficiently large such that 
    \begin{align}\label{cond:coercivity:ss}
        &\left(B(v_1,v_1)-B(v_2,v_2)-\left((m_{11}-m_{21})F(v_1)+(m_{12}-m_{22})F(v_2)\right),v_1-v_2\right)\notag
               \\
        &\geq -\eps\nu\|v_1-v_2\|^2-H(t),
    \end{align}
holds for all $t\geq t_*$, for all $N\geq N_*$, for some $\eps\in(0,1)$, and some non-negative function $H$ such that $H(t)\goesto0$ as $t\goesto\infty$. Then  {$(v,F,M)$} is self-synchronous.
\end{Thm}

\begin{proof}
By the Cauchy-Schwarz inequality and Young's inequality, we have
    \begin{align}
        |(h,w)|&\leq \|h\|_*\|w\|\leq \frac{1}{(1-\eps)\nu}\|h\|_*^2+\frac{1-\eps}{2}\nu\|w\|^2.\notag
    \end{align}
Combining this with \eqref{cond:coercivity:ss}, we then deduce that there exists $N_*>0$ such that
    \begin{align}\label{eq:w:gen:structure:ss}
        \frac{d}{dt}|w|^2&+(1-\eps)\nu\|w\|^2\leq \frac{1}{(1-\eps)\nu}\|h\|_*^2+2H,
    \end{align}
holds for all $N\geq N_*$ and all $t\geq t_*$, where $t_*$ is sufficiently large. By the Poincar\'e inequality, it follows that $\|h\|_*\leq |h|$. To show that $(v,F,M)$ is self-synchronous, suppose that $|h(t)|\goesto0$. Since $H(t)\goesto0$ as $t\goesto0$, therefore conclude, upon applying \cref{lem:gronwall:decay} to \eqref{eq:w:gen:structure:ss}, that $|w(t)|\goesto0$ as $t\goesto\infty$, as desired.
\end{proof}

{In the next section, we formalize the notion of a \textit{continuous data assimilation algorithm} within the intertwinement framework. This will allow us to rigorously study the connection between the finite determining modes property of the Navier-Stokes equation and the ability of continuous data assimilation algorithms to reconstruct unobserved state variables.
}

\subsection{Continuous Data Assimilation Algorithms}\label{sect:cda}

{
In order to define a ``continuous data assimilation algorithm" within the intertwinement framework, we propose the following additional terminology. 

\begin{Def}\label{def:sync}
Given $v_1,v_2\in L^\infty(0,\infty;H)$, we say that $(v_1,v_2)$ is an \textbf{synchronous pair} or simply, \textbf{synchronous}, if
    \begin{align}\label{cond:f:sync}
            |v_1(t) - v_2(t)| \goesto 0,\quad \text{as}\ t\goesto\infty.
    \end{align}
 We say that $(u_1,u_2)$ is a \textbf{Foias-Prodi pair} if there exists $N>0$ and synchronous pair $(f_1,f_2)$ such that $(P_Nu_1,P_Nu_2)$ is also a synchronous pair, where $(u_1,u_2)$ satisfies \eqref{eq:nse:coupled}; we refer to $N$ as the \textbf{scale} of the Foias-Prodi pair. 
 
 Given $F:V\goesto H$, we say that $F$ is \textbf{strongly Foias-Prodi compatible} if there exists $N>0$ such that    
 \begin{align}\label{eq:F:adapted}
       \lim_{t\goesto\infty}|F(u_1(t))-F(u_2(t))|=0,
    \end{align}
holds along any Foias-Prodi pair $(u_1,u_2)$ at scale $N$. If \eqref{eq:F:adapted} holds along synchronous Foias-Prodi pairs, then $F$ is \textbf{weakly Foias-Prodi compatible.}
\end{Def}

\begin{Rmk}\label{rmk:FP:pair:convergent}
It is clear that if $(u_1,u_2)$ is a synchronous Foias-Prodi pair, then it is a Foias-Prodi pair at any finite scale $N<\infty$. In particular, $(u_1,u_2)$ is a synchronous Foias-Prodi pair if and only if it is a Foias-Prodi pair at scale $N=\infty$. Note that we interpret $P_\infty=I$ as the identity operator $Iu=u$. 

Lastly, note that the $N$ for which $F$ is strongly Foias-Prodi compatible may be thought of as indicating the ``scale of compatibility." In contrast, when $F$ is weakly Foias-Prodi compatible, there is no such associated notion since weakly Foias-Prodi compatible functions are agnostic to the scale of the Foias-Prodi pairs that $F$ is evaluated along.
\end{Rmk}

With this terminology, \cref{def:dm} can be restated in several equivalent forms.

\begin{Cor}\label{prop:dm:equiv}
The following are equivalent:
	\begin{enumerate}
		\item The Navier-Stokes process has the finite-determining modes property;
		\item For every synchronous pair $(f_1,f_2)\in L^\infty(0,\infty;H)^2$, there exists $N>0$ such that every corresponding Foias-Prodi pair at scale $N$ is also synchronous;
		\item For every synchronous pair $(f_1,f_2)\in L^\infty(0,\infty;H)^2$, there exists $N>0$ such that $(u_1,u_2)$ is synchronous whenever $(P_Nu_1,P_Nu_2)$ is synchronous.
	\end{enumerate}
\end{Cor}

\cref{def:fdss} can also be restated as follows.

\begin{Cor}\label{prop:fdss:equiv}
An intertwinement is finite-dimensionally-driven self-synchronous if and only if for every synchronous pair $(g_1,g_2)\in L^\infty(0,\infty;H)^2$, there exists $N\geq0$, such that $(v_1,v_2)$ is synchronous whenever $(P_Nv_1,P_Nv_2)$ is synchronous.
\end{Cor}

\begin{Rmk}\label{rmk:FP:pair}
The notion of a Foias-Prodi pair above clarifies the role of the initial data $u_0^1,u_0^2$. In particular, it allows for the possibility that a Foias-Prodi pair at scale $N<N_{dm}$ be synchronous for some initial data. In general, a Foias-Prodi pair need not be synchronous. Of course, any Foias-Prodi pair at scale $N\geq N_{dm}$ is automatically synchronous, independently of the initial data. From this point of view, it is thus legitimate to introduce the notion of a Foias-Prodi pair and, subsequently, to distinguish between when it is synchronous or not.
\end{Rmk}

We are now ready to define our notion of a ``continuous data assimilation algorithm," {which considers intertwinements formed by certain \textit{uni-directional} couplings}.

\begin{Def}\label{def:cda}
Any intertwinement {$(v,F,M)$ such that} $F$ is Foias-Prodi compatible and $M$ belongs to
    \begin{align}\label{eq:M:cda}
    M\in\cU:=\left\{\begin{pmatrix}0 & 0\\ m_1 & m_2\end{pmatrix}, \begin{pmatrix}m_1 & m_2\\ 0 & 0\end{pmatrix}: m_1,m_2\neq0\right\},
    \end{align}
is a \textbf{continuous data assimilation algorithm}. We say that it is of \textbf{strong-type} or \textbf{weak-type} if $F$ is either strongly or weakly  Foias-Prodi compatible, respectively.

Given a continuous data assimilation algorithm, we say that it possesses the \textbf{reconstruction property at scale $N$} if for all $f\in L^\infty(0,\infty,H)$, there exists $N$ such that
    \begin{align}\label{def:reconstruction}
        \lim_{t\goesto\infty}|v_1(t;v_0^1, {f})-v_2(t;v_0^2,{f})|\goesto0,
    \end{align}
holds for all $(v_0^1,v_0^2)\in V\times V$.
\end{Def}

\begin{Rmk}\label{rmk:cda:matrix}
Due to the symmetry present in the coupling \eqref{eq:nse:intertwined}, it is actually sufficient to consider matrices of the form $\begin{pmatrix}m_1&m_2\\ 0&0\end{pmatrix}$, for instance, in defining the class $\cU$ in \cref{def:cda}. Indeed, one can simply swap the roles of $v_1$ and $v_2$. For our purposes, it will be expedient to include both types of matrices in \eqref{eq:M:cda}. Also note that as a result of the symmetry, we may unambiguously adopt the convention that whenever $\tilde{\cU}\subset\cU$, then $\tilde{\cU}$ satisfies the property that $\begin{pmatrix}m_1&m_2\\ 0&0\end{pmatrix}\in\tilde{\cU}$ for some $m_1,m_2\in\RR$ if and only if $\begin{pmatrix}0&0\\ m_1&m_2\end{pmatrix}$; we will then refer to matrices of the form $\begin{pmatrix}m_1&m_2\\ 0&0\end{pmatrix}\in\tilde{\cU}$ as $M_+$, to reflect the fact that the coupling comes from the ``top" of the matrix, and $\begin{pmatrix}0&0\\ m_1&m_2\end{pmatrix}$ as $M_-$, to reflect the fact that the coupling comes from the ``bottom" of the matrix.
\end{Rmk}

Note that for matrices $M\in\cU$ whose first row, for instance, is the zero row, then the first component $v_1$ of $v$ is simply a solution to the 2D NSE. In particular, \cref{def:cda} formalizes the idea of a continuous data assimilation algorithm as an intertwinement whose intertwining function is suitably adapted to the Navier-Stokes process and whose intertwining matrix uni-directionally couples this perturbed NS system to the original NS system. In the context of such intertwinements, the concept of 
\textit{self-synchronization} then manifests as the ability of the second component, $v_2$, to \textit{asymptotically reconstruct} its first component, $v_1$. The following lemma provides a simple class of operators $F$ that give way to CDA algorithms of either weak- or strong-type.

\begin{Lem}\label{lem:dm:cda}
Suppose that $(F,M)$ is an intertwining pair such that $M\in\cU$ and $F:V\goesto H$ satisfies
    \begin{align}\label{cond:F:cts}
    \lim_{u,v\in V,\ \|u-v\|\goesto0}|F(u)-F(v)|=0.
    \end{align}
Then $(v,F,M)$ is a continuous data assimilation algorithm of weak-type.

If $F$ additionally satisfies $F=F\circ P_K$, for some $K>0$, then $(v,F,M)$ defines a continuous data assimilation algorithm of strong-type at scale $K$.
\end{Lem}

\begin{proof}
Given $N>0$, let $(u_1,u_2)$ denote any synchronous Foias-Prodi pair. By \eqref{cond:F:cts}, it therefore follows that $\lim_{t\goesto\infty}|F(u_1(t))-F(u_2(t))|=0$.

On the other hand, given a Foias-Prodi pair at scale $N\geq K$, we have $\lim_{t\goesto\infty}|P_Nu_1(t)-P_Nu_2(t)|=0$. Since $N\geq K$, we see that $\lim_{t\goesto\infty}|P_Ku_1(t)-P_Ku_2(t)|=0$. By \eqref{cond:F:cts}, it therefore follows that $\lim_{t\goesto\infty}|F(u_1(t))-F(u_2(t))|=\lim_{t\goesto\infty}|F(P_Ku_1(t))-F(P_Ku_2(t))|=0$.

\end{proof}

\cref{lem:dm:cda} 
thus shows that an intertwining pair $(F,M)$ induces a continuous data assimilation algorithm  in the sense of \cref{def:cda} whenever $M\in \cU$ and $F$ satisfies a form of continuity that is  suitably adapted to the finite determining modes property of the Navier-Stokes system.

We are now in a position to state and prove the main theorem of this section, which addresses a rigorous relation between continuous data assimilation of \textit{strong-type} with their reconstruction property. Informally stated, we prove that whenever a continuous data assimilation algorithm of strong-type has the reconstruction property, then the underlying Navier-Stokes process has the finite determining modes property, as long as the continuous data assimilation algorithm derives from a sufficiently rich class of finite-dimensionally-driven self-synchronous intertwinements. To state this result, let us introduce the following class of matrices:
    \begin{align}\label{def:M0}
        \cD:=\left\{D=\begin{pmatrix} d_1&0\\ 0&d_2\end{pmatrix}:d_1,d_2\neq0\right\}.
    \end{align}
We will also represent elements $D\in\cD$ by $D=\diag(d_1,d_2)$. Also, recall that an operator $F:V\goesto H$ is \textit{locally bounded} if for each ball $B\subset V$, there exists $C$ such that $|F(u)|\leq C$, for all $u\in B$.

\begin{Thm}\label{thm:main}
Let $F:V\goesto H$ be given such that $F(0)=0$ and $F$ is locally bounded. Suppose that 
    \begin{enumerate}
        \item[(A1)] there exists $\cM\subset \RR^{2\times2}$  ($\varnothing\neq\cM$) such that $(v,F,M)$ is a finite-dimensionally-driven self-synchronous intertwinement {of dimension $N_{ss}$} for all $M\in \cM$;
        \item[(A2)] there exists $\cUt\subset\cM$ ($\varnothing\neq\cUt$) such that $(v,F,M)$ is a continuous data assimilation algorithm at scale $N_0$ of strong-type {for all $M\in\cUt$};
        \item[(A3)] there exists $m\in\RR$ such that $\diag(m,m)\in\cM$ and $M_{0\pm}\in\cUt$ whose non-zero entries are equal to $m$, i.e., $M_{0+}$$=${\tiny{$\begin{pmatrix}m & m\\ 0 & 0\end{pmatrix}$}} and $M_{0-}$$=${\tiny{$\begin{pmatrix}0 & 0\\ m & m\end{pmatrix}$}}.
    \end{enumerate}
If every continuous data assimilation algorithm prescribed as above possesses the reconstruction property at scale $N_0$, then the Navier-Stokes process has the finite determining modes property. Moreover, the number of determining modes is bounded above by $\max\{N_{ss}, N_{0}\}$.
\end{Thm}

\begin{proof}
Let $N_*:=\max\{N_{ss}, N_0\}$ and fix {an arbitrary} $N\geq N_*$. For $j=1,2$, let $u_j(t)=u_j(t;u_0^j)$ satisfy \eqref{eq:nse:coupled}, where $u_0^j\in V$ is arbitrary. Suppose that 
    \begin{align}\label{eq:dm:hypothesis}
        \lim_{t\goesto\infty}|P_Nu_1(t)-P_Nu_2(t)|=\lim_{t\goesto\infty}|f_1(t)-f_2(t)|=0.
    \end{align}
We claim that $\lim_{t\goesto\infty}|u_1(t)-u_2(t)|=0$.

Given any $v_0^j\in V$, let $V_+(t)=(v_1(t;v_0^1,f_1), v_2(t;u_0^1,f_1))$ and observe that $V_+$ satisfies \eqref{eq:nse:intertwined} with $g_1\equiv g_2:=f_1$ and any $M_+\in\cUt$. Let $\pi_j$ denote the coordinate projection onto component $j$. Observe that $\pi_2V_+=u_1$. Similarly, let $V_-(t)=(v_1(t;u_0^2,f_2), v_2(t;v_0^2,f_2))$. Then $V_-$ satisfies \eqref{eq:nse:intertwined} with $g_1\equiv g_2:=f_2$ and any $M_-\in\cUt$, so that $\pi_1V_-=u_2$. Then
    \begin{align}\label{eq:u1u2}
        |u_1(t)-u_2(t)|&\leq |u_1(t)-\pi_1V_+(t)|+|\pi_1V_+(t)-\pi_2V_-(t)|+|\pi_2V_-(t)-u_2(t)|\notag\\
        &=|\pi_1V_+(t)-\pi_2V_+(t)|+|\pi_1V_+(t)-\pi_2V_-(t)|+|\pi_1V_-(t)-\pi_2V_-(t)|.
    \end{align}
In particular, $(V_\pm, F,M_\pm)$ are continuous data assimilation algorithms at scale $N_0$. Since $N\geq N_0$, by the assumed reconstruction property, it follows that
    \begin{align}\label{eq:sync:property}
        \lim_{t\goesto\infty}|\pi_1V_+(t)-\pi_2V_+(t)|=\lim_{t\goesto\infty}|\pi_1V_-(t)-\pi_2V_-(t)|=0.
    \end{align}
It therefore remains to show that 
    \begin{align}\label{eq:third:term}
        \lim_{t\goesto\infty}|\pi_1V_+(t)-\pi_2V_-(t)|=0.
    \end{align}

To this end, observe that by $(A3)$, we may choose $M_{0\pm}\in\cUt$, where $m=(M_{0+})_{11}=(M_{0+})_{12}=(M_{0-})_{21}=(M_{0-})_{22}$, such that $M_{\pm}=M_{0\pm}$. It follows that the variable $V:=(\pi_1V_+,\pi_2V_-)=(V_1,V_2)$ 
obeys the following intertwinement:
        \begin{align}\label{eq:intertwinement:M0}
         \begin{split}
        \frac{dV_1}{dt}+\nu AV_1+B(V_1,V_1)&=g_1+mF(V_1),\quad V_1(0)=v_0^1
        \\
         \frac{dV_2}{dt}+\nu AV_2+B(V_2,V_2)&=g_2+mF(V_2),\quad V_2(0)=v_0^2,
         \end{split}
    \end{align}
where $g_1:= f_1+ mF(u_1)$ and $g_2:=f_2+ mF(u_2)$. Thus \eqref{eq:intertwinement:M0} is an intertwinement with intertwining matrix $\diag(m,m)\in\cM$. By $(A1)$, we know that \eqref{eq:intertwinement:M0} is finite-dimensionally-driven self-synchronous. To verify that \eqref{eq:third:term} holds, it thus suffices to verify that
    \begin{align}\label{eq:fdss:verify}
        \lim_{t\goesto\infty}|P_NV_1(t)-P_NV_2(t)|=\lim_{t\goesto\infty}|g_1(t)-g_2(t)|=0.
    \end{align}

The second limit in \eqref{eq:fdss:verify} is zero since $(u_1,u_2)$ is a Foias-Prodi pair at scale $N\geq N_*\geq N_0$ and $F$ satisfies \eqref{eq:F:adapted}. Indeed, we have
    \begin{align}
        |g_1(t)-g_2(t)|\leq |f_1(t)-f_2(t)|+|m||F(u_1(t))-F(u_2(t))|.\notag    
    \end{align}
Since \eqref{eq:dm:hypothesis} holds, it follows that $\lim_{t\goesto\infty}|g_1(t)-g_2(t)|=0$. 

To see that the first limit in \eqref{eq:fdss:verify} is zero, observe that
    \begin{align}\notag
        |P_NV_1(t)-P_NV_2(t)|&\leq |P_NV_1(t)-u_1(t)|+|P_Nu_1(t)-P_Nu_2(t)|+|P_Nu_2(t)-P_NV_2(t)|\notag
        \\
        &\leq |V_1(t)-u_1(t)|+|P_Nu_1(t)-P_Nu_2(t)|+|u_2(t)-V_2(t)|\notag
        \\
        &=|\pi_1V_+(t)-\pi_2V_+(t)|+|P_Nu_1(t)-P_Nu_2(t)|+|\pi_1V_-(t)-\pi_2V_-(t)|.\notag
    \end{align}
By \eqref{eq:dm:hypothesis} and \eqref{eq:sync:property}, it follows that
    \begin{align}\notag
        \lim_{t\goesto\infty} |P_NV_1(t)-P_NV_2(t)|=0.
    \end{align}
We therefore deduce that \eqref{eq:fdss:verify} holds, which implies that \eqref{eq:third:term} holds, and ultimately, from \eqref{eq:u1u2}, that $|u_1(t)-u_2(t)|\goesto0$ as $t\goesto\infty$, holds.

Lastly, since $N\geq N_*$ was arbitrary, we observe that the number of determining modes is bounded above by $N_*$.
\end{proof}

\subsection{Examples of Intertwinements}\label{sect:examples}

In what follows, we identify several non-trivial choices of $(F,M)$ for which \eqref{eq:nse:intertwined} is {finite-dimensionally} self-synchronous. Furthermore, we identify two particular types of intertwinements for which the continuous data assimilation algorithms previously studied in \cite{OlsonTiti2003} and \cite{AzouaniOlsonTiti2014} can be realized as special cases.

\subsubsection{Nudging Intertwinement}\label{sect:intertwinement:nudge}

{
\begin{Def}\label{def:intertwined:nudge}
Given ${g_1}, {g_2}\in L^\infty(0,\infty;H)$, $N>0$, and matrix $M\in\RR^{2\times 2}$, consider:
    \begin{align}\label{eq:intertwined:nudge}
        \begin{split}
        \frac{dv_1}{dt} + \nu Av_1 + B(v_1,v_1) 
        &= 
        {g_1}+ m_{11}P_N v_1+ m_{12}P_N v_2,
	\\
        \frac{dv_2}{dt} + \nu Av_2 + B(v_2,v_2) 
        &= 
        {g_2} +m_{21}P_N v_1+ m_{22}P_N v_2.
        \end{split}
    \end{align}
We refer to \eqref{eq:intertwined:nudge} as the \textbf{nudging intertwinement} whenever $M$ belongs to either of the classes
    \begin{align}\label{def:intertwined:nudge:matrix}
    \cM_{\mu}^{sym}:=\left\{\begin{pmatrix}-\mu_1&\mu_2\\\mu_2&-\mu_1\end{pmatrix}:\mu_1,\mu_2\geq0\right\},\quad \cM_{\mu}^{mut}:=\left\{ \begin{pmatrix}-\mu_1&\mu_1\\ \mu_2&-\mu_2\end{pmatrix}:\mu_1,\mu_2\geq0\right\}.
    \end{align}
When $M$ is given by the first class of matrices, we refer to the intertwinement as the \textbf{symmetric nudging intertwinement}, while the intertwinements corresponding to the second class of matrices will be referred to as the \textbf{mutual nudging intertwinement}.
\end{Def}

In demonstrating both the global well-posedness issue and the finite-dimensional self-synchronous property for the nudging intertwinement, the key idea is to make use of the dissipative properties of the intertwining functions; the structure of the matrices in \eqref{def:intertwined:nudge:matrix} are suitable for this purpose. We are then ultimately able to prove the following results.

\begin{Thm}\label{thm:intertwined:nudge:gwp}
The initial value problem corresponding to the nudging intertwinement is globally well-posed.
\end{Thm}

As in \cref{sect:intertwinement:sync}, we show that the nudging intertwinement is also contains data assimilation algorithms.

\begin{Prop}\label{prop:nudge:strong}
The nudging intertwinement is a continuous data assimilation algorithm of strong-type at scale $K$ whenever $M\in\cM_\mu^{mut}$ and $\mu_1=0$ or $\mu_2=0$
\end{Prop}

\begin{proof}
Firstly, $M_{\pm}\in\cM_\mu^{mut}$ such that $M_-=\begin{pmatrix}0&0\\ \mu_2&-\mu_2\end{pmatrix}$ and $M_+=\begin{pmatrix}-\mu_1&\mu_1\\ 0&0\end{pmatrix}$ clearly belong to $\cU$. Secondly, let $F(u)=P_Ku$. Then $F:H\goesto H$ is uniformly continuous. Moreover, $F=F\circ P_K$. Therefore, by \cref{lem:dm:cda}, it follows that $(F,M)$ defines a continuous data assimilation algorithm of strong-type at scale $K$.
\end{proof}

In the companion work \cite{CarlsonFarhatMartinezVictor2025b}, we will also prove the following.

\begin{Thm}\label{thm:intertwined:nudge:fdss}
The nudging intertwinement is finite-dimensionally-driven self-synchronous.
\end{Thm}

We once again emphasize that an interpretation of \cref{thm:intertwined:nudge:fdss} is that the nudging algorithm for data assimilation satisfies a \textit{finite determining modes-type property}. Under additional assumptions on the intertwining matrix, we are able to show that the nudging intertwinement is self-synchronous.

\begin{Thm}\label{thm:intertwined:nudge:ss}
For each $g_1,g_2\in L^\infty(0,\infty;H)$, there exists $K_*>0$ such that the nudging intertwinement is self-synchronous whenever
    \begin{align}\notag
        K\geq K_*\quad\text{and}\quad \mu_1+\mu_2\geq\frac{1}4K_*^2.
    \end{align}
\end{Thm}

We immediately recover the result of \cite{AzouaniOlsonTiti2014} which asserts that the nudging algorithm for continuous data assimilation solves the state reconstruction problem.

\begin{Cor}\label{cor:AOT}
Given $f\in L^\infty(0,\infty;H)$, let $u$ denote the unique strong solution of \eqref{eq:nse:ff} and $\tv$ denote the corresponding nudged variable, so that $(u,\tv)$ satisfies \eqref{eq:nudge:AOT}. There exists $N_*$ sufficiently large such that 
    \[
        \lim_{t\goesto\infty}|\tv(t)-u(t)|=0,\quad \text{whenever}\quad
        N\geq N_*\quad\text{and}\quad \mu_1+\mu_2\geq\frac{1}4N_*^2.
    \]
\end{Cor}

\begin{proof}
We apply \cref{thm:intertwined:nudge:ss} with intertwining matrix $M\in\cM_\mu^{mut}$ such that $\mu_1=0$ and $\mu_2=\mu$, so that $(v_1,v_2)=(u,\tv)$. Then we set $N_*=K_*$, where $K_*$ is determined by \cref{thm:intertwined:nudge:ss}.
\end{proof}

}

On the other hand, the reconstruction property of the nudging algorithm was proven by \cite{AzouaniOlsonTiti2014}. As a corollary of this result, \cref{prop:nudge:strong}, and \cref{thm:intertwined:nudge:fdss}, we automatically deduce the following corollary of \cref{thm:main}:

\begin{Cor}\label{cor:intertwined:nudge:rpdmp}
The reconstruction property of the nudging algorithm implies that the Navier-Stokes process has the finite determining modes property.
\end{Cor}

In particular, we recover the results \cite[Theorem 3.10, 3.12]{BiswasHudsonLariosPei17} in the context of determining modes.

\subsubsection{Direct-Replacement Intertwinement}\label{sect:intertwinement:sync}

\begin{Def}\label{def:intertwined:sync}
Given ${g_1}, {g_2}\in L^\infty(0,\infty;H)$, $K>0$, and matrix $M\in\RR^{2\times 2}$, consider:
	\begin{align}\label{eq:intertwined:sync}
		\begin{split}
		\bdy_tv_1+\nu Av_1+B(v_1,v_1)&={g_1}+m_{11}P_KB(v_1,v_1)+m_{12}P_KB(v_2,v_2)
		\\
		\bdy_tv_2+\nu Av_2+B(v_2,v_2)&={g_2}+m_{12}P_KB(v_1,v_1)+m_{22}P_KB(v_2,v_2).
		\end{split}
	\end{align}
We refer to \eqref{eq:intertwined:sync} as the \textbf{direct-replacement intertwinement} whenever $M$ belongs to the class
    \begin{align}\label{def:intertwined:sync:matrix}
        \begin{split}
        \cM_{\tht}^{{sym}}&:=\left\{\begin{pmatrix}\tht_1&-\tht_2\\-\tht_2&\tht_1\end{pmatrix}: \tht_1+\tht_2=1\right\}
        \\
        \cM_{\tht}^{{mut}}&:=\left\{\begin{pmatrix}\tht_1&-\tht_1\\ -\tht_2&\tht_2\end{pmatrix}: \tht_1+\tht_2=1,\ \tht_1,\tht_2\geq0\right\}.
        \end{split}
    \end{align}
When $M\in{\cM_{\tht}^{sym}}$, we refer to the \eqref{eq:intertwined:sync} as the \textbf{symmetric direct-replacement intertwinement}, while the intertwinement corresponding to the second class of matrices are referred to as the \textbf{mutual direct-replacement intertwinement}.
\end{Def}

{In a follow-up work \cite{CarlsonFarhatMartinezVictor2025b}, we show that {these intertwinements are well-defined} in the sense of \cref{def:gwp}, and are finite-dimensionally-driven self-synchronous in the sense of \cref{def:fdss}. Nevertheless, a few relevant remarks are in order. Note that when $M=\diag(1,1)$, i.e., the identity matrix, the system \eqref{eq:intertwined:sync} is decoupled. In particular, the assertion that \eqref{eq:intertwined:sync} is finite-dimensionally-driven self-synchronous is {equivalent to the statement that a particular perturbed Navier-Stokes equation possesses the determining modes property.} On the other hand, when $\tht_1=0$ or $\tht_2=0$, then \eqref{eq:intertwined:sync} reduces to \eqref{eq:sync:OT}.
In all cases}, the main mathematical difficulty that must be dealt with in regards to the well-posedness of {the initial value problem corresponding to} \eqref{eq:intertwined:sync} is the loss of energy and enstrophy conservation (when $\nu=0$) due to the truncation of the quadratic nonlinearity. In our follow-up work \cite{CarlsonFarhatMartinezVictor2025b}, we develop an apriori analysis that overcomes this apparent obstruction by observing that the corresponding low-mode evolution is governed by a heat equation. This will ultimately allow us to prove the following.

\begin{Thm}\label{thm:sync:gwp}
The initial value problem corresponding to the direct-replacement intertwinement is globally well-posed for all $M\in\cM_\tht^{mut}$ and also for $M\in\cM_\tht^{sym}$ such that $\tht_1\in\{1/2,1\}$.
\end{Thm}

As \cref{thm:sync:gwp} suggests, the well-posedness of the symmetric direct-replacement intertwinement is more nuanced than its mutual counterpart. In particular, we are able to prove the following.

\begin{Thm}\label{thm:sync:sym:gwp}
For each $g_1,g_2\in L^\infty(0,\infty;H)$ and $v_0^1,v_0^2\in V$, there exists $K_1^*, K_2^*>0$ and $\de_1^*,\de_2^*>0$ such that the corresponding initial value problem associated to the symmetric direct-replacement intertwinement is globally well-posed, for all $K\geq K_1^*$ and  $\tht_1\geq1-\de_1^*$ or $K\geq K_2^*$ and  $|\tht_1-\tht_2|\leq \de_2^*$.
\end{Thm}

{
Both \cref{thm:sync:gwp} and \cref{thm:sync:intertwinement} are proven in the follow-up work \cite{CarlsonFarhatMartinezVictor2025b}. Nevertheless, let us verify that \eqref{eq:intertwined:sync} encompasses continuous data assimilation algorithms in the sense of \cref{def:cda}.

\begin{Prop}\label{thm:sync:cda}
The direct-replacement intertwinement is a continuous data assimilation algorithm of weak-type for $M\in\cM_\tht$ such that $\tht_1=0$ or $\tht_2=0$.
\end{Prop}

\begin{proof}
It suffices to consider the case $\tht_1=0$. Firstly, observe that $M=\begin{pmatrix}0&0\\ 1&1\end{pmatrix}\in\cU$. Secondly, given $u,v\in V$, let $w=u-v$. Then for $F(u)=P_KB(u,u)$, we have
    \begin{align}\notag
        F(u)-F(v)=P_KB(w,w)+P_KB(v,w)+P_KB(w,v).
    \end{align}
By H\"older's inequality, \eqref{est:Bernstein}, and \eqref{est:interpolation}, we have
    \begin{align}\notag
        |F(u)-F(v)|&\leq Kc_L\left(\|w\||w|^{1/2}+2\|v\|^{1/2}|v|^{1/2}\|w\|^{1/2}\right)|w|^{1/2}.
    \end{align} 
Thus, $F$ satisfies \eqref{cond:F:cts}. An application of \cref{lem:dm:cda} concludes the proof.
\end{proof}

We conclude this section with the main results regarding the self-synchronous property of the direct-replacement intertwinement.
}

\begin{Thm}\label{thm:sync:intertwinement}
The direct-replacement intertwinement is self-synchronous for all $M\in\cM_\tht^{mut}$ and also for $M\in\cM_\tht^{sym}$ such that $\tht_1\in\{1/2,1\}$.
\end{Thm}

Due to the more nuanced well-posedness theory for the symmetric direct-replacement intertwinement, one has a correspondingly nuanced statement for it when it is self-synchronous.

\begin{Thm}\label{thm:sync:intertwinement:sym}
For each $g_1,g_2\in L^\infty(0,\infty;H)$ and $v_0^1,v_0^2\in V$, there exists $K_1^*, K_2^*>0$ and $\de_1^*,\de_2^*>0$ such that corresponding symmetric direct-replacement intertwinement is self-synchronous whenever $K\geq K_1^*$ and $\tht_1\geq 1-\de_1^*$ or $K\geq K_2^*$ and $|\tht_1-\tht_2|\leq \de_2^*$.
\end{Thm}

Analogously to \cref{cor:AOT}, we immediately recover the result of \cite{OlsonTiti2003} regarding the reconstruction property of the direct-replacement algorithm for CDA.

\begin{Cor}\label{cor:OT}
Given $f\in L^\infty(0,\infty;H)$, let $u$ denote the unique strong solution of \eqref{eq:nse:ff} and $v$ denote the corresponding assimilated variable, so that $(u,v)$ satisfy \eqref{eq:sync:OT}. There exists $N_*$ sufficiently large such that
    \[
        \lim_{t\goesto\infty}|v(t)-u(t)|=0,
    \]
whenever $N\geq N_*$.
\end{Cor}

\begin{proof}
We apply \cref{thm:sync:intertwinement} with intertwining matrix $M\in\cM_\tht^{mut}$ such that $\tht_1=0$ and $\tht_2=1$, so that $(v_1,v_2)=(u,v)$. Then we set $N_*=K_*$, where $K_*$ is determined by \cref{thm:sync:intertwinement}.
\end{proof}

{

\begin{Rmk}Since the direct-replacement intertwinement only induces continuous data assimilation algorithms of weak-type, \cref{thm:main} does not apply. 
Our framework thus identifies a possible obstruction for the reconstruction property in continuous data assimilation algorithms (in the sense of \cref{def:cda}) to imply the existence of finite determining modes for the Navier-Stokes process. Indeed, in the case of the direct-replacement algorithm, synchronization of the low-modes is forced to occur in a specific way, that is, according to an unforced heat equation (see \cite[Section 3.0.2, Equation 3.8]{CarlsonFarhatMartinezVictor2025b}). In other words, nonlinear effects are not allowed to drive low-mode synchronization error in the direct-replacement algorithm; the intertwining function is chosen to exactly cancel these effects. On the other hand, the determining modes property is formulated (in \cref{def:dm}) in a manner that is indifferent to how the low-mode error is driven to zero. In this way, the direct-replacement algorithm can be viewed as a particular ``instantiation" of the determining modes property. This observation was already known to Olson and Titi in \cite{OlsonTiti2003} and was in fact the source of inspiration for the direct-replacement algorithm, i.e., to view it as a particular use-case of the determining modes property of the Navier-Stokes process. Our framework formalizes this intuition. Moreover, this intuition is manifested in fact that the direct-replacement intertwinement is finite-dimensionally-driven self-synchronous as a consequence of already being self-synchronous (see \cref{thm:sync:intertwinement}, \cref{thm:sync:intertwinement:sym}).

%has the finite determining modes property is that the reconstruction property in the direct-replacement algorithm can be viewed as a particular ``instantiation" of the determining modes property. 
%The reason for this being that in the direct-replacement algorithm, the low-modes are forced to synchronize asymptotically, while the determining modes property formulates low-mode synchronization as a hypothesis. This is indicative of the fact that the Navier-Stokes process is \textit{indifferent} to how the low-modes are driven to zero. On the other hand, the direct-replacement algorithm enforces a specific way in which the low-modes are driven to zero. 

%In contrast to the direct-replacement algorithm, the nudging algorithm allows nonlinear effects to drive low-mode synchronization error how the low-modes are driven to zero, 
In contrast to the direct-replacement algorithm, the nudging algorithm allows nonlinear effects to drive low-mode synchronization error; the nudging parameters must then be tuned accordingly to stabilize these effects to achieve synchronization.  This indicates that the perturbation, $F(\phi)=P_K\phi$, is in some sense ``weak" relative to the perturbation, $F(\phi)=P_KB(\phi,\phi)$, defining the direct-replacement algorithm. The fact that the nudging algorithm defines a continuous data assimilation algorithm of strong-type is thus a reflection of the fact that it ``weakly enforces" synchronization of low-modes, thereby giving way to a logically stronger notion. 

Nevertheless, in either the case of the direct-replacement or nudging intertwinements, the fact that they are both finite-dimensionally-driven self-synchronous is a precise interpretation of the statement that ``determining modes implies reconstruction property," as long as one is willing to accept that the concept of being finite-dimensionally-driven self-synchronous is the ``proper way" to formulate the determining modes property.
\end{Rmk}
}

\begin{Rmk}\label{rmk:uniform}
The global well-posedness of the nudging and direct-replacement intertwinements each face a common difficulty: obtaining uniform-in-time apriori bounds with suitable on the intertwining matrix. This technical matter is subtle to observe in the proof of \cref{thm:main}, where this issue comes into play. In particular, in writing \eqref{eq:intertwinement:M0}, we see that the forcing terms, $g_1, g_2$, in the intertwinement presented there, depend on $M$ and $F$ coming from a different intertwinement. However, $M$ depends on $\mu_j$, in the case of the nudging intertwinement, or $\tht_j$, in the case of the direct-replacement intertwinement, while $F$ typically depends on $N$. In the case of the nudging intertwinement, this issue is particularly acute since one must ensure consistency with the choice of $N_*$, which is chosen as the maximum among the scales that ensure synchronization for either the corresponding continuous data assimilation or the intertwinement induced by the proof. Specifically, it is important to show that $N_{ss}$ does not depend strongly on the parameter $m$ in spite of the choice of $g_1,g_2$ depending explicitly on $m$. In \cite{CarlsonFarhatMartinezVictor2025b}, we indeed overcome this issue and in fact show that $N_{ss}$ can be ultimately be chosen independently of $m$. In this case of the direct-replacement intertwinement, this issue can be dealt with by simply using the fact that $\tht_1,\tht_2\in[0,1]$, whereas with the nudging intertwinement, we exploit the sign-definite nature of the intertwining function $F$.

On the other hand, the direct-replacement intertwinement faces more significant difficulties in establishing global-well posedness since the intertwining function, $F$, destroys the energy and enstrophy inequalities typically enjoyed by solutions of the 2D NSE. In their study of direct-replacement algorithm for CDA, the main insight of \cite{OlsonTiti2003} was that the low-mode errors satisfy a heat equation, and therefore decay to zero. In establishing global well-posedness for the direct-replacement intertwinement we also seek exploit this phenomenon, but it is extracted in a much more subtle way. Moreover, the coupling between low-modes and high-modes induced by $M, F$, requires a more sophisticated bootstrap method. These details are borne out in \cite{CarlsonFarhatMartinezVictor2025b}.
\end{Rmk}

\begin{Rmk}\label{rmk:expand}
Note that one may extend the definition of the nudging intertwinement (\cref{def:intertwined:nudge}) to include projections other than the spectral projection, $P_N$, such as the so-called volume element projection or nodal value projection. More generally, one can indeed consider more general operators provided that they satisfy suitable approximation-of-identity properties, as was considered in \cite{AzouaniOlsonTiti2014}. However, we do not consider such {generalizations} here as it is not at the moment clear how to extend the synchronization intertwinement (\cref{def:intertwined:sync}) to accommodate projections other than $P_N$. Since the primary goal of this article is to present a unified theory of intertwinement that contains \textit{both} continuous data assimilation algorithms of \cite{OlsonTiti2003} and \cite{AzouaniOlsonTiti2014}, we therefore defer the study of its generalization to other forms of projection to a future work and simply point out that such a generalization is a very natural and relevant consideration.
\end{Rmk}

\section{Computational Results}\label{sect:numerical}
\noindent In this section, we numerically explore the convergence properties of various the intertwinements introduced in \cref{sect:intertwine} and studied above.

\subsection{Numerical Methods}\label{sect:numerical:tech}

Simulations of the 2D Navier-Stokes equations are performed in MATLAB (R2023b) using a fully dealiased pseudo-spectral code defined on the periodic box $\mathbb{T}^2 = [-\pi,\pi]^2$. That is, the spatial derivatives were calculated by multiplication in Fourier space. The equations were simulated at the stream function level, i.e. the 2D Navier-Stokes equations were written in the following form:
\begin{align}\label{scheme:NSE}
    \begin{split}
\psi_t + \De^{-1}(\nabla^{\perp}\psi\cdot\nabla)\De\psi &= \nu\De\psi + \De^{-1}\nabla^{\perp}\cdot f,
    \end{split}
\end{align}
where $\nabla^\perp=(-\bdy_y,\bdy_x)$ and $\De^{-1}$ denotes the inverse Laplacian, which is taken with respect to the periodic boundary conditions and the mean-free condition. The initial condition and parameters were chosen such that our simulations coincide with a turbulent regime. Specifically, the viscosity, $\nu$ was chosen to be $\nu = 0.0005$, and 
that the body force is as given in \cite{Olson_Titi_2008_TCFD} to be low mode forcing concentrated over a band of frequencies with $10\leq |\vec{k}|^2 \leq 12$. The forcing term is renormalized such that the Grashof number $G = \frac{\norm{f}_{L^\infty}}{\nu^2} = 100,000$. To produce the initial data that we used for our simulations we ran the 2D Navier-Stokes equations forward in time from zero initial data out to time $10,000$. We note that the initial profile is slightly under-resolved as it is slightly above machine precision (approximately $2.2204\times 10^{-16}$) at the 2/3 dealiasing line, see \cref{fig:spectrum}. The spectrum remains well-resolved for the duration of all of our simulations with the exception being the spectrum for certain cases shown in \cref{sect:computations:mutual}.

The time-stepping scheme we utilized was a semi-implicit scheme, where we handle the linear diffusion term implicitly via an integrating factor in Fourier space. For an overview of integrating factor schemes see e.g. \cite{Kassam_Trefethen_2005, Trefethen_2000_spML} and the references contained within. The equations are then evolved using an explicit Euler scheme, with both the nonlinear term and the feedback-control term treated explicitly with the nonlinear term computed using  $2/3$ dealiasing. We used a timestep of $\Delta t = 0.01$. In the following subsections, we present the results of various numerical tests confirming the results of our theorems. That is, we present numerical results indicating that each of the examples of intertwinements exhibit synchronization of $v_1, v_2$, at an exponential rate given that sufficiently many Fourier modes are implemented in the intertwining function.

We emphasize, once again, that the main intent of tests illustrated in this current section are to confirm the theoretical results established in the previous sections. A more comprehensive study probing the dynamical properties of intertwinement in greater generality and its relation to the dynamics of the underlying 2D NSE is most certainly warranted, especially in cases for which rigorous theorems are not currently available or for the cases which do, but are considered \textit{outside} of the parameter regimes asserted by the rigorous theorems. These further investigations will be the primary concern of a future work.

Before we describe the numerical results, we point out to the reader that it is convenient to borrow language from continuous data assimilation and refer to the modes implemented in the intertwining function as the ``observed modes," and the modes complementary to these as the ``unobserved modes."

\subsection{Direct-Replacement Intertwinement}\label{sect:computations:sync}

In this section we test an implementation of the synchronization intertwinement defined in \eqref{eq:intertwined:sync}.
We focus on the \emph{mutual direct-replacement intertwinement} i.e., \eqref{eq:intertwined:sync} with $M$$=${\tiny{$\begin{pmatrix}\tht_1&-\tht_1\\ -\tht_2&\tht_2\end{pmatrix}$}}, where $\tht_1+\tht_2=1$, and $\tht_1,\tht_2\geq0$, and \emph{symmetric direct-replacement intertwinement} i.e., \eqref{eq:intertwined:sync} with $M$$=${\tiny{$\begin{pmatrix}\tht_1&-\tht_2\\ -\tht_2&\tht_1\end{pmatrix}$}}, where $\tht_1+\tht_2=1$. For these intertwinements, we implement the intertwined system according to \eqref{scheme:NSE}, but with additional terms coming from the intertwining functions that are treated explicitly. We simulate these equations for various instances of the intertwining matrix, $M$, using spatial resolution, $N = 2^9$ and viscosity $\nu = 0.0005$. 
We utilized $g_1 = g_2 = f$, with $f$ being the time-independent forcing described in \cref{sect:numerical:tech}. 

% We focus only on the case of the \textit{mutual direct-replacement intertwinement}, i.e., \eqref{eq:intertwined:sync} with $M=\begin{pmatrix}\tht_1&-\tht_1\\ -\tht_2&\tht_2\end{pmatrix}$, where $\tht_1+\tht_2=1$, and $\tht_1,\tht_2\geq0$, and omit numerical tests for the case $M=\Id_{}$ since comprehensive tests were already carried out for this case by E. Olson and E. Titi in \cite{OlsonTiti2008}.  

%{\color{red}Make remark about equivalence to OT when low modes are initially the same}

\subsubsection{Mutual Direct-Replacement Intertwinement}\label{sect:computations:sync:mutual}

In our computational investigation of the mutual direct-replacement intertwinement we examined the effect of $\theta_1$ on the ability to self-synchronize. The results of these simulations can be seen in \cref{fig:theta trials}. Note that we consider the first $50$ Fourier modes to be used in defining the intertwining function. We also initialize $v_2(t_0) = P_N(v_1(t_0))$. We ultimately observe self-synchronization at an exponential rate in time for any choice of $\theta_1 \in [0,1]$ and that the error dynamics behave qualitatively the same across all values of $\tht_1$. Although we observe some deviation from the typical behavior in the error dynamics of the observed modes in contrast to the unobserved modes, these deviations do not go above $10^{-13}$ during the time simulated.

\begin{figure}

    \centering
    
    \includegraphics[width=0.75\linewidth]{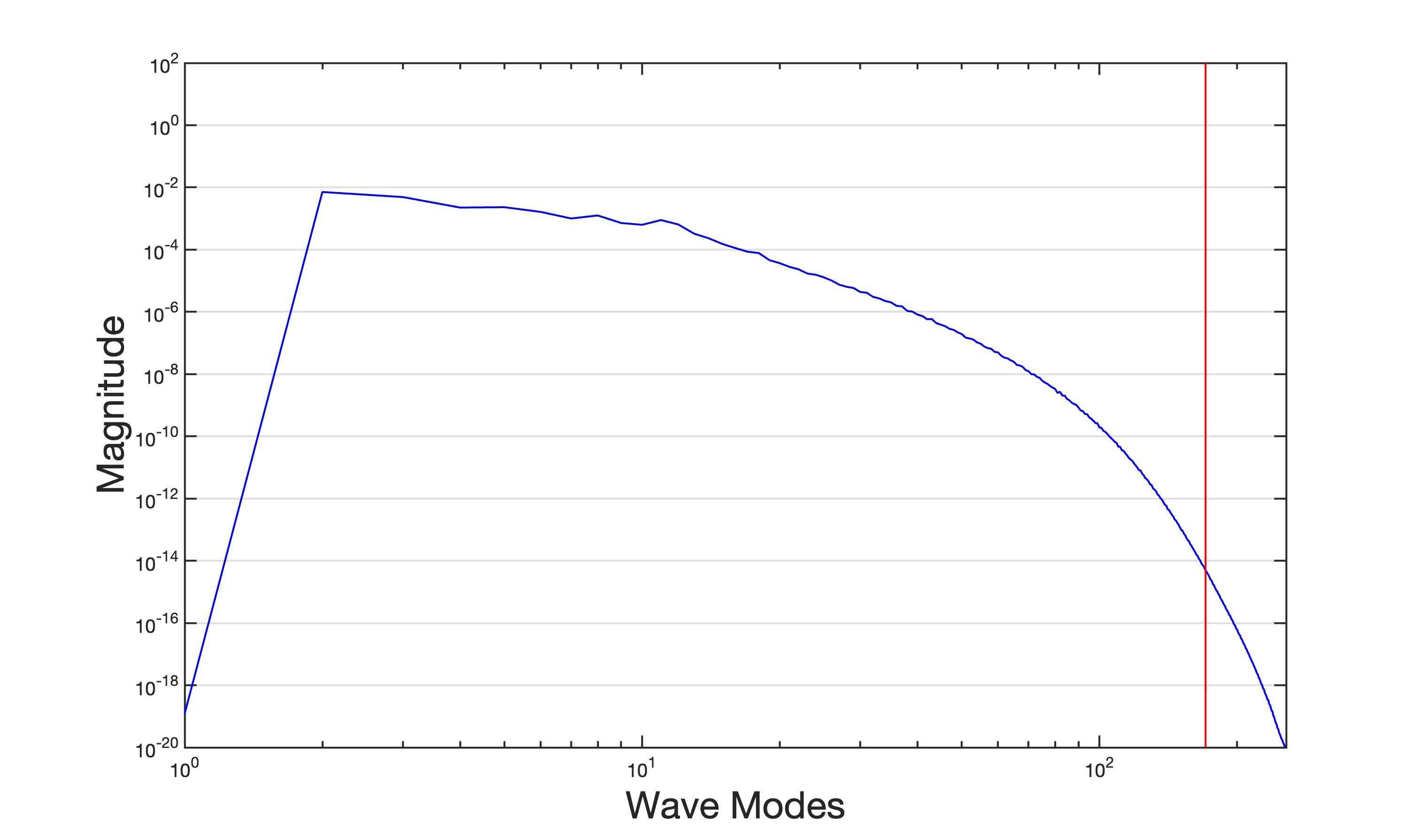}

    \caption{Energy spectrum of the initial data with $\nu = 0.005$, $G = 100,000$, and $\Delta t = 0.001$. The vertical red line is the 2/3 dealiasing cutoff as $\frac{2}{3}\frac{N}{2} = 170.\overline{6}.$}
            \label{fig:spectrum}

\end{figure}

\begin{figure}

\begin{subfigure}[b]{.49\textwidth}
\centering

        % \begin{minipage}
            \includegraphics[width=\textwidth]{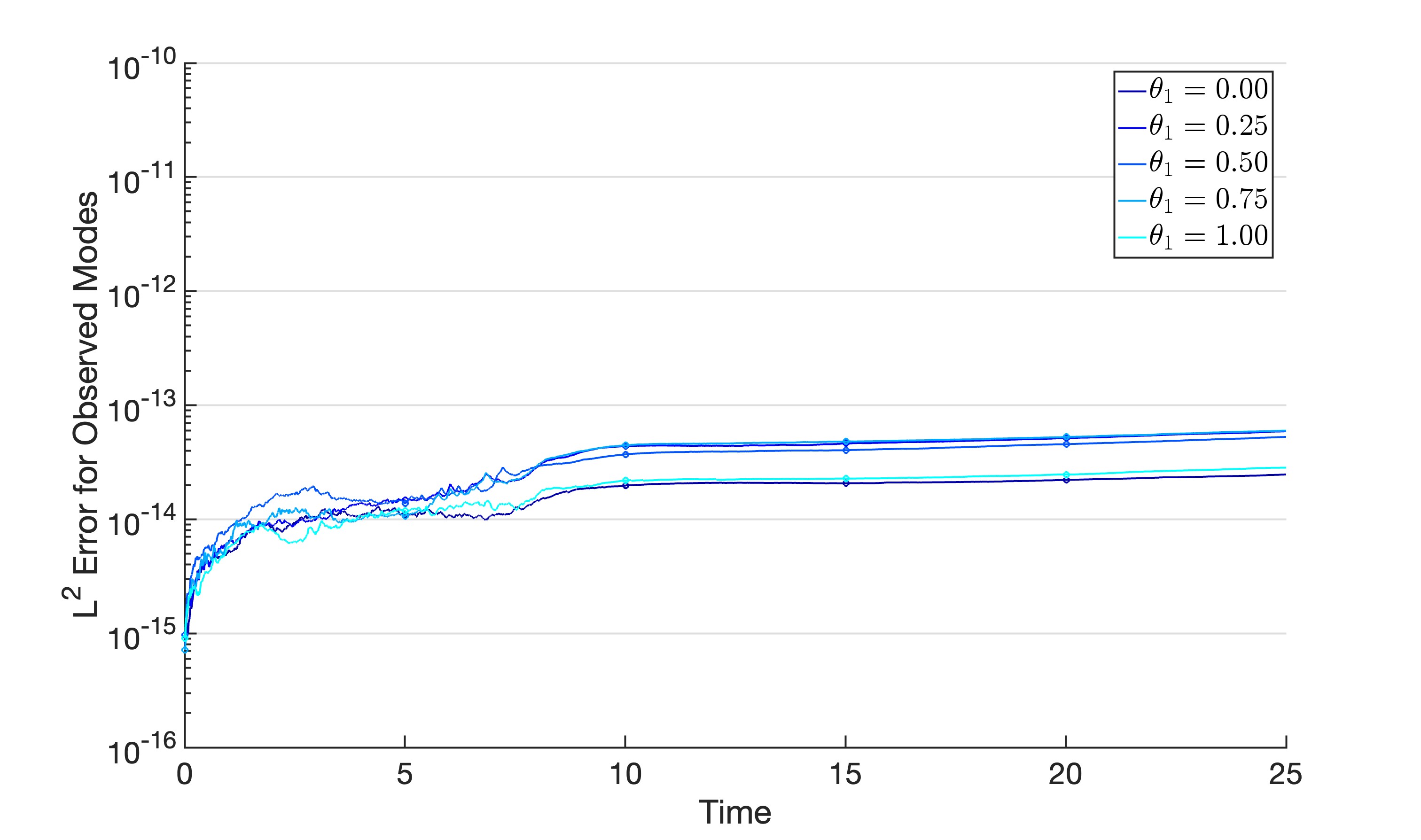}

    % \end{minipage}
    % \caption{Caption}
    \end{subfigure}
    \begin{subfigure}[b]{.49\textwidth}
\centering

        % \begin{minipage}
            \includegraphics[width=\textwidth]{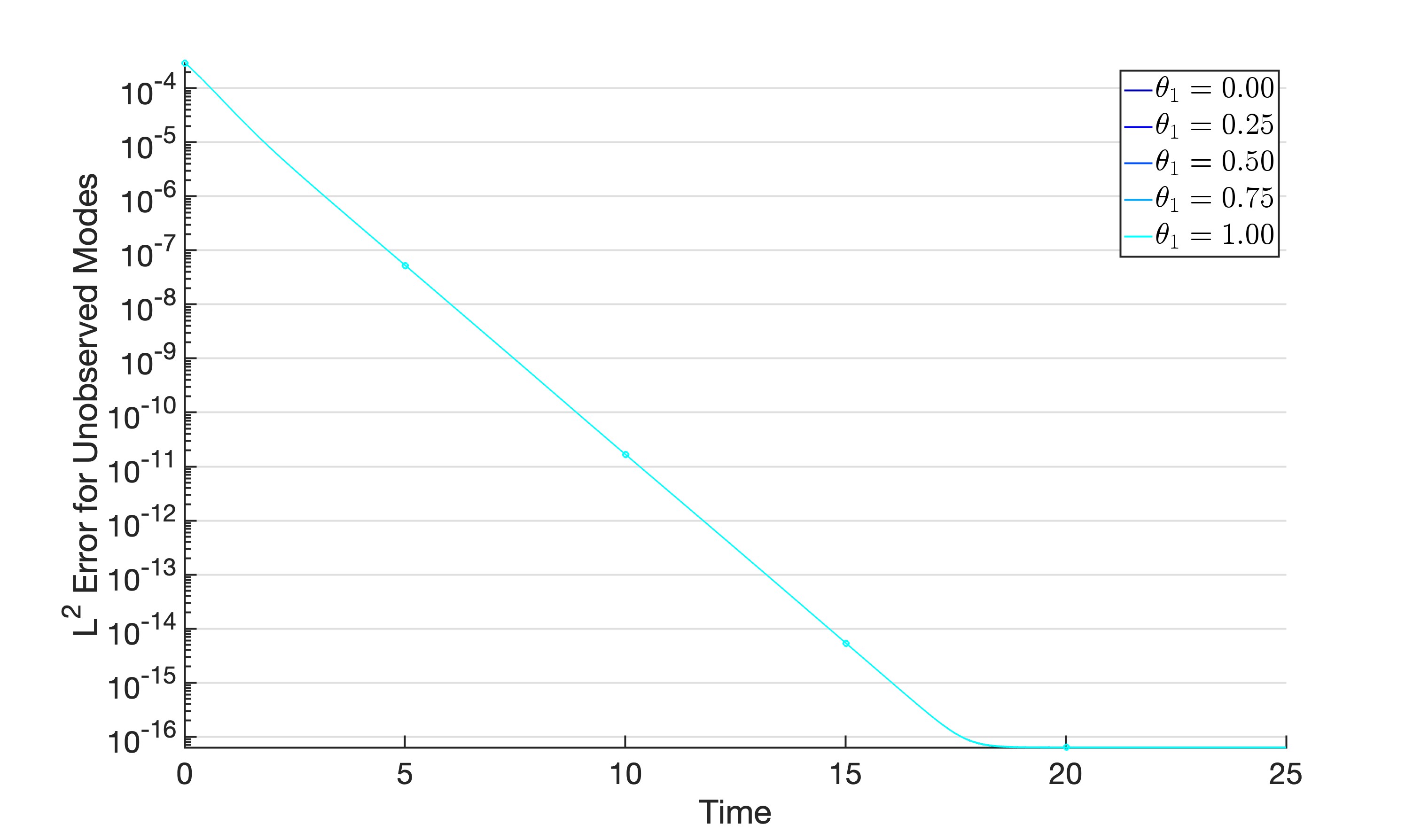}

    % \end{minipage}
    % \caption{Caption}
    \end{subfigure}
%     \begin{subfigure}[b]{.32\textwidth}
% \centering

%         % \begin{minipage}
%             \includegraphics[width=\textwidth]{}

%     % \end{minipage}
%     % \caption{Caption}
%     \end{subfigure}

    \caption{Error over time for different $\theta_1$ values for mutual direct-replacement. Here $v_2(t_0) = P_N(v_1(t_0))$.}
        \label{fig:theta trials}

\end{figure}

\subsubsection{Symmetric Direct-Replacement Intertwinement} \label{sect:computations:sync:symmetric}
Here we have implemented the equations using our scheme for the 2D NSE given in \eqref{scheme:NSE}, with the additional nonlinear terms computed explicitly. 
We utilized $g_1 = g_2 = f$, with $f$ being the time-independent forcing described in \cref{sect:numerical:tech}.

To investigate the convergence properties of symmetric direct-replacement intertwinements, we considered initial conditions for $v_2$ given as perturbations of $v_0^1$, the intitial condition for $v_1$. In particular, we initialized $v_2$ such that $v_0^2 = v_0^1 + \varepsilon$, where $\varepsilon$ is given in Fourier space as a matrix with random complex coefficients, $\hat{\varepsilon}_{\vec{k}}$, indexed by the wavenumber, $\vec{k} \in \mathbb{Z}^2$. The components of $\hat{\varepsilon}_{\vec{k}}$ are each individually drawn from a Gaussian distribution $\Re \hat{\varepsilon}_{\vec{k}}, \,\Im \hat{\varepsilon}_{\vec{k}}\sim \mathcal{N}(0,\sigma)$, and the global $\varepsilon$ is given by the discrete Fourier transform:
\begin{align}
    \varepsilon(x) = \frac{1}{N}\sum_{\vec{k}\in \mathbb{Z}^2_M} \hat{\varepsilon}_{\vec{k}} e^{i 2\pi \vec{k}\cdot\frac{\vec{x}}{N}},\label{def:noise}
\end{align}
where $\vec{k} \in \mathbb{Z}^2_M = \{\vec{k} \in \mathbb{Z}^2\mid |\vec{k}| \leq M\}$. Here $M$ is the number of observed Fourier modes, which for all simulation present in this section was $M = 50$. As we are considering observations of $\mathbb{R}$-valued solutions, we stipulate that $\hat{\varepsilon}_{-\vec{k}} = \overline{\hat{\varepsilon}_{\vec{k}}}$, where 
$\overline{\,\cdot\,}$ 
denotes complex conjugation. To enforce this, we draw random coefficients for $\hat{\varepsilon}_{\vec{k}}$ for half of the indices of $\vec{k}$ and determine the remaining components to enforce the reality condition. Additionally, we stipulate that $\varepsilon_{\vec{0}} = \vec{0}$, as the systems under consideration are mean-free.

The results of our simulations can be seen in \cref{fig: symmetric-D-R perturbation 1/2,fig: symmetric-D-R IC,fig: symmetric-D-R long time,fig: symmetric-D-R pertubration 1}. We note that while there appears to be some deviation in the low mode error in \cref{fig: symmetric-D-R long time,fig: symmetric-D-R pertubration 1,fig: symmetric-D-R perturbation 1/2}, the error is approximately $10^{-9}$ across all simulations using a consistent $v_0^2$. In the case when the initial condition for $v_2$ was varied for $\theta_1 = \frac{1}{2}$, we see in \cref{fig: symmetric-D-R IC} that the low modes error exhibit slow exponential decay that appears consistent across all trials with the initial condition determining the initial value of the error. 

We found in computations that the error behaved qualitatively similar for all values of $\theta_1,\theta_2 \geq 0$ satisfying $\theta_1 + \theta_2 = 1$. This can be seen in \cref{fig: symmetric-D-R long time}, in which we see that for $\theta_1 \in \{\frac{1}{2}, \frac{3}{4},1\}$ the error in the low modes remains approximately $10^{-9}$ for the simulated times and the high mode error exhibits rapid exponential decay to approximately $10^{-13}$ or below. We note that while the error looks qualitatively similar, ignoring small deviations below $10^{-13}$, the solutions themselves appear qualitatively different for varying $\theta_1, \theta_2$ values. In particular, we note that when $\theta_1 = 1$, the high modes of both $v_1$ and $v_2$ become identically zero as time evolves. This was also seen for all intertwinements that were small perturbations of $\theta_1 = 1$, which can be seen in \cref{fig: symmetric-D-R pertubration 1}.

\begin{figure}

\begin{subfigure}[b]{.49\textwidth}
\centering

        % \begin{minipage}
            \includegraphics[width=\textwidth]{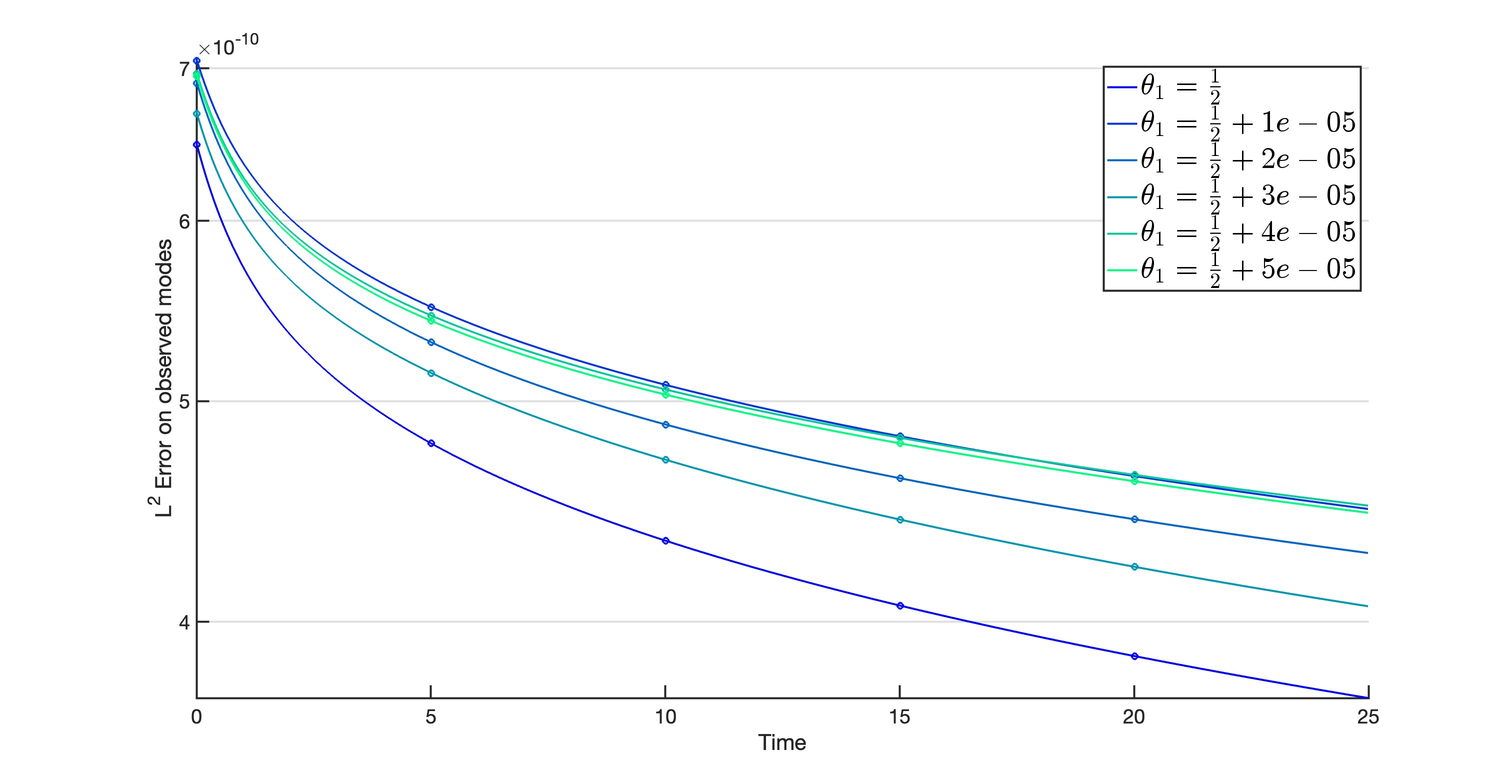}

    % \end{minipage}
    % \caption{Caption}
    \end{subfigure}
    \begin{subfigure}[b]{.49\textwidth}
\centering

        % \begin{minipage}
            \includegraphics[width=\textwidth]{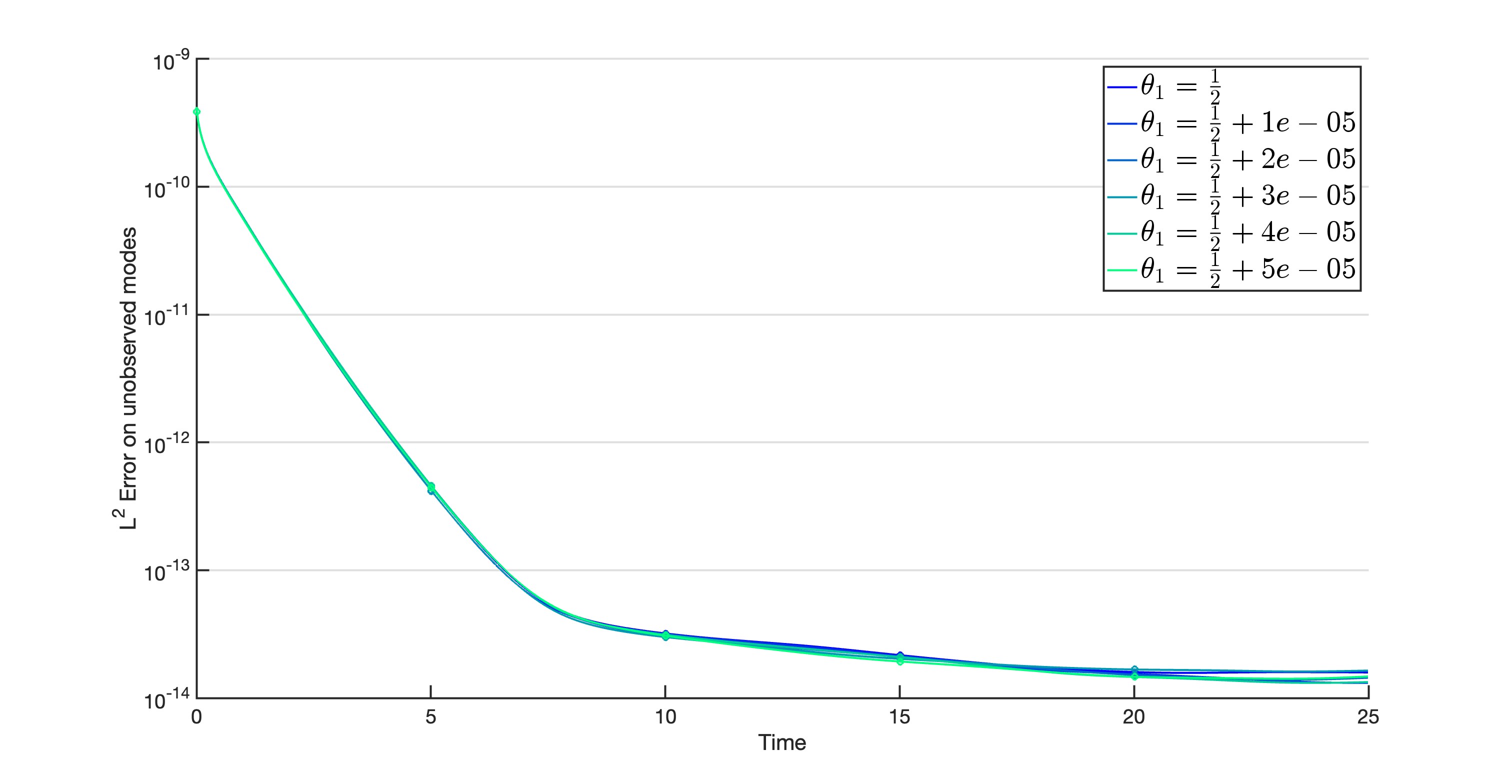}

    % \end{minipage}
    % \caption{Caption}
    \end{subfigure}
%     \begin{subfigure}[b]{.32\textwidth}
% \centering

%         % \begin{minipage}
%             \includegraphics[width=\textwidth]{error_symmetric_perturb_around_5_all_modes.jpg}

%     % \end{minipage}
%     % \caption{Caption}
%     \end{subfigure}
    \caption{Error over time for symmetric direct-replacement intertwinements with $\theta_1 = \frac{1}{2} + c$, where $c$ is given in the legend as a multiple of $\Gr^{-1} = 10^{-5}$. Here the initial condition for $v_2$ was given by $v_0^2 = v_0^1 + \varepsilon$, where $\varepsilon$ is given as in \cref{def:noise} with $\sigma^2 = 10^{-10}$.}

    \label{fig: symmetric-D-R perturbation 1/2}
\end{figure}

\begin{figure}

\begin{subfigure}[b]{.49\textwidth}
\centering

        % \begin{minipage}
            \includegraphics[width=\textwidth]{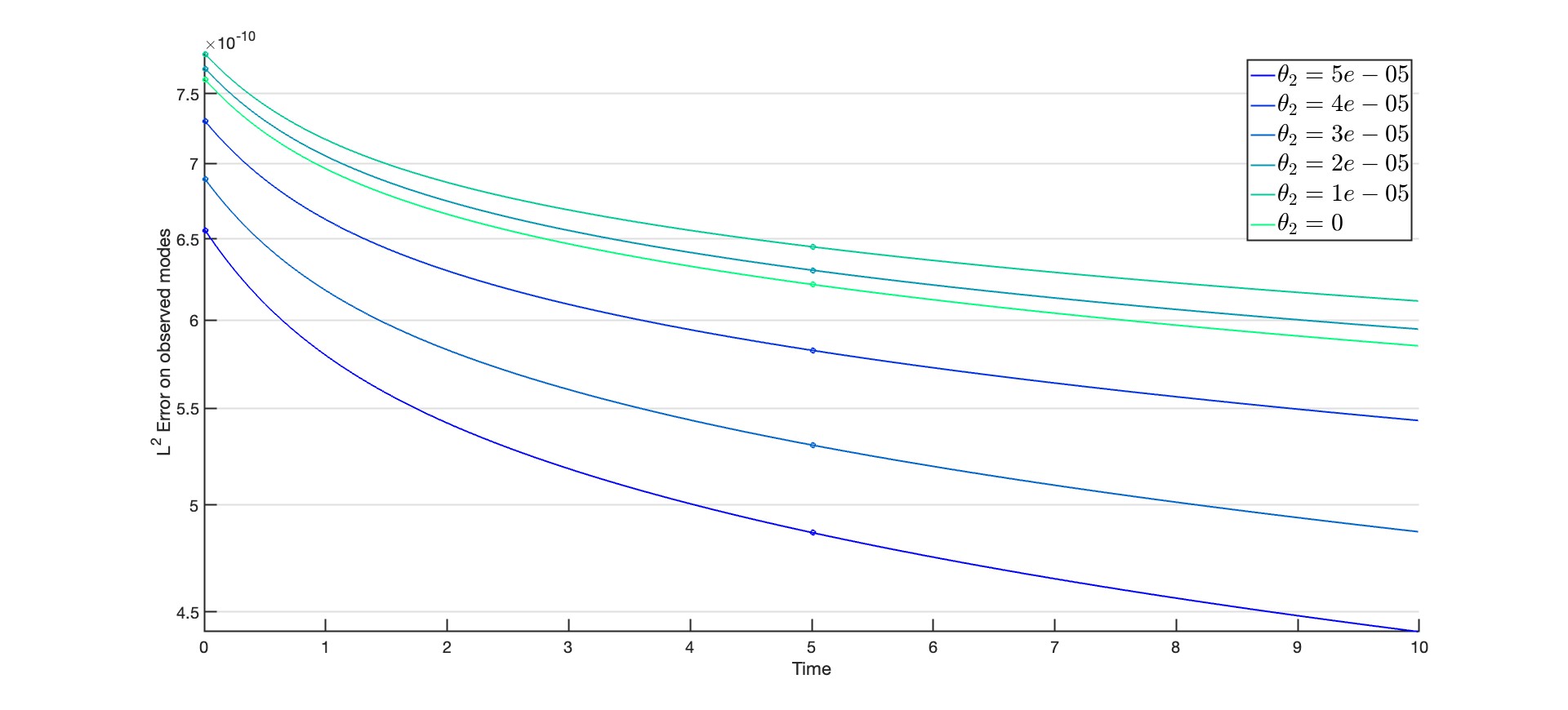}

    % \end{minipage}
    % \caption{Caption}
    \end{subfigure}
    \begin{subfigure}[b]{.49\textwidth}
\centering

        % \begin{minipage}
            \includegraphics[width=\textwidth]{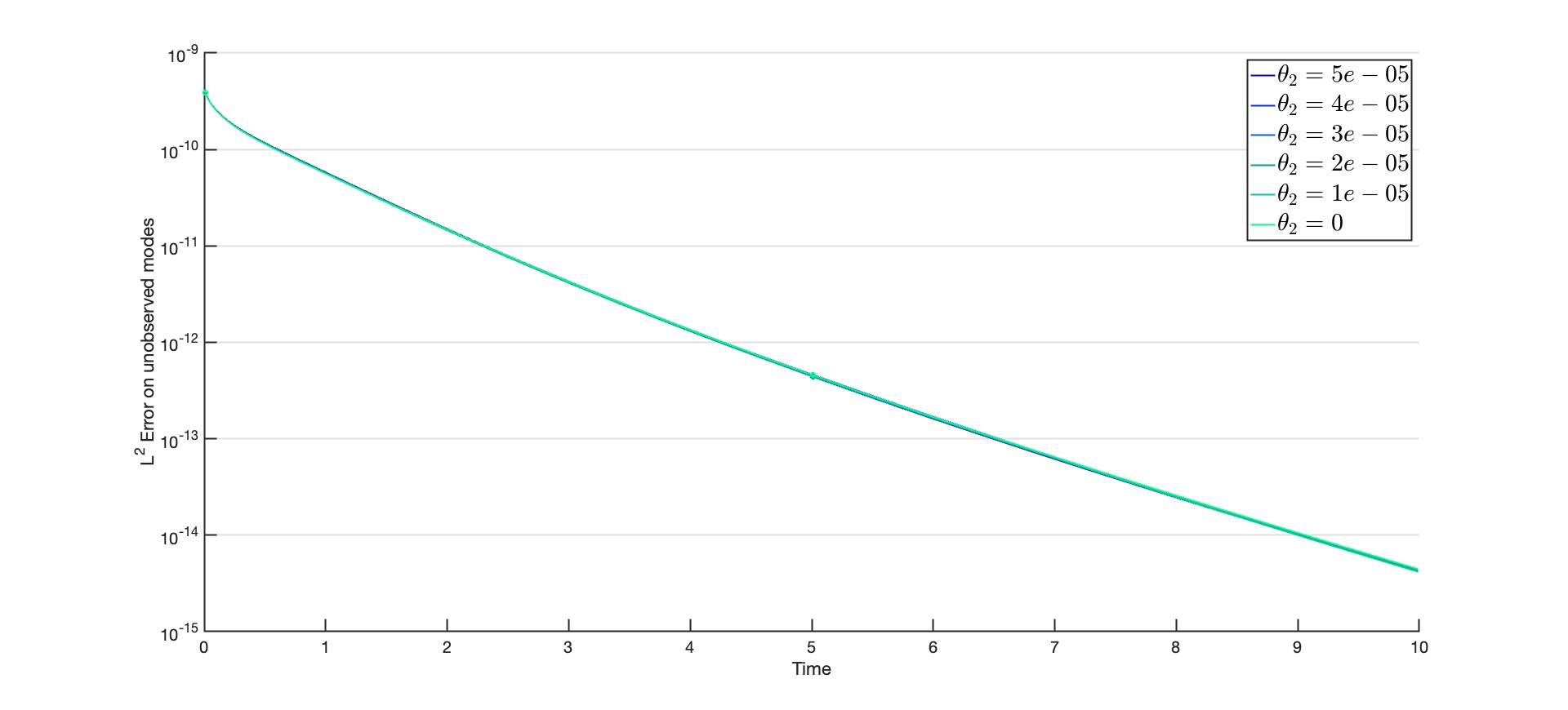}

    % \end{minipage}
    % \caption{Caption}
    \end{subfigure}
%     \begin{subfigure}[b]{.32\textwidth}
% \centering

%         % \begin{minipage}
%             \includegraphics[width=\textwidth]{error_symmetric_perturb_around_1_all_modes.jpg}

%     % \end{minipage}
%     % \caption{Caption}
%     \end{subfigure}
    \caption{Error over time for symmetric direct-replacement intertwinements with $\theta_1 = 1 - c$, where $c$ is given in the legend as a multiple of $\Gr^{-1} = 10^{-5}$. Here the initial condition for $v_2$ was given by $v_0^2 = v_0^1 + \varepsilon$, where $\varepsilon$ is given as in \eqref{def:noise} with $\sigma^2 = 10^{-10}$.}

    \label{fig: symmetric-D-R pertubration 1}
\end{figure}

\begin{figure}

\begin{subfigure}[b]{.49\textwidth}
\centering

        % \begin{minipage}
            \includegraphics[width=\textwidth]{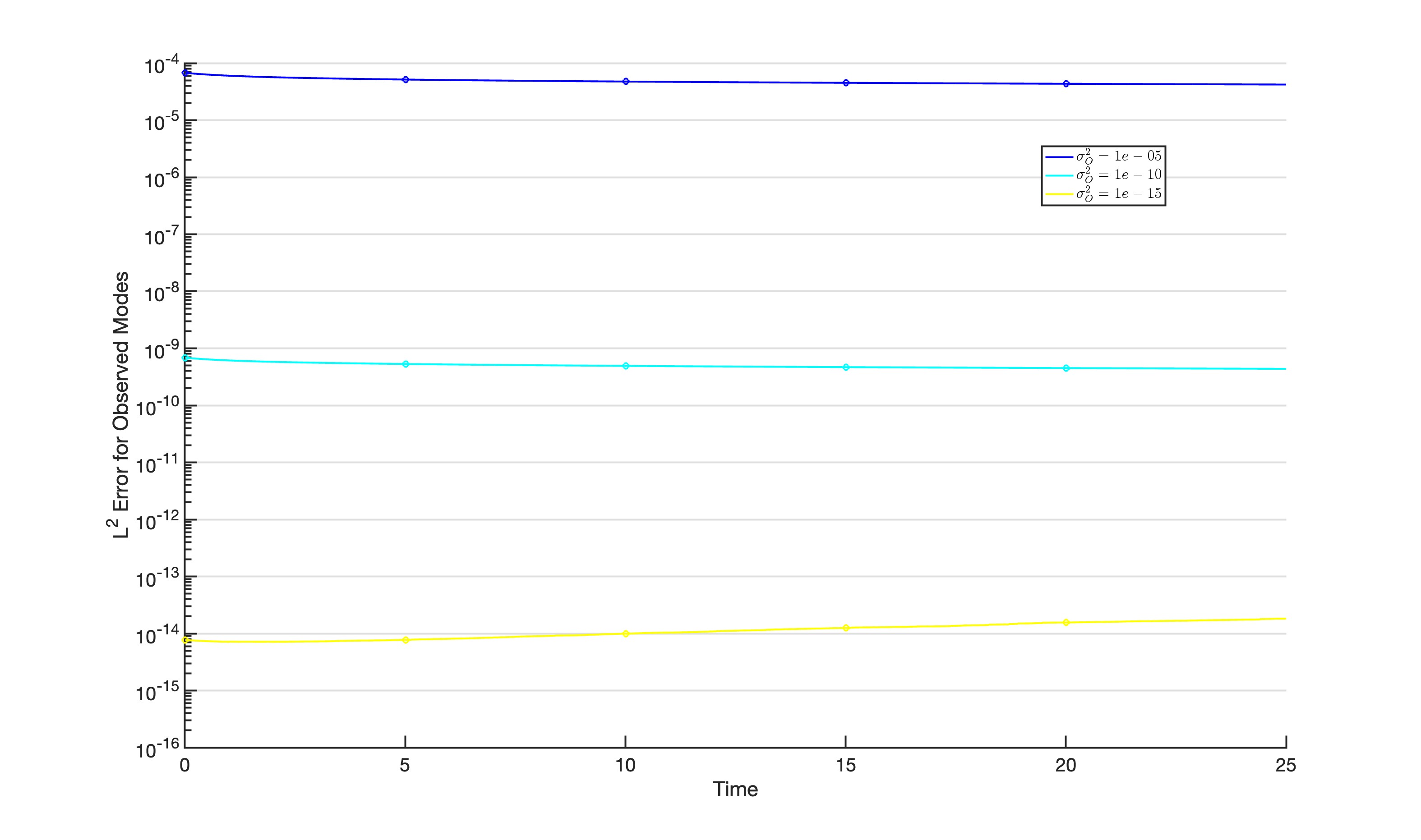}

    % \end{minipage}
    % \caption{Caption}
    \end{subfigure}
    \begin{subfigure}[b]{.49\textwidth}
\centering

        % \begin{minipage}
            \includegraphics[width=\textwidth]{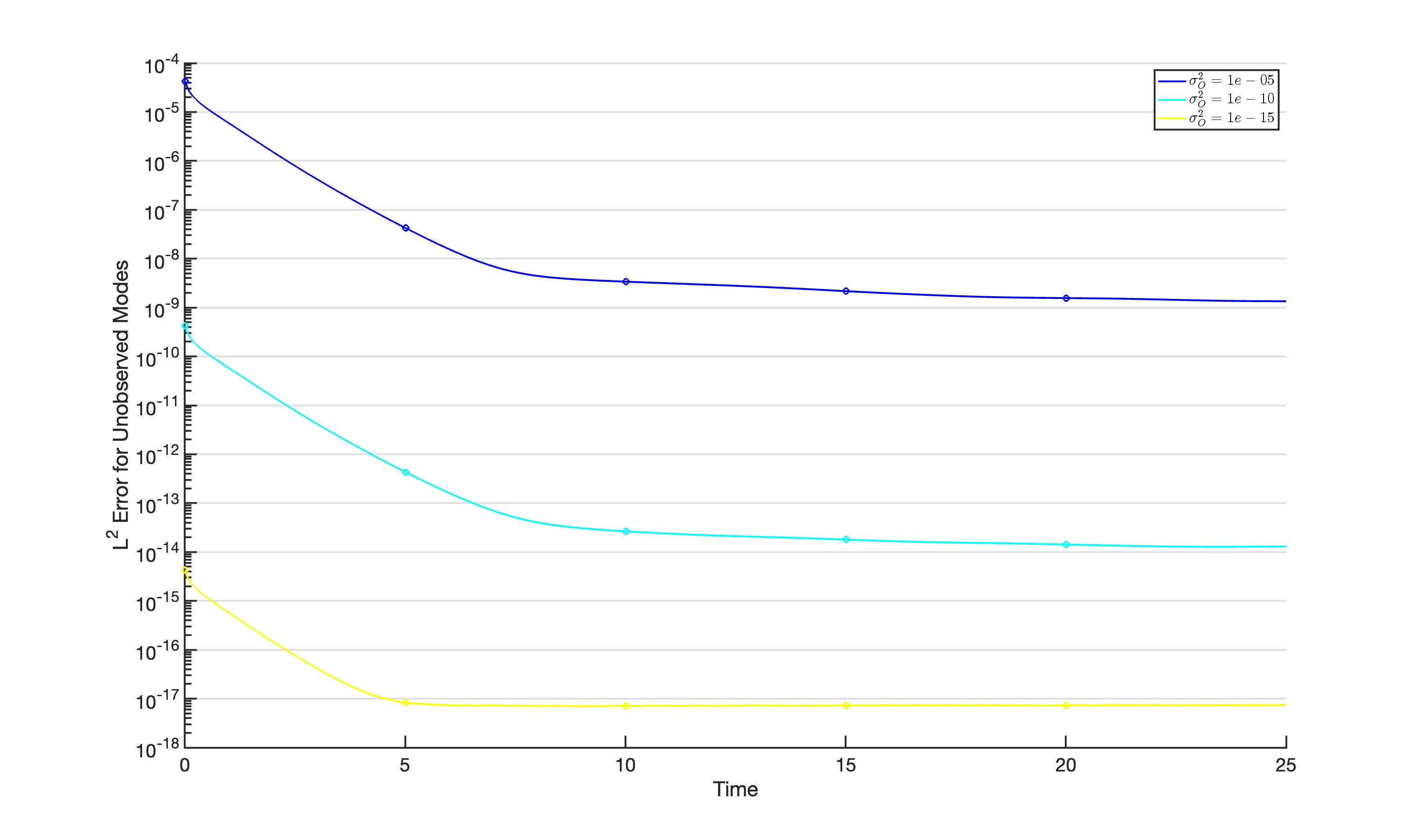}

    % \end{minipage}
    % \caption{Caption}
    \end{subfigure}
%     \begin{subfigure}[b]{.32\textwidth}
% \centering

%         % \begin{minipage}
%             \includegraphics[width=\textwidth]{}

%     % \end{minipage}
%     % \caption{Caption}
%     \end{subfigure}
    \caption{Error over time for symmetric direct-replacement intertwinements with $\theta_1 = \frac{1}{2}$. Here the initial condition for $v_2$ was given by $v_0^2 = v_0^1 + \varepsilon$, where $\varepsilon$ is given as in \eqref{def:noise} with $\sigma^2$ as in the legend.}

    \label{fig: symmetric-D-R IC}
\end{figure}

\begin{figure}

\begin{subfigure}[b]{.49\textwidth}
\centering

        % \begin{minipage}
            \includegraphics[width=\textwidth]{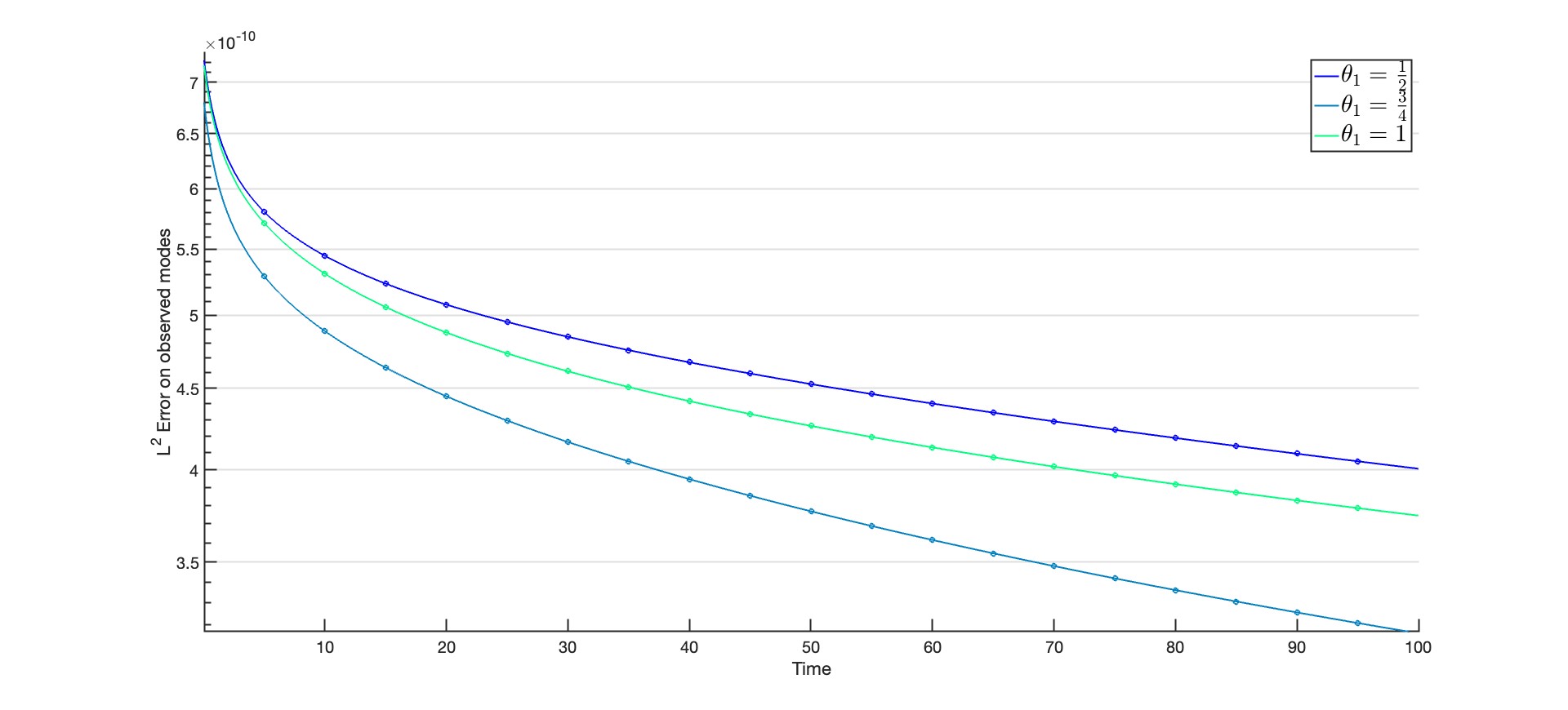}

    % \end{minipage}
    % \caption{Caption}
    \end{subfigure}
    \begin{subfigure}[b]{.49\textwidth}
\centering

        % \begin{minipage}
            \includegraphics[width=\textwidth]{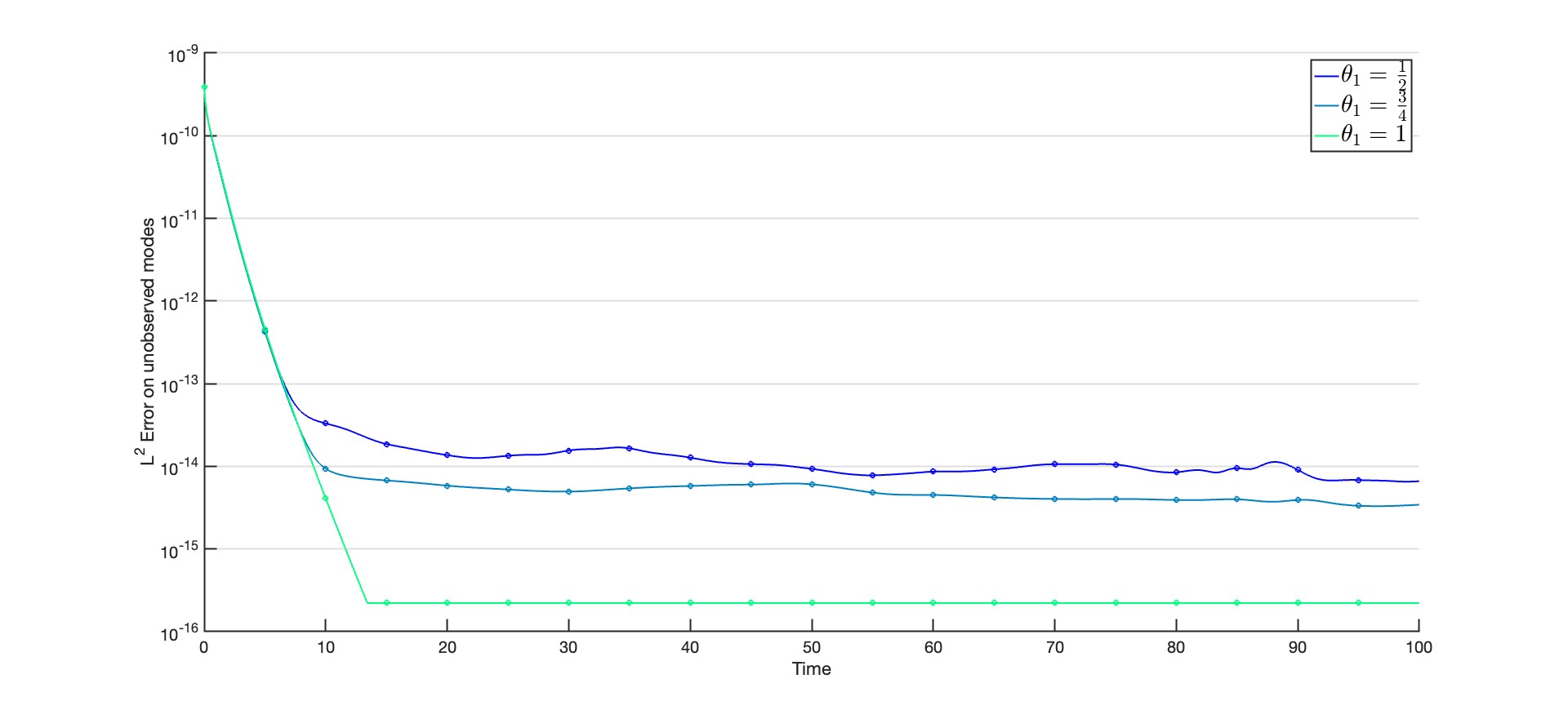}

    % \end{minipage}
    % \caption{Caption}
    \end{subfigure}
%     \begin{subfigure}[b]{.32\textwidth}
% \centering

%         % \begin{minipage}
%             \includegraphics[width=\textwidth]{error_symmetric_long_all_modes.jpg}

%     % \end{minipage}
%     % \caption{Caption}
%     \end{subfigure}
    \caption{Error over time for symmetric direct-replacement intertwinements with $\theta_1$ as given in the legend. Here the initial condition for $v_2$ was given by $v_0^2 = v_0^1 + \varepsilon$, where $\varepsilon$ is given as in \cref{def:noise} with $\sigma^2 = 10^{-10}$.}

    \label{fig: symmetric-D-R long time}
\end{figure}

% In our computational investigation of the mutual direct-replacement intertwinement we examined the effect of $\theta_1$ on the ability to self-synchronize. The results of these simulations can be seen in \cref{fig:theta trials}. Note that we consider the first $50$ Fourier modes to be used in defining the intertwining function. We also initialize $v_2(t_0) = P_N(v_1(t_0))$. We ultimately observe self-synchronization at an exponential rate in time for any choice of $\theta_1 \in [0,1]$ and that the error dynamics behave qualitatively the same across all values of $\tht_1$. Although we observe some deviation from the typical behavior in the error dynamics of the observed modes in contrast to the unobserved modes, these deviations do not go above $10^{-13}$ during the time simulated.

\subsection{Nudging Intertwinement}\label{sect:computations}

In this section, we describe an implementation of the  nudging intertwinement equations \eqref{eq:intertwined:nudge} and present the subsequent results. We focus only on the \textit{mutual nudging intertwinement}, i.e., $M=\begin{pmatrix}-\mu_1&\mu_1\\ \mu_2&-\mu_2\end{pmatrix}$ and the \textit{symmetric nudging intertwinement}, i.e., $M=\begin{pmatrix}-\mu_1&\mu_2\\\mu_2&-\mu_1\end{pmatrix}$, where $\mu_1,\mu_2\geq0$. For these intertwinements, we again implement the intertwined system according to \eqref{scheme:NSE}, but with the additional terms coming from the intertwining functions computed explicitly. We simulated these equations for various instances of the intertwining matrix, $M$, using spatial resolution, $N = 2^9$ and viscosity $\nu = 0.0005$. We again consider $g_1 = g_2 = f$, with $f$ being the time-independent forcing described in \cref{sect:numerical:tech}.

To initialize our equations we used a solution to 2D NSE that had been spun up from initial data $0$ up to time $10,000$. To generate the second initial profile we evolved this solution out an additional $100$ units of time at which point we found that the solutions appeared to be sufficiently decorrelated. This decorrelation can be observed in the error at the initial times in \cref{fig:mutual nudging error}, \cref{fig:mutual nudging error zoomed}, and \cref{fig:symmetric nudging error zoomed}.

\subsubsection{Mutual Nudging Intertwinement}\label{sect:computations:mutual}

For all of our simulations we fixed $\mu_1 = 50$ and varied $0\leq \mu_2\leq \mu_1$. We found that when utilizing nonnegative $\mu_2$ that all of the simulations behaved approximately the same way. In each case, we obtained exponential convergence of $v_1$ to $v_2$ at approximately the same rate. We observe the exponential decay in the error, split into the observed and unobserved modes, in \cref{fig:mutual nudging error}, where the plots are nearly indistinguishable. Upon zooming in on the initial development of the error, we see in \cref{fig:mutual nudging error zoomed} that the error converges exponentially in the initial period at rates which increase as $\mu_2$ increases in the initial period, before they transitions to a slower, but nevertheless exponential, decay rate. 

\begin{figure}

\begin{subfigure}[b]{.49\textwidth}
\centering

        % \begin{minipage}
            \includegraphics[width=\textwidth]{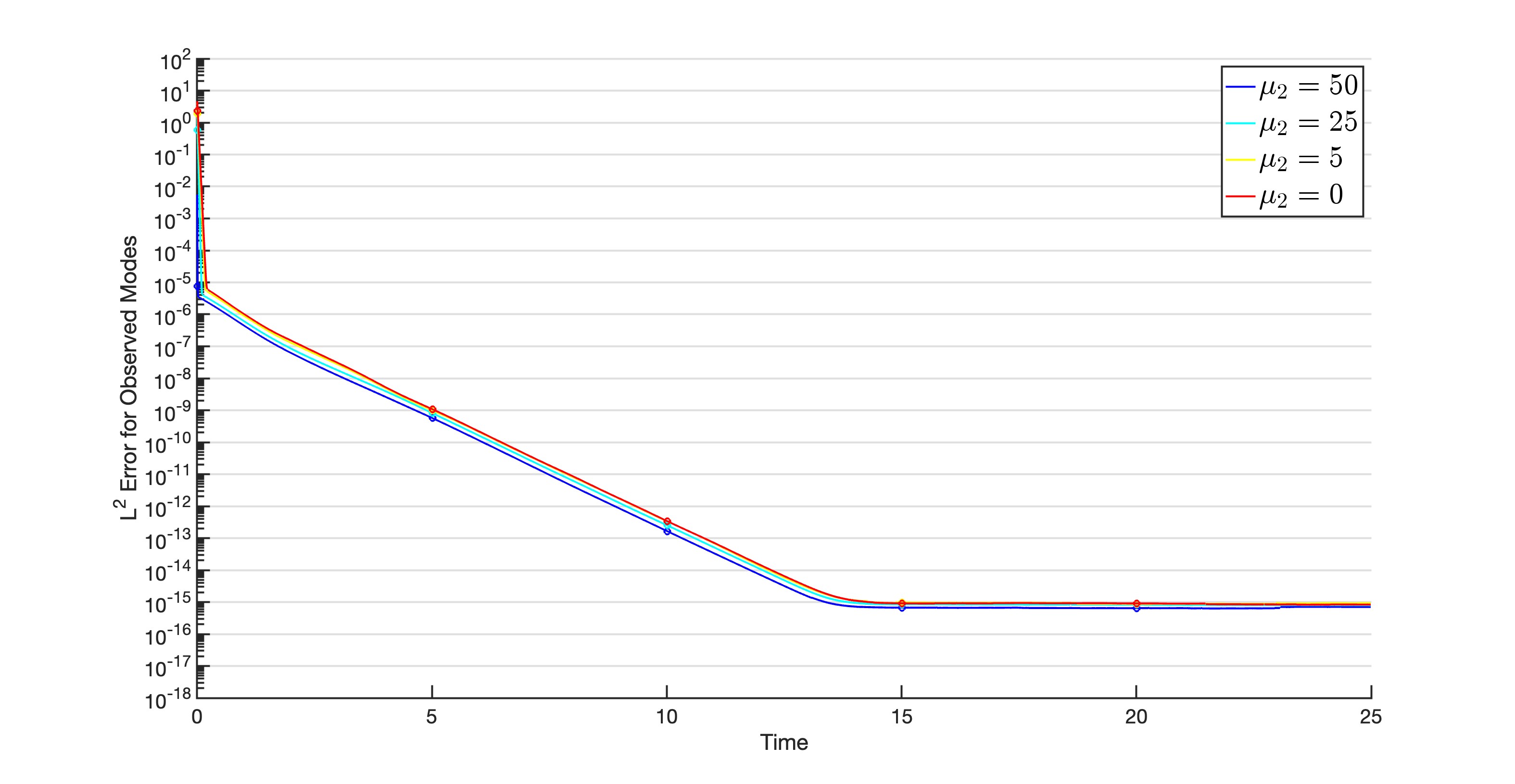}

    % \end{minipage}
    % \caption{Caption}
    \end{subfigure}
    \begin{subfigure}[b]{.49\textwidth}
\centering

        % \begin{minipage}
            \includegraphics[width=\textwidth]{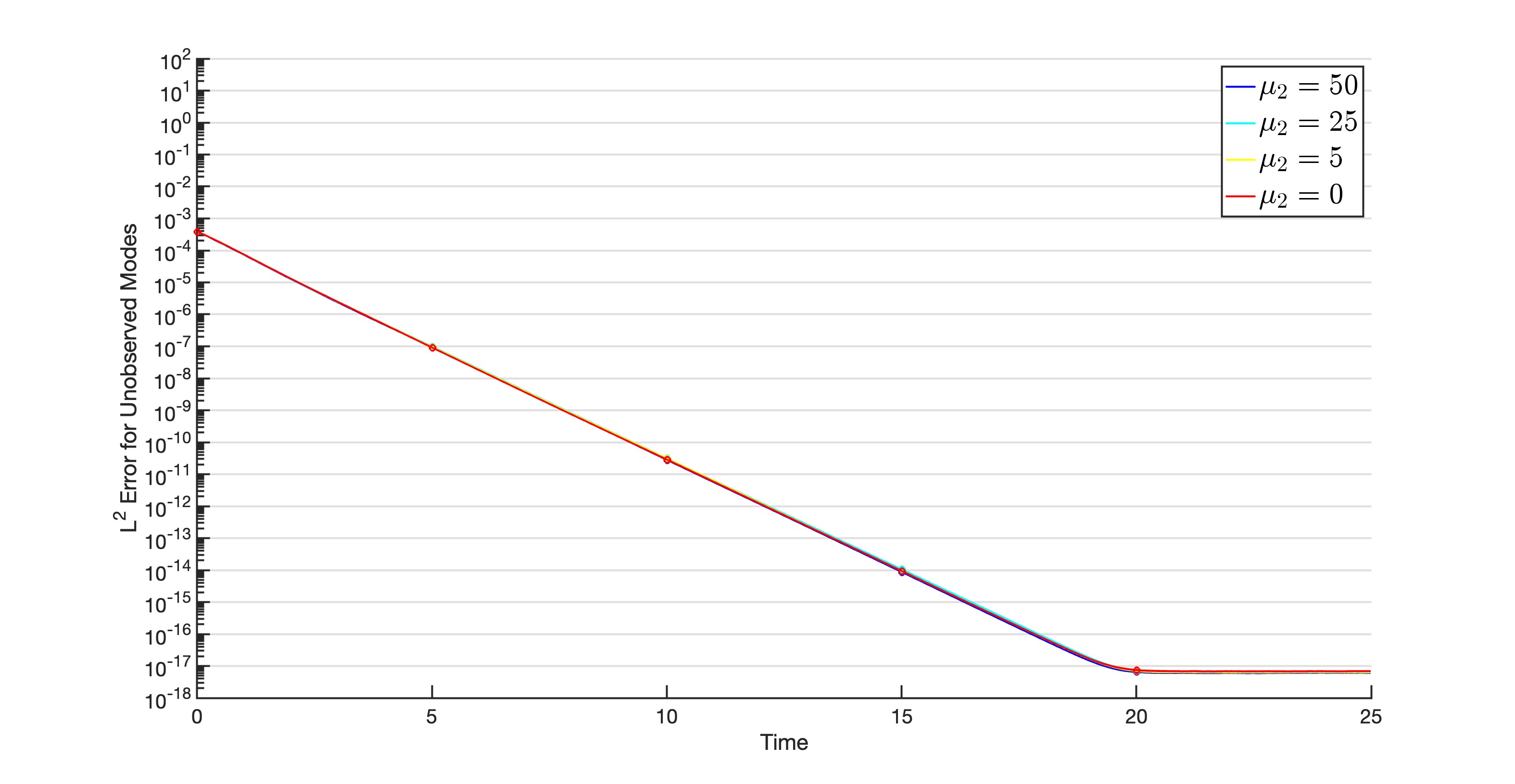}

    % \end{minipage}
    % \caption{Caption}
    \end{subfigure}
%     \begin{subfigure}[b]{.32\textwidth}
% \centering

%         % \begin{minipage}
%             \includegraphics[width=\textwidth]{}

%     % \end{minipage}
%     % \caption{Caption}
%     \end{subfigure}
    \caption{Error over time for mutual nudging intertwinements with $\mu_1 = 50$ and various non-negative values for $\mu_2$.}

    \label{fig:mutual nudging error}
\end{figure}

\begin{figure}

\begin{subfigure}[b]{.49\textwidth}
\centering

        % \begin{minipage}
            \includegraphics[width=\textwidth]{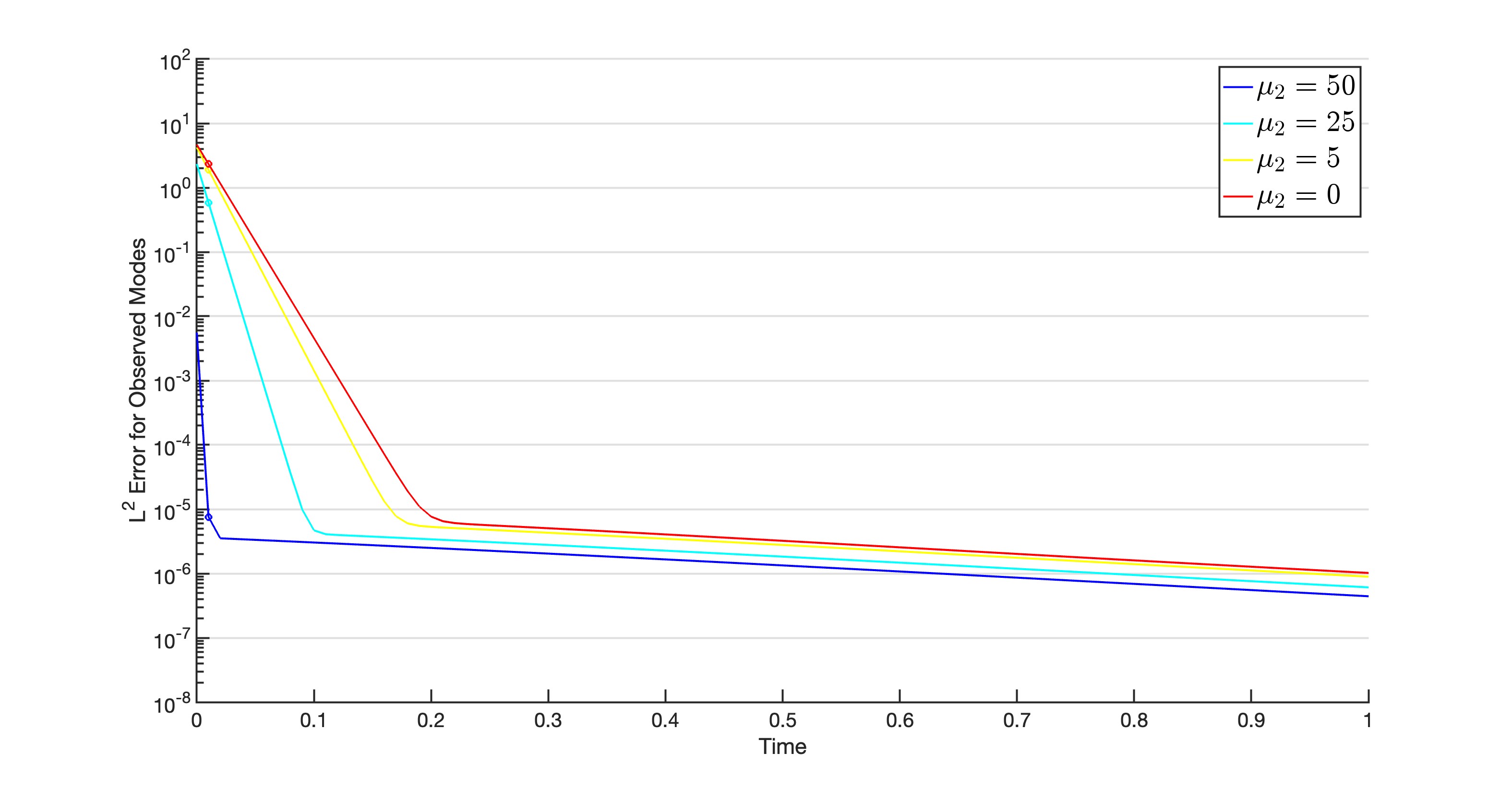}

    % \end{minipage}
    % \caption{Caption}
    \end{subfigure}
    \begin{subfigure}[b]{.49\textwidth}
\centering

        % \begin{minipage}
            \includegraphics[width=\textwidth]{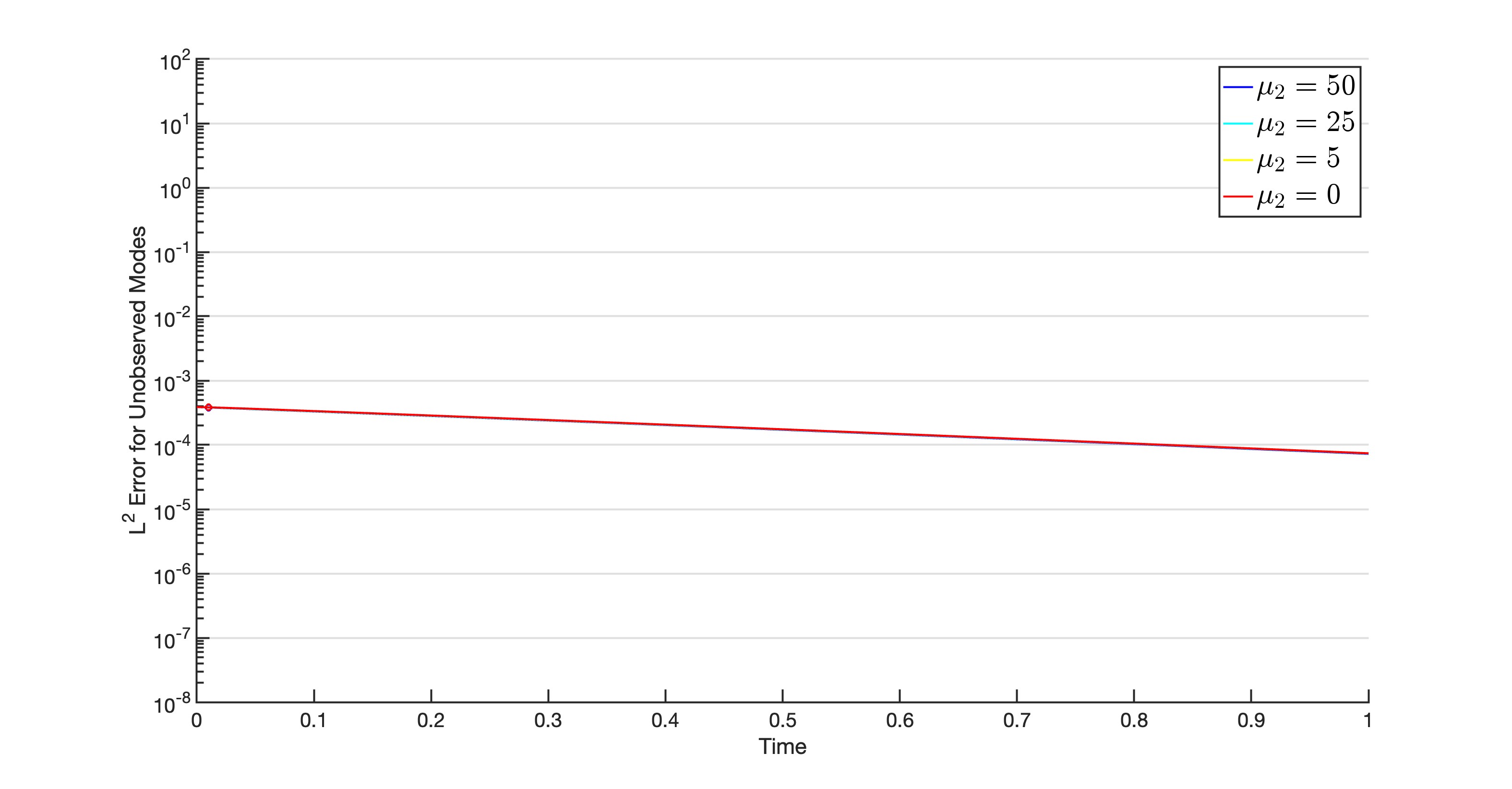}

    % \end{minipage}
    % \caption{Caption}
    \end{subfigure}
%     \begin{subfigure}[b]{.32\textwidth}
% \centering

%         % \begin{minipage}
%             \includegraphics[width=\textwidth]{}

%     % \end{minipage}
%     % \caption{Caption}
%     \end{subfigure}

\caption{Error over time for mutual nudging intertwinements with $\mu_1 = 50$ and various non-negative values for $\mu_2$. Zoomed in plot showing initial error development for \cref{fig:mutual nudging error}}
    \label{fig:mutual nudging error zoomed}
\end{figure}

\subsubsection{Symmetric Nudging Intetwinement}\label{sect:computations:symmetric}

For all of our simulations, we once again fixed $\mu_1 = 50$ and varied $0\leq \mu_2\leq \mu_1$. We found that all of the simulations behaved approximately the same, except for when $\mu_1 = \mu_2$, which is precisely when the smallest value of the eigenvalues of the intertwining matrix is zero. 
We see that in each case, except when $\mu_1 = \mu_2$, we obtained exponential convergence of $v_1$ to $v_2$ at approximately the same rate. In the case when $\mu_1 = \mu_2$ we nevertheless still obtained exponential synchronization between $v_1$ and $v_2$, but it occurred at a different rate than the other cases; this quality is found in both the ``observed" and ``unobserved" modes. The exponential decay in the error, split into the observed and unobserved modes, is presented in \cref{fig:symmetric nudging error}, where the plots are once again nearly indistinguishable. Upon zooming in on the early development of the error (see \cref{fig:symmetric nudging error zoomed}), we see that in each method, the cases where $\mu_2 \neq \mu_1$ initially exhibit a different rate of synchronization before quickly transitioning to a slower, but nevertheless exponential, decay rate.

\begin{figure}

\begin{subfigure}[b]{.49\textwidth}
\centering

        % \begin{minipage}
            \includegraphics[width=\textwidth]{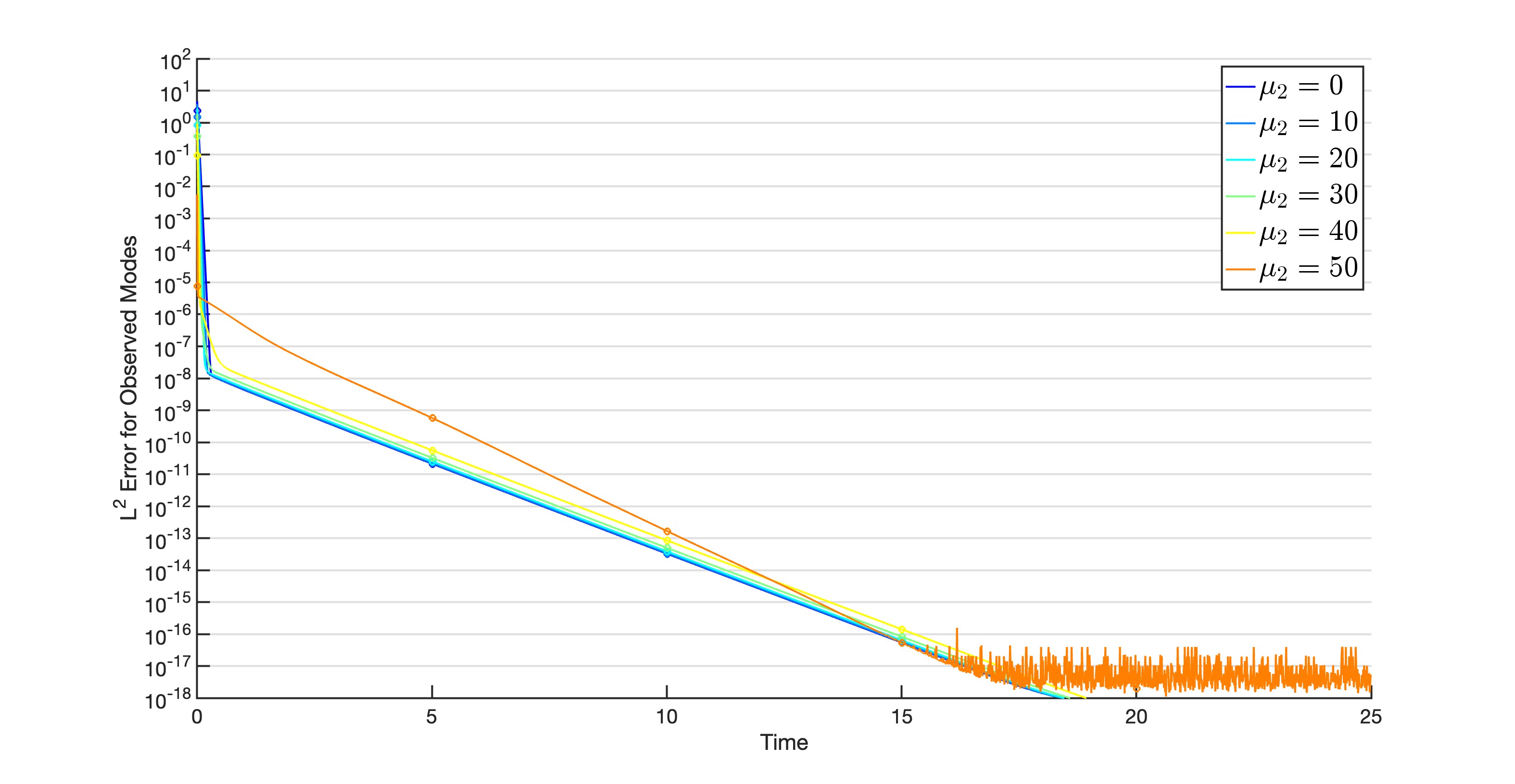}

    % \end{minipage}
    % \caption{Caption}
    \end{subfigure}
    \begin{subfigure}[b]{.49\textwidth}
\centering

        % \begin{minipage}
            \includegraphics[width=\textwidth]{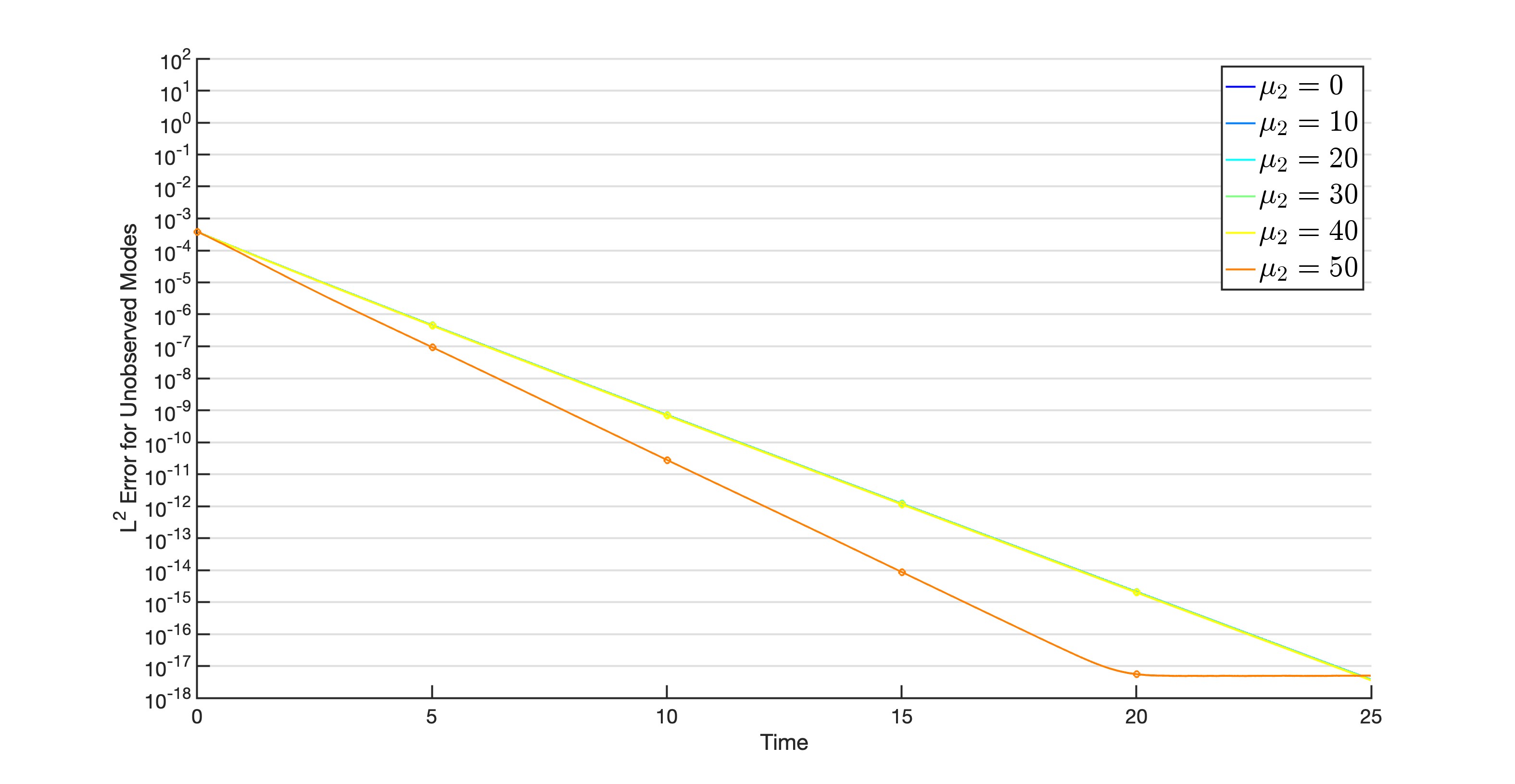}

    % \end{minipage}
    % \caption{Caption}
    \end{subfigure}
%     \begin{subfigure}[b]{.32\textwidth}
% \centering

%         % \begin{minipage}
%             \includegraphics[width=\textwidth]{}

%     % \end{minipage}
%     % \caption{Caption}
%     \end{subfigure}
    \caption{Error over time for symmetric nudging intertwinements with $\mu_1 = 50$ and various non-negative values for $\mu_2$.}

    \label{fig:symmetric nudging error}
    % \caption{Error over time for mutual synchronization schemes with $\theta_1 = \frac{1}{2}$ initializes with $v_2(t_0) = P_N(v_1(t_0) + \eta)$, with $\eta \sim N(0, \sigma_O)$. Here $N = 50$ observed modes.}
\end{figure}

\begin{figure}

\begin{subfigure}[b]{.49\textwidth}
\centering

        % \begin{minipage}
            \includegraphics[width=\textwidth]{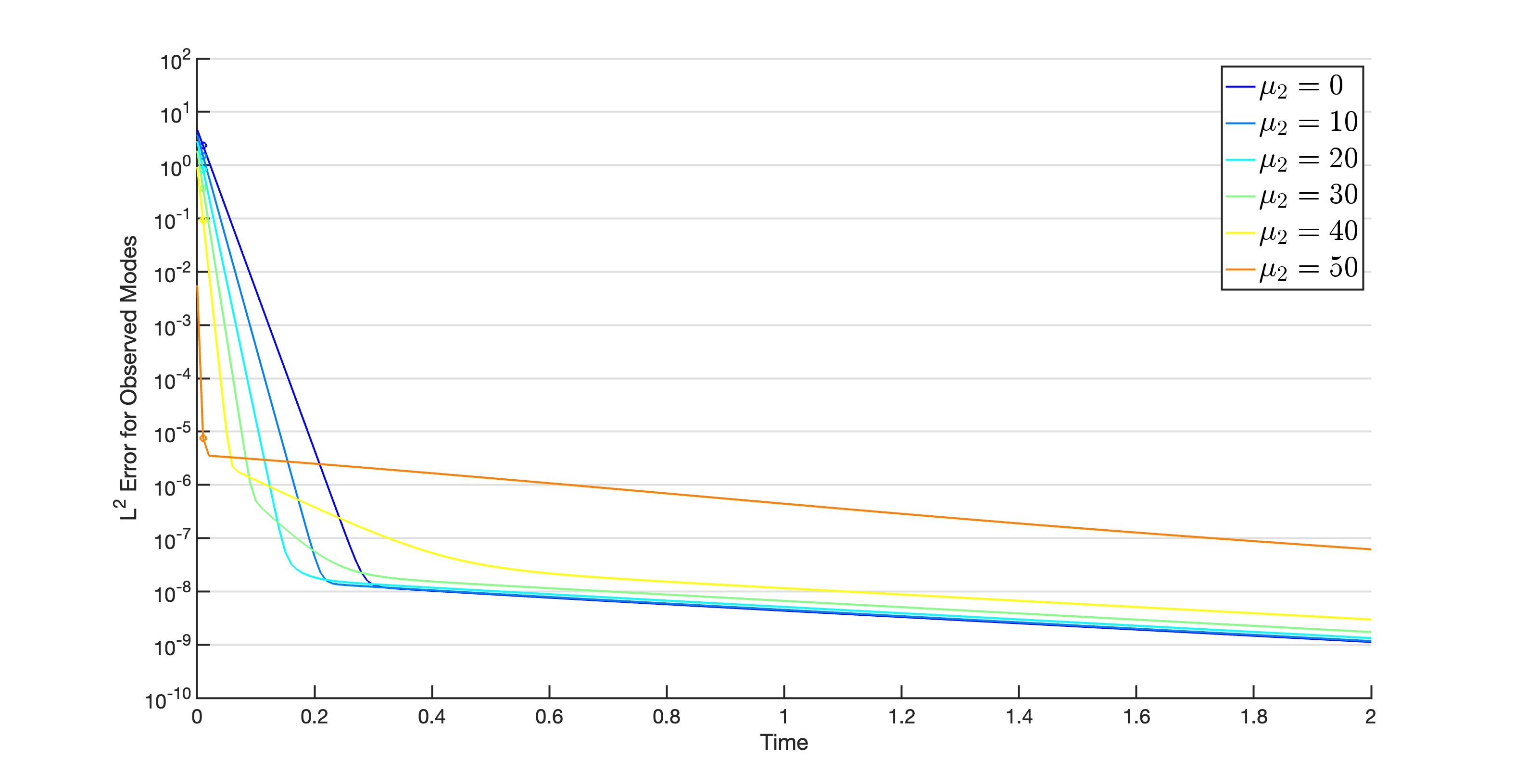}

    % \end{minipage}
    % \caption{Caption}
    \end{subfigure}
    \begin{subfigure}[b]{.49\textwidth}
\centering

        % \begin{minipage}
            \includegraphics[width=\textwidth]{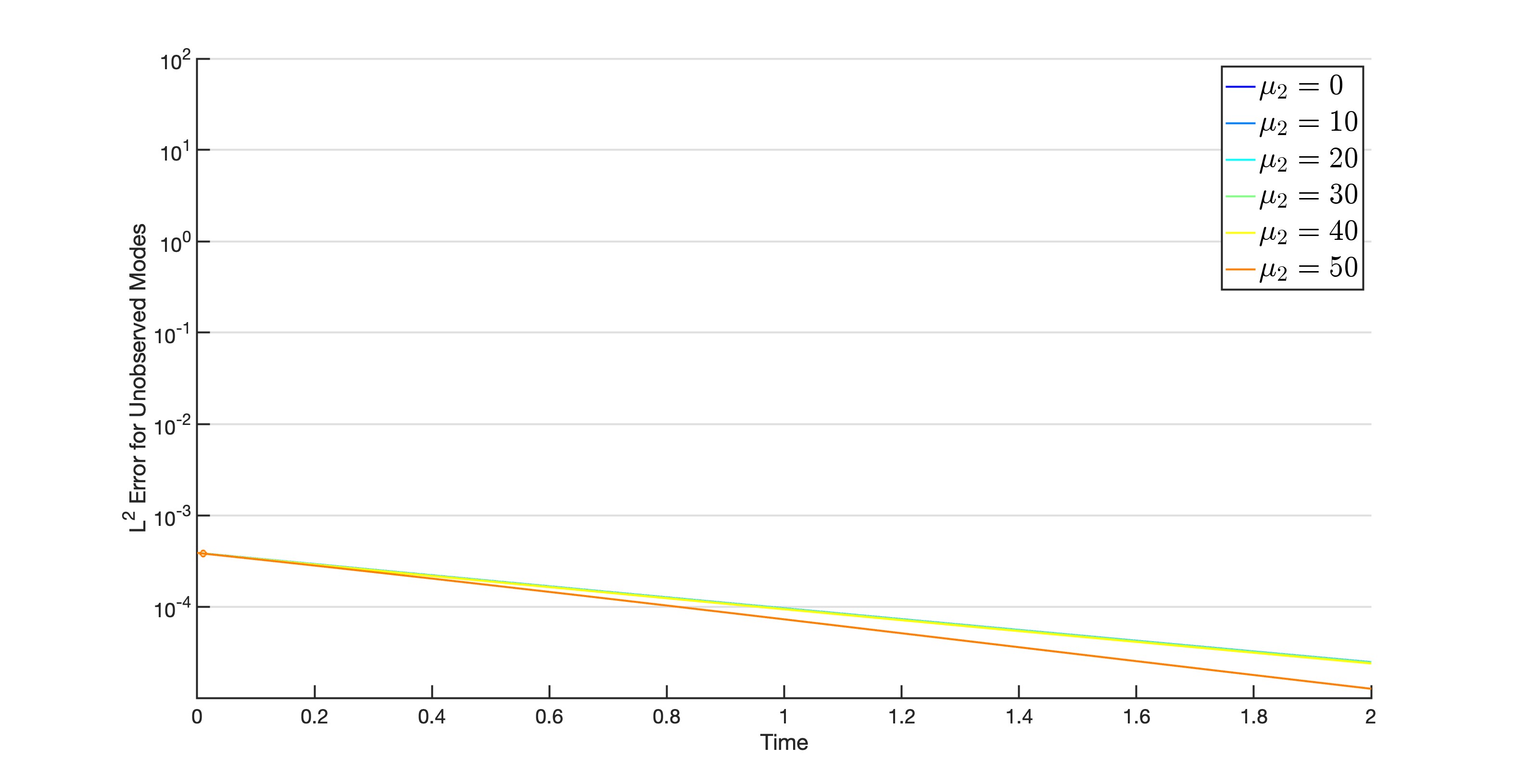}

    % \end{minipage}
    % \caption{Caption}
    \end{subfigure}
%     \begin{subfigure}[b]{.32\textwidth}
% \centering

%         % \begin{minipage}
%             \includegraphics[width=\textwidth]{}

%     % \end{minipage}
%     % \caption{Caption}
%     \end{subfigure}

\caption{Error over time for symmetric nudging intertwinements with $\mu_1 = 50$ and various non-negative values for $\mu_2$. Zoomed in plot showing initial error development for \cref{fig:symmetric nudging error}}
    \label{fig:symmetric nudging error zoomed}
\end{figure}

\subsection*{Ethics Declarations}

\subsubsection*{Competing interests} The authors declare no competing interests.

\subsubsection*{Data Availability} The datasets analyzed and the code used in their generation during the current study have been made available in the following publicly available repository: https://github.com/cvictor2/Data-Assimilation-Research/releases/tag/intertwinements.

\subsubsection*{Acknowledgments} The authors would like to thank Francesco Fosella for insightful discussions in the course of this work. E.C. was supported in part by the Department of Defense Vannevar Bush Faculty Fellowship, under
ONR award N00014-22-1-2790.  A.F. was supported in part by the National Science Foundation through DMS 2206493. V.R.M. was in part supported by the National Science Foundation through DMS 2213363, DMS 2206491, DMS 2511403, the Simons Foundation through MP-TSM 00014320, as well as the Dolciani Halloran Foundation.

\newcommand{\etalchar}[1]{$^{#1}$}
\providecommand{\bysame}{\leavevmode\hbox to3em{\hrulefill}\thinspace}
\providecommand{\MR}{\relax\ifhmode\unskip\space\fi MR }
% \MRhref is called by the amsart/book/proc definition of \MR.
\providecommand{\MRhref}[2]{%
  \href{http://www.ams.org/mathscinet-getitem?mr=#1}{#2}
}
\providecommand{\href}[2]{#2}

\vfill 

\noindent Elizabeth Carlson\\
{\footnotesize
Department of Mathematics\\
Oregon State University \\
Web: \url{https://sites.google.com/view/elizabethcarlsonmath}\\
Email: \url{carleliz@oregonstate.edu}\\
}

\noindent Aseel Farhat\\
{\footnotesize
Department of Mathematics \\
University of Virginia \\
Email: \url{af7py@virginia.edu}\\
}

\noindent Vincent R. Martinez\\
{\footnotesize
Department of Mathematics \& Statistics\\
CUNY Hunter College \\
Department of Mathematics \\
CUNY Graduate Center \\
Department of Computing \& Mathematical Sciences\\
California Institute of Technology\\
Web: \url{http://math.hunter.cuny.edu/vmartine/}\\
Email: \url{vrmartinez@hunter.cuny.edu}, \url{vrm@caltech.edu}\\
}

\noindent Collin Victor\\
{\footnotesize
Department of Mathematics \\
Texas A\&M University \\
Web: \url{https://collinvictor.me/}\\
Email: \url{collin.victor@tamu.edu}\\
}

\end{document}